\newtheorem{theorem}{\bf Theorem}[section]
\DeclareMathOperator*{\esssup}{ess\,sup}
\newcommand{\norm}[2]{\left\lVert #1\right\rVert_{#2}}
\newcommand{\md}{\partial^\bullet}
\newtheorem{defn}[theorem]{Definition}
\newtheorem{notation}[theorem]{Important Notation}
\newtheorem{lem}[theorem]{Lemma}
\newtheorem{remark}[theorem]{Remark}
\newcommand{\cts}{\hookrightarrow}
\newcommand{\compact}{\xhookrightarrow{c}}
\newcommand{\grad}{\nabla}
\newcommand{\sgrad}{\nabla_\Omega}
\newcommand{\sgradt}{\nabla_{\Omega(t)}}
\newcommand{\slap}{\Delta_\Omega}
\newcommand{\weaklyto}{\rightharpoonup}
\begin{document}

\title{A Stefan problem on an evolving surface}

\author{Amal Alphonse and Charles M. Elliott}
\affil{Mathematics Institute\\ University of Warwick\\ Coventry CV4 7AL\\ United Kingdom}

\maketitle
%%%%%%%%% Insert author address here
%\address{$^{1}$Mathematics Institute, University of Warwick, Coventry CV4 7AL, United Kingdom\\
%$^{2}$Mathematics Institute, University of Warwick, Coventry CV4 7AL, United Kingdom}

\begin{abstract}
We formulate a Stefan problem on an evolving hypersurface and study the well-posedness of weak solutions given $L^1$ data. To do this, we first develop function spaces and results to handle equations on evolving surfaces in order to give a natural treatment of the problem. Then we consider the existence of solutions for $L^\infty$ data; this is done by regularisation of the nonlinearity. The regularised problem is solved by a fixed point theorem and then uniform estimates are obtained in order to pass to the limit. By using a duality method we show continuous dependence which allows us to extend the results to $L^1$ data.% by means of an approximation argument.
\end{abstract}

\section{Introduction}
The Stefan problem is the prototypical time-dependent free boundary problem. It arises in various forms in many models in the physical and biological sciences \cite{ElliottOckendon, Friedman2, Meir, RodriguesBook2}. In this paper we present the theory of weak solutions associated with the so-called \emph{enthalpy} approach \cite{ElliottOckendon} to the Stefan problem on an evolving curved hypersurface.

Our interest is in the existence, uniqueness and continuous dependence of weak solutions to the Stefan problem
%With $f$ and $u_0$ given and $e \in \mathcal{E}(u)$, we seek solutions of 
\begin{equation}\label{eq:pde}
\begin{aligned}
\md e(t) - \Delta_{\Omega(t)} u(t) + e(t) \sgradt \cdot \mathbf{w}(t) &= f(t) &&\text{in $\Omega(t)$}\\
e(0) &= e_0&&\text{on $\Omega(0)$}\\
e &\in \mathcal{E}(u)
\end{aligned}
\end{equation}
posed on a moving compact hypersurface $\Omega(t) \subset \mathbb{R}^{n+1}$ evolving with (given) velocity field $\mathbf w$, where the energy $\mathcal{E}\colon \mathbb{R} \to \mathcal{P}(\mathbb{R})$ is defined by
\begin{align*}
\mathcal{E}(r) = \begin{cases}
r &\text{for $r < 0$}\\
[0,1] &\text{for $r =0$}\\
r+1 &\text{for $r > 0$}.
\end{cases}
\end{align*}
{Note that $\mathcal{E}$ is a maximal monotone graph in the sense of Br\'{e}zis \cite{Brezis}.} In \eqref{eq:pde}, $\md e$ means the material derivative of $e$ (which we shall also write as $\dot e$) and $\grad_{\Omega(t)}$ and $\Delta_{\Omega(t)}$ are respectively the surface gradient and Laplace--Beltrami operators on $\Omega(t)$.
The novelty of this work is that the Stefan problem itself is formulated on a moving hypersurface and our chosen method to treat this problem, which we believe is naturally suited to equations on moving domains, requires the use of some new function spaces and results that we shall introduce, building upon the spaces and concepts presented in \cite{AlpEllSti, AlpEllSti2}. %The presence of the term $u(t)\sgradt \cdot \mathbf w(t)$. We will make use of the function spaces presented in to treat this problem in a natural way.
There is, as alluded to above, a rich literature associated to Stefan-type problems \cite{Blanchard, Friedman, Kamen, Oleinik, RodriguesDarcy, RodriguesBook}. We will show that arguments similar to those used in the standard setting are also amenable to our problem on a moving hypersurface, thanks in part to the function spaces we decide to use. {Let us remark that the techniques and functional analysis we develop here can be directly applied to study many other nonlinear PDE problems posed on moving domains.} 

Let us work out a possible pointwise formulation of \eqref{eq:pde}. Start by supposing $\Omega(t) = \Omega_l(t) \cup \Omega_s(t) \cup \Gamma(t)$
where $\Omega_l(t)$ and $\Omega_s(t)$ divide $\Omega(t)$ into a liquid and a solid phase (respectively) with an \emph{a priori} unknown interface $\Gamma(t)$. The quantity of interest is the temperature $u(t)\colon \Omega(t) \to \mathbb{R}$, which we suppose satisfies
\begin{align*}
\begin{cases}
u(t) > 0 &\text{ in }\Omega_l(t)\\
u(t) = 0 &\text{ in }\Gamma(t)\\
u(t) < 0 &\text{ in }\Omega_s(t),
\end{cases}
\end{align*}
and thus $u=0$ is the critical temperature where the change of phase occurs.
Define
\begin{align*}
Q_l = \bigcup_{t \in (0,T)}\Omega_l(t) \times \{ t\},\qquad S = \bigcup_{t \in (0,T)}\Gamma(t) \times \{ t\},
\end{align*}
and $Q_s$ similarly. Given $f$ and $u_0$, we formally elucidate in Remark \ref{lem:weakAndPointwiseSolutionLemma} the relationship between \eqref{eq:pde} and the following model describing the temperature $u$:
\begin{equation}\label{eq:stefanModel}
\begin{aligned}
\md u - \slap u + (u+1)\sgrad \cdot \mathbf w &= f && \text{in $Q_l$}\\
\md u - \slap u + u\sgrad \cdot \mathbf w &= f &&\text{in $Q_s$}\\
-(\sgrad u_l - \sgrad u_s)\cdot \mu &=V &&\text{on $S$}\\
u &= 0&&\text{on $S$}\\
u(0) &= u_0 &&\text{on $\Omega(0)$},
\end{aligned}
\end{equation}
where $u_s$ denotes the trace of the restriction $u|_{\Omega_s}$ to the interface $\Gamma$ (likewise with $u_l$), $V(t)$ is the conormal velocity of $\Gamma(t)$ and $\mu(t)$ is the unit conormal vector pointing into $\Omega_l(t)$ (this vector is tangential to $\Omega(t)$ and normal to $\partial\Omega_l(t)$). %{These equations are equalities in the sense that they hold for almost every $t \in [0,T]$ and almost everywhere in $\Omega(t)$.} %The factor of $u+1$ in the divergence term in the first equation is sensible: this is consistent with conservation of mass.
%
%In Lemma \ref{lem:weakAndPointwiseSolutionLemma} below we elucidate the relationship between \eqref{eq:stefanModel} and \eqref{eq:pde}, but first 

We now introduce some notions of a weak solution, similar to \cite{Kamen}. The function spaces $L^p_X$ below will be made precise in \S \ref{sec:prelims} but for now can be thought of as generalisations of Bochner spaces $L^p(0,T;X_0)$ where now $u \in L^p_X$ implies $u(t) \in X(t)$ for almost all $t$ (for a suitable family $\{X(t)\}_{t \in [0,T]}$).
%By a weak solution, we mean the following.
\begin{defn}[Weak solution]Given $f \in L^1_{L^1}$ and $e_0 \in L^1(\Omega_0)$, a \emph{weak solution} of \eqref{eq:pde} is a pair $(u,e) \in L^1_{L^1} \times L^1_{L^1}$ such that $e \in \mathcal{E}(u)$ and 
%\begin{itemize}
%\item[(i)]$u \in L^1_{L^1}$ and $e \in L^1_{L^1}$
%\item[(i)]% and $e_0 \in \mathcal{E}(u_0)$
%\item[(ii)]
there holds
\begin{align*}
-\int_0^T \int_{\Omega(t)}\dot \eta(t)e(t) - \int_0^T \int_{\Omega(t)}u(t) \slap \eta(t) &= \int_0^T \int_{\Omega(t)}f(t)\eta(t) + \int_{\Omega_0}e_0\eta(0)%\label{eq:pdeWeakFormL1}
\end{align*}
for all $\eta \in W(L^\infty \cap H^2, L^\infty)$ with $\slap \eta \in L^\infty_{L^\infty}$ and $\eta(T) = 0$.
%\end{itemize}
%
%A pair $(u,e) \in L^1_{L^1} \times L^1_{L^1}$ is weak solution of class $L^1$ of \eqref{eq:pde} if
%\begin{equation}\label{eq:pdeWeakFormL1}
%\begin{aligned}
%%u &\in L^2_{H^1}\\
%e &\in \mathcal{E}(u),\\
%e_0 &\in \mathcal{E}(u_0),\\
%-\int_0^T \int_{\Omega(t)}\dot \eta(t)e(t) - \int_0^T \int_{\Omega(t)}u(t) \slap \eta(t) &= \int_0^T \int_{\Omega(t)}f(t)\eta(t) + \int_{\Omega_0}e_0\eta(0).
%\end{aligned}
%\end{equation}
%for all $\eta \in W(L^\infty \cap H^2, L^\infty)$ with $\slap \eta \in L^\infty_{L^\infty}$ and $\eta(T) = 0$.
\end{defn}
%In order to carry out the well-posedness proof, we will use the following stronger notion of a weak solution too.
\begin{defn}[Bounded weak solution]\label{defn:boundedWeakSolution}
Given $f \in L^\infty_{L^\infty}$ and $e_0 \in L^\infty(\Omega_0)$, a \emph{bounded weak solution} of \eqref{eq:pde} is a pair $(u,e) \in L^2_{H^1}\times L^\infty_{L^\infty}$ such that $(u,e)$ is a weak solution of \eqref{eq:pde} satisfying
%A \emph{bounded weak solution} of \eqref{eq:pde} is a pair $(u,e) \in L^2_{H^1}\times L^2_{L^2}\cap L^\infty_{L^\infty}$ with $\dot e \in L^2_{H^{-1}}$ such that
%\begin{itemize}
%%\item[(i)] $u \in L^2_{H^1}$ and $e \in L^2_{L^2}\cap L^\infty_{L^\infty}$ with $\dot e \in L^2_{H^{-1}}$
%\item[(i)]$e \in \mathcal{E}(u)$% and $e_0 \in \mathcal{E}(u_0)$
%\item[(ii)]there holds
\begin{equation}\label{eq:pdeWeakForm}
-\int_0^T \int_{\Omega(t)}\dot \eta(t)e(t) + \int_0^T \int_{\Omega(t)}\sgrad u(t) \sgrad \eta(t) = \int_0^T \int_{\Omega(t)}f(t)\eta(t) + \int_{\Omega_0}e_0\eta(0)
%\int_0^T \langle \dot e(t),\eta(t) \rangle_{\Vmt, \Vt} + \int_0^T \int_{\Omega(t)}\sgrad u(t) \sgrad \eta(t) + \int_0^T \int_{\Omega(t)}e(t)\eta(t)\sgrad \cdot \mathbf w = \int_0^T \int_{\Omega(t)}f(t)\eta(t)
\end{equation}
for all $\eta \in W(H^1, L^2)$ with $\eta(T) = 0$.
%\end{itemize}
%
%A pair $(u,e) \in L^\infty_{L^\infty}\cap L^2_{H^1} \times L^2_{L^2}$ is weak solution of class $L^\infty$ of \eqref{eq:pde} if
%\begin{equation}\label{eq:pdeWeakForm}
%\begin{aligned}
%%u &\in L^2_{H^1}\\
%\dot e &\in L^2_{H^{-1}}\\
%e &\in \mathcal{E}(u),\\
%e_0 &\in \mathcal{E}(u_0),\\
%e(0) &= e_0,\\
%\int_0^T \langle \dot e(t),\eta(t) \rangle_{\Vmt, \Vt} + \int_0^T \int_{\Omega(t)}\sgrad u(t) \sgrad \eta(t) + \int_0^T \int_{\Omega(t)}e(t)\eta(t)\sgrad \cdot \mathbf w &= \int_0^T \int_{\Omega(t)}f(t)\eta(t)\end{aligned}
%\end{equation}
%for all $\eta \in W(H^1, L^2)$ with $\eta(T) = 0$.
\end{defn}
%
%We delay the proof of this lemma until p.~\pageref{proof:weakAndPointwiseSolutionLemma} in \S \ref{sec:preliminaryResults}.
%The link between the bounded weak solution and the classical solution is elucidated by Lemma \ref{lem:weakAndPointwiseSolutionLemma} on page \pageref{lem:weakAndPointwiseSolutionLemma}.
%\subsection{Main results}
We prove the following results.
\begin{theorem}[Existence of bounded weak solutions]\label{thm:existenceLinfty}
If $f \in L^\infty_{L^\infty}$, $e_0 \in L^\infty(\Omega_0)$ and $|\Omega| := \sup_{s \in [0,T]}|\Omega(s)| < \infty$, then there exists a bounded weak solution to \eqref{eq:pde}.
\end{theorem}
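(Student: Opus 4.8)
The plan is to follow the classical regularisation strategy for the Stefan problem, adapted to the evolving-surface function spaces. First I would approximate the maximal monotone graph $\mathcal{E}$ by a family of smooth, Lipschitz, strictly increasing functions $\mathcal{E}_\lambda\colon\mathbb{R}\to\mathbb{R}$ (for instance a Yosida-type regularisation of $\mathcal{E}^{-1}$, so that $\mathcal{E}_\lambda^{-1}$ is Lipschitz with constant bounded below), with $\mathcal{E}_\lambda(0)=0$ and $\mathcal{E}_\lambda\to\mathcal{E}$ in the graph sense. Writing $e=\mathcal{E}_\lambda(u)$, this turns \eqref{eq:pde} into a quasilinear but non-degenerate parabolic equation $\md(\mathcal{E}_\lambda(u)) - \slap u + \mathcal{E}_\lambda(u)\,\sgradt\cdot\mathbf w = f$ on $\Omega(t)$ with $\mathcal{E}_\lambda(u(0))=e_0$ (using a truncation/approximation $e_0^\lambda$ of $e_0$ if needed so that $e_0^\lambda$ lies in the range of $\mathcal{E}_\lambda$).

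Next I would solve this regularised problem. I expect to set this up as a fixed point problem: given $v\in L^2_{L^2}$, let $\beta=\mathcal{E}_\lambda^{-1}$ and solve the linear-in-the-leading-term evolution problem for $u$ with the transport term and $\md(\mathcal{E}_\lambda(v))$ treated as data, or more robustly, use the abstract theory for parabolic equations on evolving surfaces from \cite{AlpEllSti, AlpEllSti2} applied to the monotone operator $u\mapsto -\slap u$ together with the Lipschitz nonlinearity; a Schauder or Banach fixed point argument in $W(H^1,H^{-1})$ (or the evolving-surface analogue) then yields a solution $u_\lambda\in W(H^1,\ldots)$, with $e_\lambda=\mathcal{E}_\lambda(u_\lambda)$. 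One should also record an $L^\infty$ bound: testing with truncations and using that $f\in L^\infty_{L^\infty}$, $e_0\in L^\infty(\Omega_0)$, plus the boundedness of $\sgradt\cdot\mathbf w$ and a Gronwall argument (the volume bound $|\Omega|<\infty$ enters here), gives $\|u_\lambda\|_{L^\infty_{L^\infty}}$ and $\|e_\lambda\|_{L^\infty_{L^\infty}}$ bounded independently of $\lambda$.

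Then I would derive the uniform (in $\lambda$) a priori estimates needed to pass to the limit. Testing the regularised equation with $u_\lambda$ and integrating, using the transport formula for $\frac{d}{dt}\int_{\Omega(t)}$, the identity $\int_{\Omega(t)}\md(\mathcal{E}_\lambda(u_\lambda))u_\lambda = \frac{d}{dt}\int_{\Omega(t)}\Psi_\lambda(u_\lambda) - \int_{\Omega(t)}\Psi_\lambda(u_\lambda)\sgradt\cdot\mathbf w$ where $\Psi_\lambda'(r)=r\mathcal{E}_\lambda'(r)$, together with the $L^\infty$ bound just obtained, yields a uniform bound on $u_\lambda$ in $L^2_{H^1}$. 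From the equation itself one then bounds $\md e_\lambda$ in the dual space $L^2_{H^{-1}}$ (or $W(H^1,H^{-1})^*$-type space). These bounds give, along a subsequence, $u_\lambda\weaklyto u$ in $L^2_{H^1}$, $e_\lambda\weaklyto^* e$ in $L^\infty_{L^\infty}$ (hence also weakly in $L^2_{L^2}$), and $\md e_\lambda\weaklyto \md e$ in the dual space.

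The main obstacle — as always in the Stefan problem — is identifying the weak limit: showing that $e\in\mathcal{E}(u)$ a.e. This is where I expect to spend most effort. The standard route is a compactness/monotonicity argument: one needs strong convergence of $e_\lambda$ (or of $u_\lambda$) in $L^2_{L^2}$ to pass to the limit in the nonlinear relation $e_\lambda=\mathcal{E}_\lambda(u_\lambda)$. On an evolving surface I would obtain this via an Aubin–Lions–Simon-type compactness theorem for the evolving function spaces (which should be available from \cite{AlpEllSti, AlpEllSti2}, pulling back to the reference surface $\Omega_0$): the bound on $e_\lambda$ in some $L^2_X$ plus the bound on $\md e_\lambda$ in $L^2_{X^*}$, with $X\compact Y$, gives $e_\lambda\to e$ strongly in $L^2_Y$. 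Combined with $u_\lambda\weaklyto u$ in $L^2_{H^1}$ and the monotonicity of the graphs $\mathcal{E}_\lambda$ — using the inequality $\int\int(e_\lambda - \tilde e)(u_\lambda - \tilde u)\ge 0$ for all $(\tilde u,\tilde e)$ with $\tilde e\in\mathcal{E}_\lambda(\tilde u)$, passing to the limit via a Minty-type argument and the convergence $\mathcal{E}_\lambda\to\mathcal{E}$ — one concludes $e\in\mathcal{E}(u)$ a.e. Finally, passing to the limit in the weak formulation \eqref{eq:pdeWeakForm} (the linear terms are straightforward given the weak convergences, after integrating by parts in time to move $\md$ onto the test function $\eta$, and handling the transport term $\int\int \sgrad u_\lambda\sgrad\eta$ using $u_\lambda\weaklyto u$ in $L^2_{H^1}$) and checking the initial condition is recovered (again via integration by parts in time and $e_0^\lambda\to e_0$), shows that $(u,e)$ is a bounded weak solution.
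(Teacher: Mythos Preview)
Your overall architecture---regularise $\mathcal{E}$, solve the non-degenerate problem by a Schauder-type fixed point, derive $L^\infty$ and energy bounds, then pass to the limit via compactness and Minty---is exactly the paper's. The gap is in your compactness step. You propose Aubin--Lions on $e_\lambda$, writing ``bound on $e_\lambda$ in some $L^2_X$ plus bound on $\md e_\lambda$ in $L^2_{X^*}$, with $X\compact Y$'', aiming for the strong $L^2_{L^2}$ convergence you say is needed. But the only uniform bounds on $e_\lambda=\mathcal{E}_\lambda(u_\lambda)$ are in $L^\infty_{L^\infty}$: since $\mathcal{E}_\lambda'$ blows up like $1/\lambda$, the relation $\sgrad e_\lambda = \mathcal{E}_\lambda'(u_\lambda)\sgrad u_\lambda$ gives no uniform $H^1$ control on $e_\lambda$. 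There is therefore no space $X$ with $X\compact L^2$ in which $e_\lambda$ is uniformly bounded, and Aubin--Lions as you have set it up cannot deliver strong $L^2_{L^2}$ convergence of $e_\lambda$.

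The paper resolves this by obtaining strong convergence of $u_\epsilon$, not $e_\epsilon$. There is no direct bound on $\md u_\epsilon$ either, so standard Aubin--Lions on $u_\epsilon$ also fails; instead the paper proves a uniform translation-in-time estimate $\int_0^{T-h}\int_{\Omega_0}|\tilde u_\epsilon(t+h)-\tilde u_\epsilon(t)|\to 0$ as $h\to 0$, extracted from the $L^2_{H^{-1}}$ bound on $\md e_\epsilon$ together with the uniform equicontinuity of the inverses $\mathcal{U}_\epsilon$. Combined with the uniform $L^2_{H^1}$ bound on $u_\epsilon$, Simon's compactness theorem then yields $u_\epsilon\to u$ strongly in $L^1_{L^1}$ (upgraded to all $L^p_{L^q}$ via the $L^\infty$ bound and dominated convergence), after which $e_\epsilon\weaklyto e$ in $L^2_{L^2}$ suffices for Minty. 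Your route can in fact be repaired---Aubin--Lions with $L^2\compact H^{-1}$ does give $e_\lambda\to e$ strongly in $L^2_{H^{-1}}$, and paired with $u_\lambda\weaklyto u$ in $L^2_{H^1}$ the cross term $\int\!\int e_\lambda u_\lambda$ still passes to the limit---but that is not what you wrote, and the translation-estimate lemma is the concrete idea you are missing.
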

\begin{theorem}[Uniqueness and continuous dependence of bounded weak solutions]\label{thm:uniquenessAndCtsDependenceLinfty}
If for $i=1$, $2$, $(u^i, e^i)$ are two bounded weak solutions of \eqref{eq:pde} with data $(f^i, e^i_0) \in L^\infty_{L^\infty}\times L^\infty(\Omega_0)$, then%the continuous dependence result
\[\lVert{e^1(t)- e^2(t)}\rVert_{L^1(\Omega(t))} \leq \int_0^t\lVert{f^1(\tau)-f^2(\tau)}\rVert_{L^1(\Omega(\tau))} + \lVert{e^1_{0}-e^2_{0}}\rVert_{L^1(\Omega_0)}\]
for almost all $t$.
%holds.
\end{theorem}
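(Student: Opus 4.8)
The plan is to use a duality argument, adapting to the evolving-surface setting the classical method behind uniqueness for Stefan-type problems (cf.\ \cite{Oleinik,Kamen}). Write $\tilde e = e^1-e^2$, $\tilde u = u^1-u^2$, $\tilde f = f^1-f^2$ and $\tilde e_0 = e^1_0-e^2_0$. Since $e\in\mathcal E(u)$ is equivalent to $u = \beta(e)$, where $\beta:=\mathcal E^{-1}$ is the nondecreasing, $1$-Lipschitz function given by $\beta(r)=r$ for $r\le 0$, $\beta(r)=0$ for $0\le r\le 1$ and $\beta(r)=r-1$ for $r\ge 1$, I would define a measurable coefficient $a$ by $a = (\beta(e^1)-\beta(e^2))/(e^1-e^2)$ on $\{\tilde e\ne 0\}$ and $a=0$ elsewhere; then $0\le a\le 1$ and $\tilde u = a\,\tilde e$ a.e. Subtracting the two weak formulations \eqref{eq:pdeWeakForm} and using the integration-by-parts identity up to an intermediate time $t_0$ (a standard consequence of the theory of bounded weak solutions, producing the boundary term $\int_{\Omega(t_0)}\tilde e(t_0)\eta(t_0)$), one obtains, for a.a.\ $t_0\in(0,T)$ and all sufficiently regular test functions $\eta$ on $(0,t_0)$,
\[
-\int_0^{t_0}\!\int_{\Omega(t)}\bigl(\md\eta + a\,\slap\eta\bigr)\tilde e + \int_{\Omega(t_0)}\tilde e(t_0)\,\eta(t_0) = \int_0^{t_0}\!\int_{\Omega(t)}\tilde f\,\eta + \int_{\Omega_0}\tilde e_0\,\eta(0).
\]

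Fix $t_0$ and a smooth $g$ on $\Omega(t_0)$ with $\norm{g}{L^\infty(\Omega(t_0))}\le 1$, and for $\varepsilon>0$ let $a_\varepsilon$ be a smooth approximation of $\max(a,\varepsilon)$ taking values in $[\varepsilon/2,1]$ and satisfying $|a-a_\varepsilon|^2/a_\varepsilon\le C\varepsilon$. Using the linear parabolic theory on evolving surfaces together with parabolic regularity (so that $\eta_\varepsilon$ is an admissible test function), solve the uniformly parabolic backward adjoint problem $\md\eta_\varepsilon + a_\varepsilon\,\slap\eta_\varepsilon = 0$ on $(0,t_0)$ with $\eta_\varepsilon(t_0)=g$, and substitute $\eta = \eta_\varepsilon$ above. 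Since $\md\eta_\varepsilon + a\,\slap\eta_\varepsilon = (a-a_\varepsilon)\slap\eta_\varepsilon$, this gives
\[
\int_{\Omega(t_0)}\tilde e(t_0)\,g = \int_0^{t_0}\!\int_{\Omega(t)}\tilde f\,\eta_\varepsilon + \int_{\Omega_0}\tilde e_0\,\eta_\varepsilon(0) + \int_0^{t_0}\!\int_{\Omega(t)}(a-a_\varepsilon)\,\slap\eta_\varepsilon\,\tilde e.
\]

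The key are two $\varepsilon$-uniform bounds on $\eta_\varepsilon$. First, the adjoint equation has no zeroth-order term (the $\sgrad\!\cdot\!\mathbf w$ contributions cancel under the transport theorem), so the maximum principle yields $\norm{\eta_\varepsilon}{L^\infty_{L^\infty}}\le\norm{g}{L^\infty(\Omega(t_0))}\le 1$, whence $\bigl|\int_0^{t_0}\int_{\Omega(t)}\tilde f\eta_\varepsilon + \int_{\Omega_0}\tilde e_0\eta_\varepsilon(0)\bigr|\le\int_0^{t_0}\norm{\tilde f(t)}{L^1(\Omega(t))}\,dt + \norm{\tilde e_0}{L^1(\Omega_0)}$. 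Second, testing the adjoint equation with $\slap\eta_\varepsilon$, using the transport identities of \cite{AlpEllSti} for $\md\sgrad$ and a Gr\"onwall argument to absorb the lower-order moving-frame terms, I expect $\int_0^{t_0}\int_{\Omega(t)}a_\varepsilon|\slap\eta_\varepsilon|^2\le C(T,\mathbf w)\norm{\sgrad g}{L^2(\Omega(t_0))}^2$, uniformly in $\varepsilon$. Combining this with $\norm{\tilde e}{L^\infty_{L^\infty}}<\infty$, the bound $|a-a_\varepsilon|^2/a_\varepsilon\le C\varepsilon$, Cauchy--Schwarz and $|\Omega|<\infty$ controls the last term by $C\,\norm{\tilde e}{L^\infty_{L^\infty}}\sqrt{\varepsilon|\Omega|T}\,\norm{\sgrad g}{L^2(\Omega(t_0))}\to 0$ as $\varepsilon\to 0$ with $g$ fixed. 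Letting $\varepsilon\to 0$ gives $\int_{\Omega(t_0)}\tilde e(t_0)g\le\int_0^{t_0}\norm{\tilde f(t)}{L^1(\Omega(t))}\,dt + \norm{\tilde e_0}{L^1(\Omega_0)}$ for every such $g$, and taking the supremum over smooth $g$ with $\norm{g}{L^\infty(\Omega(t_0))}\le 1$ produces $\norm{\tilde e(t_0)}{L^1(\Omega(t_0))}$ on the left, which is the claim; uniqueness, and continuous dependence of $u$ via $\tilde u = a\tilde e$, follow.

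The main obstacles I anticipate are (i) solving the regularised backward parabolic adjoint problem on the non-autonomous surface with enough regularity that $\eta_\varepsilon$ — and the time-cutoff needed for the ``up to $t_0$'' identity — genuinely belong to the admissible test space, and (ii) the $\varepsilon$-uniform estimate on $\int_0^{t_0}\int_{\Omega(t)} a_\varepsilon|\slap\eta_\varepsilon|^2$, where the evolving geometry produces lower-order commutator terms that must be absorbed; these are precisely the points at which the evolving-surface function spaces and transport formulae developed in the paper do the real work.
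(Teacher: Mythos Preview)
Your proposal is correct and follows essentially the same duality argument as the paper: define the degenerate coefficient $a=\tilde u/\tilde e$, regularise to $a_\varepsilon$ bounded below, solve the backward adjoint problem $\md\eta_\varepsilon + a_\varepsilon\slap\eta_\varepsilon=0$ with smooth terminal data, use the $L^\infty$ bound on $\eta_\varepsilon$ and the $\varepsilon$-uniform bound on $\int a_\varepsilon|\slap\eta_\varepsilon|^2$ to send $\varepsilon\to 0$, then approximate $\operatorname{sign}(\tilde e(t_0))$ by smooth data. The paper's only organisational difference is that it isolates the two anticipated obstacles~(i) and~(ii) into a preliminary lemma (Lemma~\ref{lem:auxiliaryPDE}, with the density argument for the energy estimate in Lemma~\ref{lem:density}) and uses the slightly different regularisation $a_\epsilon+\epsilon$ with $\lVert a_\epsilon-a\rVert_{L^2}\le\epsilon$ rather than your $\max(a,\varepsilon)$, but the mechanics are the same.
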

%Our main result is the following.
\begin{theorem}[Well-posedness of weak solutions]\label{thm:wellPosednessL1}If $f \in L^1_{L^1}$, $e_0 \in L^1(\Omega_0)$ and $|\Omega| := \sup_{s \in [0,T]}|\Omega(s)| < \infty$, then there exists a unique weak solution to \eqref{eq:pde}. Furthermore, if for $i=1$, $2$, $(u^i, e^i) \in L^1_{L^1}\times L^1_{L^1}$ are two weak solutions of \eqref{eq:pde} with data $(f^i, e^i_0) \in L^1_{L^1}\times L^1(\Omega_0)$, then %the continuous dependence result
\[\lVert{e^1- e^2}\rVert_{L^1_{L^1}} \leq C_T\left(\lVert{f^1-f^2}\rVert_{L^1_{L^1}} + \lVert{e^1_{0}-e^2_{0}}\rVert_{L^1(\Omega_0)}\right).\]
%holds.
\end{theorem}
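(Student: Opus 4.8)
The plan is to obtain the $L^1$ theory as a limit of the bounded weak solution theory from Theorems \ref{thm:existenceLinfty} and \ref{thm:uniquenessAndCtsDependenceLinfty}, using the continuous dependence estimate to upgrade convergence. First I would fix data $f \in L^1_{L^1}$ and $e_0 \in L^1(\Omega_0)$ and choose truncations $f^k = \max(-k,\min(k,f))$ and $e_0^k$ defined analogously, so that $f^k \to f$ in $L^1_{L^1}$ and $e_0^k \to e_0$ in $L^1(\Omega_0)$, with $(f^k, e_0^k) \in L^\infty_{L^\infty}\times L^\infty(\Omega_0)$. By Theorem \ref{thm:existenceLinfty} there is a bounded weak solution $(u^k, e^k)$ for each $k$. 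Applying the continuous dependence estimate of Theorem \ref{thm:uniquenessAndCtsDependenceLinfty} to the pair of indices $k, m$ gives
\[
\norm{e^k(t)-e^m(t)}{L^1(\Omega(t))} \le \int_0^t \norm{f^k(\tau)-f^m(\tau)}{L^1(\Omega(\tau))}\,d\tau + \norm{e_0^k - e_0^m}{L^1(\Omega_0)}
\]
for a.e.\ $t$, so $(e^k)$ is Cauchy in $L^1_{L^1}$ (in fact in $C([0,T];L^1)$ after integrating in $t$, or by $\esssup$); let $e = \lim_k e^k$ in $L^1_{L^1}$.

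Next I must recover the temperature $u$ and verify $e \in \mathcal{E}(u)$ in the limit. Since $\mathcal{E}$ is a maximal monotone graph, its inverse $\mathcal{E}^{-1}$ is the single-valued (Lipschitz, indeed $1$-Lipschitz after shifting) function $\beta$ given by $\beta(s)=s$ for $s\le 0$, $\beta(s)=0$ for $0\le s\le 1$, $\beta(s)=s-1$ for $s\ge 1$; thus $u^k = \beta(e^k)$ pointwise a.e. Because $\beta$ is $1$-Lipschitz, $\norm{u^k - u^m}{L^1_{L^1}} \le \norm{e^k - e^m}{L^1_{L^1}} \to 0$, so $u^k \to u := \beta(e)$ in $L^1_{L^1}$, and $e \in \mathcal{E}(u)$ holds a.e.\ because the graph of $\mathcal{E}$ is closed and $u = \beta(e)$. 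It remains to pass to the limit in the weak formulation: for each test function $\eta \in W(L^\infty\cap H^2, L^\infty)$ with $\slap\eta \in L^\infty_{L^\infty}$ and $\eta(T)=0$, we have
\[
-\int_0^T\!\!\int_{\Omega(t)}\dot\eta\, e^k - \int_0^T\!\!\int_{\Omega(t)} u^k\, \slap\eta = \int_0^T\!\!\int_{\Omega(t)} f^k \eta + \int_{\Omega_0} e_0^k \eta(0),
\]
and since $\dot\eta, \slap\eta \in L^\infty_{L^\infty}$ and $\eta(0)\in L^\infty(\Omega_0)$, each term converges to the corresponding term with $(u,e,f,e_0)$ by the $L^1$ convergences established; this shows $(u,e)$ is a weak solution. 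Uniqueness of the weak solution and the asserted continuous dependence bound $\norm{e^1-e^2}{L^1_{L^1}} \le C_T(\norm{f^1-f^2}{L^1_{L^1}} + \norm{e_0^1-e_0^2}{L^1(\Omega_0)})$ should follow by a duality argument: given two weak solutions with data $(f^i,e_0^i)$, test the difference of the weak formulations against the solution $\eta$ of a suitable adjoint (backward) problem $-\dot\eta - \slap\eta - \eta\,\sgradt\cdot\mathbf w = g$ with $\eta(T)=0$ and $g$ chosen to extract $\norm{e^1-e^2}{L^1_{L^1}}$ (e.g.\ $g = \sign(e^1-e^2)$, approximated by smooth functions), using the regularity theory for this linear parabolic problem on the evolving surface to control $\norm{\dot\eta}{L^\infty_{L^\infty}}$, $\norm{\slap\eta}{L^\infty_{L^\infty}}$ and $\norm{\eta(0)}{L^\infty}$ in terms of $\norm{g}{L^\infty_{L^\infty}} \le 1$; integrating in time then yields the estimate with $C_T$ depending on $T$ and $|\Omega|$.

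The main obstacle is the duality argument for uniqueness and continuous dependence: one needs an adjoint problem whose solution has enough regularity to serve as an admissible test function (in particular $\slap\eta \in L^\infty_{L^\infty}$ and $\dot\eta \in L^\infty_{L^\infty}$), which requires $L^\infty$-type maximal regularity estimates for the linear Laplace--Beltrami equation with lower-order drift term $e\,\sgradt\cdot\mathbf w$ on the evolving surface — the drift coefficient $\sgradt\cdot\mathbf w$ must be bounded, which is guaranteed by the smoothness assumptions on the flow. One subtlety is that the test space in the definition of weak solution already anticipates exactly this: the adjoint solution must land in $W(L^\infty\cap H^2, L^\infty)$ with bounded Laplacian, so the duality method is consistent with the definition by design. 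A secondary technical point is handling the non-uniqueness of $u$ on the set $\{e \in (0,1)\}$ where $u = 0$: this is automatically resolved because $u$ is pinned down as $\beta(e)$, and the weak formulation only ever sees $u$ linearly, so the continuous dependence for $e$ (not $u$) is the natural quantity, exactly as stated. Finally, I would double-check that the limiting argument produces a genuine weak solution in the sense of the first definition — i.e.\ that no bounded weak solution structure ($u \in L^2_{H^1}$) is needed in the limit — which is fine since the $L^1$ weak formulation only requires $u \in L^1_{L^1}$.
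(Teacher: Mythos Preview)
Your existence argument is essentially the same as the paper's: approximate the $L^1$ data by bounded data, invoke Theorem~\ref{thm:existenceLinfty}, use Theorem~\ref{thm:uniquenessAndCtsDependenceLinfty} to get a Cauchy sequence $(e^k)$ in $L^1_{L^1}$, recover $u=\mathcal U(e)$ via the $1$-Lipschitz inverse, and pass to the limit against test functions with $\dot\eta,\slap\eta\in L^\infty_{L^\infty}$. That part is fine.

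The gap is in uniqueness and continuous dependence. Your proposed adjoint problem $-\dot\eta-\slap\eta-\eta\,\sgrad\cdot\mathbf w=g$ does not match the structure of the weak formulation: when you subtract the two weak identities you obtain
\[
-\int_0^T\!\!\int_{\Omega(t)}\dot\eta\,(e^1-e^2)-\int_0^T\!\!\int_{\Omega(t)}(u^1-u^2)\,\slap\eta=\text{RHS},
\]
and the two unknowns $e^1-e^2$ and $u^1-u^2$ are \emph{different} functions, linked only by the nonlinear graph $\mathcal E$. Substituting your adjoint equation leaves an uncontrolled residual $\int\!\!\int\slap\eta\,[(e^1-e^2)-(u^1-u^2)]$. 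The correct adjoint (cf.\ the proof of Theorem~\ref{thm:uniquenessAndCtsDependenceLinfty}) carries the coefficient $a=(u^1-u^2)/(e^1-e^2)$ in front of $\slap\eta$, and even then the error terms coming from the regularisation $a_\epsilon$ are estimated in the paper with a factor $\norm{e^1-e^2}{L^\infty_{L^\infty}}$ --- unavailable for merely $L^1$ solutions. Your requested $L^\infty$ maximal regularity ($\slap\eta,\dot\eta\in L^\infty_{L^\infty}$) is also far stronger than what Lemma~\ref{lem:auxiliaryPDE} delivers.

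The paper sidesteps all of this: for uniqueness and continuous dependence in $L^1$ it again approximates \emph{both} data pairs $(f^i,e_0^i)$ by bounded data $(f_n^i,e_{0n}^i)$, applies the already-proved $L^\infty$ estimate of Theorem~\ref{thm:uniquenessAndCtsDependenceLinfty} to the corresponding bounded weak solutions $e_n^i$, and then lets $n\to\infty$, using $e_n^i\to e^i$ in $L^1_{L^1}$ (which your existence step already gives). No new duality argument is needed at the $L^1$ level.
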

%The proof of this theorem utilises the following results.
Below, we shall use the notation $\cts$ and $\compact$ to denote (respectively) a continuous embedding and a compact embedding. 
\section{Preliminaries}\label{sec:prelims}
\subsection{Abstract evolving function spaces}\label{sec:absSpaces}
In \cite{AlpEllSti}, we generalised some concepts from \cite{vierling} and defined the Hilbert space $L^2_{H}$ given a sufficiently smooth parametrised family of Hilbert spaces $\{H(t)\}_{t \in [0,T]}$. We need a generalisation of this theory to Banach spaces.

For each $t \in [0,T]$, let $X(t)$ be a real Banach space with $X_0 := X(0)$. We informally identify the family $\{X(t)\}_{t \in [0,T]}$ with the symbol $X$. Let there be a linear homeomorphism $\phi_t\colon X_0 \to X(t)$ for each $t \in [0,T]$ (with the inverse $\phi_{-t}\colon X(t) \to X_0$) such that $\phi_0$ is the identity. We assume that there exists a constant $C_X$ independent of $t \in [0,T]$ such that
\begin{equation}\label{eq:assOnEvolvingSpaces}
\begin{aligned}
\norm{\phi_t u}{X(t)} &\leq C_X\norm{u}{X_0} &&\forall u \in X_0\\
\norm{\phi_{-t} u}{X_0} &\leq C_X\norm{u}{X(t)} &&\forall u \in X(t).
\end{aligned}
\end{equation}
We assume for all $u \in X_0$ that the map $t \mapsto \norm{\phi_t u}{X(t)}$ is measurable.
%Now we can define the space $L^p_{X}$.
\begin{defn}
Define the Banach spaces
\begin{align*}
L^p_X &= \{u:[0,T] \to \bigcup_{t \in [0,T]}\!\!\!\! X(t) \times \{t\},\quad t \mapsto (\hat u(t), t)\quad \mid \quad\phi_{-(\cdot)} \hat u(\cdot) \in L^p(0,T;X_0 )\}\quad\text{for $p \in [1,\infty)$}\\
L^\infty_{X} &= \{ u \in L^2_{X} \mid \esssup_{t \in [0,T]} \norm{u(t)}{X(t)} < \infty\}
\end{align*}
%for $p \in [1, \infty)$ and for $p=\infty$, define
%\[L^\infty_{X} = \{ u \in L^2_{X} \mid \esssup_{t \in [0,T]} \norm{u(t)}{X(t)} < \infty\}.\]
endowed with the norm
\begin{equation}
\begin{aligned}\label{eq:ip}
\norm{u}{L^p_X} &=\begin{cases}
\left({\int_0^T \norm{u(t)}{X(t)}^p}\right)^{\frac 1 p} &\text{for $p \in [1,\infty)$}\\
\esssup_{t \in [0,T]}\norm{u(t)}{X(t)} &\text{for $p=\infty$}.
\end{cases}
\end{aligned}
\end{equation}
\end{defn}
Note that we made an abuse of notation after the definition of the first space and identified $u(t) = (\hat u(t), t)$ with $\hat u(t)$.
%In other words, $u \in L^\infty_{X}$ if $u \in L^2_{X}$ and for almost all $t \in [0,T]$, $\norm{u(t)}{X(t)} \leq C$.
That \eqref{eq:ip} defines a norm is easy to see once one checks that the integrals are well-defined (the case $p=\infty$ is easy), which can be shown by a straightforward adaptation of the proof of Theorem 2.8 in \cite{AlpEllSti} for the case when each $X(t)$ is separable ({see the appendix}) and the proof of Lemma 3.5 in \cite{vierling} for the non-separable case. The fact that $L^p_X$ is a Banach space follows from Lemma \ref{lem:pushforwardIso} below.% and the completeness of $L^p(0,T;X_0)$. %We relegate the proof for the case $p \in [1,\infty)$ to the appendix because it is comparable to the proof of Theorem 2.7 in \cite{AlpEllSti}.
\begin{notation}Given a function $u \in L^p_X$, the notation $\tilde u$ will be used to mean the pullback $\tilde u(\cdot) := \phi_{-(\cdot)}u(\cdot) \in L^p(0,T;X_0)$, and vice-versa.
\end{notation}
\begin{lem}\label{lem:pushforwardIso}The spaces $L^p(0,T;X_0)$ and $L^p_X$ are isomorphic via $\phi_{(\cdot)}$ with an equivalence of norms:
\begin{equation*}
\begin{aligned}[2]
\frac{1}{C_X}\norm{u}{L^p_X} &\leq \norm{\phi_{-(\cdot)}u(\cdot)}{L^p(0,T;X_0)} \leq C_X\norm{u}{L^p_X}&&\text{for all $u \in L^p_X$}.
\end{aligned}
\end{equation*}
\end{lem}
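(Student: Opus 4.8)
The plan is to exploit the defining property of the spaces $L^p_X$ directly: by construction, $u \in L^p_X$ if and only if its pullback $\tilde u = \phi_{-(\cdot)} u(\cdot)$ lies in $L^p(0,T;X_0)$, so the map $u \mapsto \tilde u$ is a well-defined linear bijection between $L^p_X$ and $L^p(0,T;X_0)$, with inverse $v \mapsto \phi_{(\cdot)} v(\cdot)$ (one checks $\phi_t \phi_{-t} = \mathrm{id}$ and $\phi_0 = \mathrm{id}$, so these are genuinely mutually inverse). The entire content of the lemma is then the norm equivalence, and this is a pointwise-in-$t$ estimate integrated up. First I would fix $u \in L^p_X$ and apply the two bounds in \eqref{eq:assOnEvolvingSpaces} at each $t$: writing $u(t) = \phi_t \tilde u(t)$ gives $\norm{u(t)}{X(t)} = \norm{\phi_t \tilde u(t)}{X(t)} \leq C_X \norm{\tilde u(t)}{X_0}$, and conversely $\norm{\tilde u(t)}{X_0} = \norm{\phi_{-t} u(t)}{X_0} \leq C_X \norm{u(t)}{X(t)}$.

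For $p \in [1,\infty)$, I would raise these inequalities to the $p$-th power, integrate over $(0,T)$, and take $p$-th roots, which yields
\begin{equation*}
\frac{1}{C_X}\norm{u}{L^p_X} \leq \norm{\tilde u}{L^p(0,T;X_0)} \leq C_X \norm{u}{L^p_X}.
\end{equation*}
For $p = \infty$ the same two pointwise inequalities, after taking the essential supremum over $t \in [0,T]$, give the identical chain of estimates (using monotonicity of $\esssup$). Measurability of $t \mapsto \norm{u(t)}{X(t)}$, needed for the integrals and essential suprema to make sense, is exactly the standing hypothesis recalled before the definition (for $u \in X_0$, hence for $\tilde u(t)$ in place of a fixed element after the usual approximation argument), together with the remark that \eqref{eq:ip} is a norm.

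There is essentially no obstacle here: the lemma is a direct transcription of the definition of $L^p_X$ plus the uniform bounds \eqref{eq:assOnEvolvingSpaces}, and completeness of $L^p_X$ follows immediately since $L^p(0,T;X_0)$ is complete and an isomorphism with equivalent norms transports completeness. The only point requiring a word of care is confirming that $\phi_{(\cdot)}$ and $\phi_{-(\cdot)}$ map measurable $X_0$-valued (resp. $X(t)$-valued) functions to measurable functions, so that the image of $L^p(0,T;X_0)$ under $\phi_{(\cdot)}$ genuinely lands in the space $L^p_X$ as defined; this is built into the setup via the assumed measurability of $t \mapsto \norm{\phi_t u}{X(t)}$ and a routine density/Pettis-type argument, and I would simply cite the analogous verification from \cite{AlpEllSti, vierling}.
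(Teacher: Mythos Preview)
Your proposal is correct and follows essentially the same approach as the paper: the bijection is immediate from the definition of $L^p_X$, and the norm equivalence comes from applying the pointwise bounds \eqref{eq:assOnEvolvingSpaces} and then integrating (or taking $\esssup$). The only place where the paper is more explicit is the measurability of $t \mapsto \norm{\phi_t \tilde u(t)}{X(t)}$ for general $\tilde u \in L^p(0,T;X_0)$: rather than merely citing \cite{AlpEllSti, vierling}, the appendix writes out the simple-function approximation (pointwise a.e.\ convergence of $\norm{\phi_t u_n(t)}{X(t)}$ to $\norm{\phi_t u(t)}{X(t)}$, with each approximant measurable by the standing hypothesis), but this is exactly the ``usual approximation argument'' you allude to.
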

\begin{proof}
We show the case $p=\infty$ here; an adaptation of the $p=2$ case done in \cite{AlpEllSti} easily proves the lemma for $p \in [1,\infty)$ ({see the appendix}). Let $u \in L^\infty_{X}$.  Measurability of $\tilde u$ follows since $u \in L^2_{X}$. Now, by definition, we have that for all $t \in [0,T] \backslash N$, $\norm{u(t)}{X(t)} \leq A$ where $N$ is a null set and $A=\norm{u}{L^\infty_X}.$ This means that for all $t \in [0,T]\backslash N$, $C_X^{-1}\norm{\tilde u(t)}{X_0} \leq \norm{u(t)}{X(t)} \leq A$ by the assumption \eqref{eq:assOnEvolvingSpaces}, i.e.,
\[\norm{\tilde u}{L^\infty(0,T;X_0)}=\esssup_{t \in [0,T]}\norm{\tilde u(t)}{X_0} \leq C_X{A} =C_X{\norm{u}{L^\infty_{X}}},\]
so $\tilde u \in L^\infty(0,T;X_0)$. Similarly, we conclude that if $\tilde u \in L^\infty(0,T;X_0)$ then $u \in L^\infty_X$.
\end{proof}
\begin{remark}
The dual operator $\phi_{-t}^*\colon X_0^* \to X^*(t)$ is also a linear homeomorphism with $\norm{\phi_{-t}^*}{} = \norm{\phi_{-t}}{}$ and $(\phi_{-t}^*)^{-1} = \phi_t^*$ \cite[Theorem 4.5-2 and \S 4.5]{kreyszig}, and if $X_0$ is separable, $t \mapsto \norm{\phi_{-t}^*f}{X^*(t)}$ is measurable for $f \in X_0^*$; thus, in the separable setting, the dual operator also satisfies the same boundedness properties as $\phi_t$. This means that the spaces $L^p_{X^*}$ are also well-defined Banach spaces given separable $\{X(t)\}_{t \in [0,T]}$ (the map $\phi_{-(\cdot)}^*$ plays the same role as $\phi_{(\cdot)}$ did for the spaces $L^p_X$).
\end{remark}
The following subspaces will be of use later:
\begin{align*}
C^k_X &= \{\xi \in L^2_X \mid \phi_{-(\cdot)}\xi(\cdot) \in C^k([0,T];X_0)\}\quad\text{for $k \in \{0,1,...\}$}\\
%\tilde{C}^1_V &= \{u \mid u(t) = \sum_{j=1}^m \alpha_j(t)\chi_j^t, \text{ $m \in \mathbb{N}$, $\alpha_j \in AC([0,T])$ and $\alpha_j' \in L^2(0,T)$}\}\\
%\end{align*}\
%{and define}
%\begin{align*}
\mathcal{D}_X &= \{\eta \in L^2_X \mid \phi_{-(\cdot)}\eta(\cdot) \in \mathcal{D}((0,T);X_0)\}.
%\mathcal{D}_X[0,T] &= \{\eta \in L^2_X \mid \phi_{-(\cdot)}\eta(\cdot) \in \mathcal{D}([0,T];X_0)\}.
\end{align*}
\subsubsection{Dual spaces}In this subsection, we assume that $\{X(t)\}_{t \in [0,T]}$ is reflexive. In order to retrieve weakly convergent subsequences from sequences that are bounded in $L^p_X$, we need $L^p_X$ to be reflexive. This leads us to consider a characterisation of the dual spaces. We let $p \in [1,\infty)$ and $(p,q)$ be a conjugate pair in this section.
\begin{theorem}\label{thm:dualSpaceIdentification} The space $(L^p_X)^*$ is isometrically isomorphic to $L^{q}_{X^*}$, and hence we may identify $(L^p_X)^* \equiv L^{q}_{X^*}$ and the duality pairing of $f \in L^{q}_{X^*}$ with $u \in L^p_X$ is given by
\[\langle f, u \rangle_{L^{q}_{X^*},L^p_{X}} = \int_0^T \langle f(t), u(t) \rangle_{X^*(t), X(t)}.\]
\end{theorem}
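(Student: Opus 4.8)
The plan is to write down the isomorphism explicitly and verify its properties by transporting everything to the Bochner space $L^p(0,T;X_0)$ via $\phi_{(\cdot)}$ (on $L^p_X$) and $\phi_{-(\cdot)}^*$ (on $L^q_{X^*}$), using the classical identification $(L^p(0,T;X_0))^* \equiv L^q(0,T;X_0^*)$; this is available because each $X(t)$, hence $X_0$, is reflexive, so $X_0^*$ has the Radon--Nikodym property. Define $\Lambda\colon L^q_{X^*}\to (L^p_X)^*$ by $\langle \Lambda f,u\rangle_{(L^p_X)^*,L^p_X} := \int_0^T \langle f(t),u(t)\rangle_{X^*(t),X(t)}$. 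The identity underpinning everything is that, writing $g:=\phi_{(\cdot)}^*f(\cdot)\in L^q(0,T;X_0^*)$ and $\tilde u:=\phi_{-(\cdot)}u(\cdot)\in L^p(0,T;X_0)$, the definition of the adjoint operators together with $\phi_{-t}\phi_t=\mathrm{id}$ yields, for a.e.\ $t$,
\[\langle f(t),u(t)\rangle_{X^*(t),X(t)} = \langle \phi_{-t}^*g(t),\phi_t\tilde u(t)\rangle_{X^*(t),X(t)} = \langle g(t),\tilde u(t)\rangle_{X_0^*,X_0}.\]
In particular $t\mapsto \langle f(t),u(t)\rangle_{X^*(t),X(t)}$ is measurable, and by Hölder $|\langle \Lambda f,u\rangle|\le \int_0^T\norm{f(t)}{X^*(t)}\norm{u(t)}{X(t)}\le \norm{f}{L^q_{X^*}}\norm{u}{L^p_X}$, so $\Lambda f\in (L^p_X)^*$ with $\norm{\Lambda f}{(L^p_X)^*}\le \norm{f}{L^q_{X^*}}$.

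For surjectivity, take $F\in (L^p_X)^*$. By Lemma~\ref{lem:pushforwardIso} the map $v\mapsto \phi_{(\cdot)}v(\cdot)$ is bounded $L^p(0,T;X_0)\to L^p_X$, so $\hat F:=F\circ\phi_{(\cdot)}$ lies in $(L^p(0,T;X_0))^*$; by the classical representation theorem (valid here by reflexivity, as above) there is $g\in L^q(0,T;X_0^*)$ with $\hat F(v)=\int_0^T\langle g(t),v(t)\rangle_{X_0^*,X_0}$. Set $f(t):=\phi_{-t}^*g(t)$; since $\phi_t^*\phi_{-t}^*=\mathrm{id}$ we get $\phi_{(\cdot)}^*f(\cdot)=g\in L^q(0,T;X_0^*)$, so $f\in L^q_{X^*}$. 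Then for every $u\in L^p_X$ the displayed identity gives $\langle \Lambda f,u\rangle=\int_0^T\langle g(t),\tilde u(t)\rangle=\hat F(\tilde u)=F(\phi_{(\cdot)}\tilde u(\cdot))=F(u)$, so $\Lambda f=F$ and $\Lambda$ is onto.

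It remains to prove $\norm{\Lambda f}{(L^p_X)^*}\ge \norm{f}{L^q_{X^*}}$, which also gives injectivity and upgrades the statement to an isometry. This cannot simply be transported through $\phi$, since $\phi_{(\cdot)}$ distorts norms by the constant $C_X$, so I would argue directly. Assume $f\ne 0$ and fix $\varepsilon\in(0,1)$. For a.e.\ $t$ choose $w(t)\in X(t)$ with $\norm{w(t)}{X(t)}\le 1$ and $\langle f(t),w(t)\rangle_{X^*(t),X(t)}\ge (1-\varepsilon)\norm{f(t)}{X^*(t)}$, and, for $p>1$, put $u(t):=\norm{f(t)}{X^*(t)}^{q-1}w(t)$; using $(q-1)p=q$ one finds $\norm{u}{L^p_X}\le \norm{f}{L^q_{X^*}}^{q/p}$ while $\langle \Lambda f,u\rangle\ge (1-\varepsilon)\norm{f}{L^q_{X^*}}^q$, hence $\norm{\Lambda f}{(L^p_X)^*}\ge (1-\varepsilon)\norm{f}{L^q_{X^*}}$, and $\varepsilon\to 0$ closes the case $p>1$. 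For $p=1$, $q=\infty$, one instead tests with $u=w\,\mathbf{1}_E$ where $E$ has positive measure and $\norm{f(t)}{X^*(t)}$ is within $\delta$ of $\norm{f}{L^\infty_{X^*}}$ on $E$, and sends $\varepsilon,\delta\to 0$. Combined with the first paragraph, $\Lambda$ is a surjective linear isometry, hence an isometric isomorphism with the stated pairing.

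The step I expect to be the main obstacle is the measurable selection of the approximate norming elements $w(t)$: one needs $t\mapsto w(t)$, equivalently $t\mapsto \phi_{-t}w(t)$, to be strongly measurable so that $u\in L^p_X$, and one needs $t\mapsto \norm{f(t)}{X^*(t)}$ measurable. In the separable setting — which is exactly the setting in which $L^q_{X^*}$ is defined (cf.\ the Remark above) — the first is handled by a standard measurable-selection theorem applied to the pulled-back $X_0$-valued data, and the second is the measurability already recorded in that Remark. Everything else is routine: the pointwise adjoint identity is immediate from the definitions of $\phi_{-t}^*$ and $\phi_t^*$, and the Hölder and completeness facts are already available through Lemma~\ref{lem:pushforwardIso}.
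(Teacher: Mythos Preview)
Your proposal is correct and follows the same overall architecture as the paper: define the pairing map $\Lambda$, get $\norm{\Lambda f}{}\le \norm{f}{L^q_{X^*}}$ by H\"older, obtain surjectivity by pulling a given functional back through $\phi_{(\cdot)}$ to the Bochner dual $L^q(0,T;X_0^*)$, and then establish the reverse inequality $\norm{\Lambda f}{}\ge \norm{f}{L^q_{X^*}}$ by building near-norming test functions. The only substantive difference is in this last step. The paper does not invoke a measurable selection theorem directly; instead it first proves $\norm{l}{}=\norm{g}{L^q_{X^*}}$ for $g$ of the special form $g(t)=\sum_i \phi_{-t}^*\tilde g_i\,\chi_{E_i}(t)$ (pushforwards of $X_0^*$-valued simple functions), where the near-norming elements can be written down by hand, and then passes to general $g\in L^q_{X^*}$ by density of simple functions in $L^q(0,T;X_0^*)$ together with $\norm{l_n-l}{}\le \norm{g_n-g}{L^q_{X^*}}$. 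This is exactly the Diestel--Uhl device the paper alludes to, and it trades your appeal to an abstract selection theorem for an elementary density argument; your route is shorter to state but leans on a black box, while the paper's route is more self-contained. Either way the separability hypothesis is what makes the argument go through, as you correctly identify.
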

%Precisely, we want to show that $(L^p_X)^* = L^{q_{X^*}}$ where $(p,q)$ is a conjugate pair. 
To prove this theorem, although we can exploit the fact that the pullback is in a Bochner space, showing that the natural duality map is isometric is not so straightforward because $\phi_{(\cdot)}$ is not assumed to be an isometry. In fact, we have to go back to the foundations and emulate the proof for the dual space identification for Bochner spaces; see \cite[\S IV]{uhl}.
%\begin{lem}For $g \in L^{q}_{X^*}$ and $f \in L^p_X$, the expression 
%\[t \mapsto \langle g(t), f(t) \rangle_{X^*(t), X(t)}\]
%is measurable.
%\end{lem}
%\begin{proof}
%---
%Because $\phi_{(\cdot)}^*g(\cdot)=: \tilde g \in L^{q}(0,T;X_0^*)$ and $\tilde f \in L^p(0,T;X_0)$, there exist simple measurable functions
%$\tilde g_n(t) = \sum \tilde g_{i,n}\chi_{E_i}(t)$ and $\tilde f_n(t) = \sum \tilde f_{i,n}\chi_{E_i}(t)$ such that
%\begin{align*}
%\tilde g_n &\to \tilde g \qquad \text{in $L^{q}(0,T;X_0^*)$}\\
%\tilde f_n &\to \tilde f \qquad \text{in $L^{p}(0,T;X_0)$}
%\end{align*}
%and thus for a subsequence, which we relabel,
%\begin{align*}
%\tilde g_n(t) &\to \tilde g(t) \qquad \text{in $X_0^*$}\\
%\tilde f_n(t) &\to \tilde f(t) \qquad \text{in $X_0$}.
%\end{align*}
%which implies
%\begin{align*}
%g_n(t):=  \phi_{-t}^*\tilde g_n(t) =\sum  g_{i,n}\chi_{E_i}(t) &\to %g(t) \qquad \text{in $X^*(t)$}\\
%f_n(t) :=  \phi_{t} \tilde f_n(t) = \sum  f_{i,n}\chi_{E_i}(t) &\to %f(t) \qquad \text{in $X(t)$}.
%\end{align*}
%A calculation shows that
%\[|\langle g(t), f(t) \rangle_{X^*(t),X(t)} - \langle g_n(t), f_n(t) %\rangle_{X^*(t), X(t)}| \to 0,\]
%so $\langle g(t), f(t) \rangle_{X^*(t),X(t)}$ is measurable as it is %the pointwise limit of the functions
%\[\langle g_n(t), f_n(t) \rangle_{X^*(t), X(t)} = \langle \tilde %g_n(t), \tilde f_n(t) \rangle_{X_0^*, X_0}\]
%which are themselves measurable.
%\end{proof}
\begin{lem}
For every $g \in L^{q}_{X^*}$, the expression
\begin{equation}\label{eq:definitionOflittlel}
l(f) = \int_0^T \langle g(t), f(t) \rangle_{X^*(t), X(t)}\qquad\text{for all $f \in L^p_X$}
\end{equation}
defines a functional $l \in (L^p_X)^*$ such that
%\begin{equation}\label{eq:isoiso1}
$\norm{l}{} = \norm{g}{L^{q}_{X^*}}.$
%\end{equation}
\end{lem}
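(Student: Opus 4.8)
The plan is to emulate the classical duality argument for Bochner spaces (cf.\ \cite{uhl}), carrying the non-isometric homeomorphisms $\phi_t$ along explicitly. The first step is to rewrite the integrand of \eqref{eq:definitionOflittlel} as a Bochner pairing over the model space. With $\tilde f = \phi_{-(\cdot)}f \in L^p(0,T;X_0)$ and $\tilde g = \phi_{(\cdot)}^* g \in L^q(0,T;X_0^*)$ (this being the pullback of $g$, since by the preceding remark $\phi_{-(\cdot)}^*$ plays for $L^q_{X^*}$ the role $\phi_{(\cdot)}$ plays for $L^p_X$), the definition of the adjoint together with $\phi_{-t}\phi_t = \mathrm{id}_{X_0}$ gives
\[\langle g(t), f(t)\rangle_{X^*(t),X(t)} = \langle \phi_{-t}^*\tilde g(t),\, \phi_t\tilde f(t)\rangle_{X^*(t),X(t)} = \langle \tilde g(t), \tilde f(t)\rangle_{X_0^*,X_0}\qquad\text{for a.a.\ }t.\]
The right-hand side is measurable in $t$ (it is the pairing of the strongly measurable maps $\tilde g$ and $\tilde f$, hence an a.e.\ limit of measurable scalar functions), and since $t\mapsto\norm{g(t)}{X^*(t)}$ and $t\mapsto\norm{f(t)}{X(t)}$ lie in $L^q(0,T)$ and $L^p(0,T)$ respectively (this integrability being part of membership in $L^q_{X^*}$ and $L^p_X$), Hölder's inequality yields $|l(f)|\le\norm{g}{L^q_{X^*}}\norm{f}{L^p_X}$. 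Hence $l$ is a well-defined bounded linear functional on $L^p_X$ with $\norm{l}{}\le\norm{g}{L^q_{X^*}}$.

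For the reverse inequality, fix $\delta\in(0,1)$; the crux is to produce a \emph{strongly measurable} $y\colon[0,T]\to X_0$ with $\norm{\phi_t y(t)}{X(t)}\le 1$ and $\langle\tilde g(t),y(t)\rangle_{X_0^*,X_0}\ge(1-\delta)\norm{g(t)}{X^*(t)}$ for a.a.\ $t$. Pointwise such a $y(t)$ exists because $\norm{g(t)}{X^*(t)}=\sup\{\langle\tilde g(t),y\rangle : y\in X_0,\ \norm{\phi_t y}{X(t)}\le1\}$ (this is just the dual norm on $X^*(t)$ read through the isomorphism $\phi_t$); to select it measurably I would use a first-index procedure against a fixed countable dense set $\{y_k\}\subset X_0$ (such a set exists since separability of the $X(t)$ is in force here, as required already for $L^q_{X^*}$ to be well-defined). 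For a.a.\ $t$ there is an index $k$ with $\norm{\phi_t y_k}{X(t)}\le1$ and $\langle\tilde g(t),y_k\rangle>(1-\delta)\norm{g(t)}{X^*(t)}$ (approximate by $\{y_k\}$ a near-optimal competitor taken strictly inside the ball), and the least such index $k(t)$ has measurable level sets: this uses precisely the standing hypothesis that $t\mapsto\norm{\phi_t y_k}{X(t)}$ is measurable, the weak measurability of $\tilde g$, and measurability of $t\mapsto\norm{g(t)}{X^*(t)}=\norm{\phi_{-t}^*\tilde g(t)}{X^*(t)}$ (obtained by approximating $\tilde g$ in $L^q(0,T;X_0^*)$ by $X_0^*$-simple functions and invoking the remark that $t\mapsto\norm{\phi_{-t}^*\xi}{X^*(t)}$ is measurable for $\xi\in X_0^*$). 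Then $y(t):=y_{k(t)}$ is the required selection.

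Given $y(\cdot)$, for $q<\infty$ I set $\tilde f(t):=\norm{g(t)}{X^*(t)}^{q-1}y(t)$. Using $\norm{y(t)}{X_0}\le C_X\norm{\phi_t y(t)}{X(t)}\le C_X$ from \eqref{eq:assOnEvolvingSpaces} and $(q-1)p=q$, this $\tilde f$ lies in $L^p(0,T;X_0)$, so $f:=\phi_{(\cdot)}\tilde f\in L^p_X$ by Lemma \ref{lem:pushforwardIso}. Moreover $\norm{f(t)}{X(t)}=\norm{g(t)}{X^*(t)}^{q-1}\norm{\phi_t y(t)}{X(t)}\le\norm{g(t)}{X^*(t)}^{q-1}$, whence $\norm{f}{L^p_X}^p\le\int_0^T\norm{g(t)}{X^*(t)}^q=\norm{g}{L^q_{X^*}}^q$, while by the identity of the first paragraph
\[l(f)=\int_0^T\norm{g(t)}{X^*(t)}^{q-1}\langle\tilde g(t),y(t)\rangle_{X_0^*,X_0}\ge(1-\delta)\int_0^T\norm{g(t)}{X^*(t)}^q=(1-\delta)\norm{g}{L^q_{X^*}}^q.\]
Combining these, $\norm{l}{}\ge l(f)/\norm{f}{L^p_X}\ge(1-\delta)\norm{g}{L^q_{X^*}}^q/\norm{g}{L^q_{X^*}}^{q-1}=(1-\delta)\norm{g}{L^q_{X^*}}$ (the case $\norm{g}{L^q_{X^*}}=0$ being trivial), and letting $\delta\to0$ settles $q<\infty$. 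For $q=\infty$ (i.e.\ $p=1$) the weight $\norm{g(t)}{X^*(t)}^{q-1}$ is replaced by localisation: on $A_\delta:=\{t:\norm{g(t)}{X^*(t)}>\norm{g}{L^\infty_{X^*}}-\delta\}$, which has positive measure, take $\tilde f:=|A_\delta|^{-1}\chi_{A_\delta}\,y$; then $f=\phi_{(\cdot)}\tilde f\in L^1_X$ with $\norm{f}{L^1_X}\le1$ and $l(f)\ge\norm{g}{L^\infty_{X^*}}-2\delta$.

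The main obstacle is the measurable selection of $y(\cdot)$: because $\phi_t$ is not an isometry, one cannot simply transport the classical Bochner-space construction, and the selection has to be carried out against the family of norms $y\mapsto\norm{\phi_t y}{X(t)}$ on $X_0$, which are uniformly equivalent to $\norm{\cdot}{X_0}$ by \eqref{eq:assOnEvolvingSpaces} but genuinely $t$-dependent; one must verify both the measurability of the relevant level sets and the essential separable-valuedness of the selection. This is the point at which the argument goes back to the foundations, following \cite{uhl} (and \cite{vierling} for any non-separable refinements).
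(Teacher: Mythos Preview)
Your argument is correct and takes a genuinely different route from the paper on the reverse inequality $\norm{l}{}\geq\norm{g}{L^q_{X^*}}$. The paper proceeds in two steps: it first treats $g$ of the special form $g(t)=\sum_i \phi_{-t}^*\xi_i\,\chi_{E_i}(t)$ (pushforwards of Bochner simple functions), building a near-optimal $f$ by choosing on each $E_i$ a unit vector $x_{i,t}\in X(t)$ almost attaining $\norm{x_{i,t}^*}{X^*(t)}$ and weighting by the scalar $h(t)=\norm{g(t)}{X^*(t)}^{q/p}/\norm{g}{L^q_{X^*}}^{q/p}$; then it passes to arbitrary $g$ by density of simple functions in $L^q(0,T;X_0^*)$ together with the continuity of $g\mapsto l$. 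You instead attack general $g$ directly, rewriting the pairing over $X_0$ and producing the near-optimal direction $y(t)$ by a first-index selection against a fixed countable dense set in $X_0$, with the $t$-dependent constraint $\norm{\phi_t y}{X(t)}\leq 1$ handled through the standing measurability hypothesis on $t\mapsto\norm{\phi_t u}{X(t)}$. Your approach makes the strong measurability of the test element explicit (the paper is silent on why the pointwise choice $t\mapsto x_{i,t}$ produces an element of $L^p_X$), and your separate localisation argument for $q=\infty$ fills a genuine gap: both the weight $h$ and the density step in the paper's proof fail when $q=\infty$, so the endpoint $p=1$ is not actually covered there. Conversely, the paper's reduction to finitely many blocks $E_i$ sidesteps the selection machinery altogether in the case $q<\infty$, trading it for a routine density/continuity argument.
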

\begin{proof}
Let $g \in L^{q}_{X^*}$ and define $l\colon L^{p}_X \to \mathbb{R}$ by \eqref{eq:definitionOflittlel};
%\[l(f) = \int_0^T \langle g(t), f(t) \rangle_{X^*(t), X(t)}\qquad\text{for all $f \in L^p_X.$}\]
the integral is well-defined by similar reasoning as before (see Lemma 2.13 in \cite{AlpEllSti}). By H\"older's inequality, we have $|l(f)| \leq \norm{g}{L^{q}_{X^*}}\norm{f}{L^p_X},$
%\begin{align*}
%|l(f)| %&\leq \int_0^T \norm{g(t)}{X^*(t)}\norm{f(t)}{X(t)}\\
%&\leq \norm{g}{L^{q}_{X^*}}\norm{f}{L^p_X},
%\end{align*}
so $l \in (L^p_X)^*$ and $\norm{l}{} \leq \norm{g}{L^{q}_{X^*}}.$ We now show the reverse inequality. First suppose $g$ has the form $g(t) = \sum x_{i,t}^*\chi_{E_i}(t)$ where the $x_{i,t}^* \in X^*(t)$ and the $E_i$ are measurable, pairwise disjoint and partition $[0,T].$ It is clear that $\norm{g(t)}{X^*(t)} = \sum\norm{x_{i,t}^*}{X^*(t)}\chi_{E_i}(t).$
Let 
$h(t) = {\norm{g(t)}{X^*(t)}^{{q}\slash{p}}}\slash{\norm{g}{L^{q}_{X^*}}^{{q}\slash{p}}}$
%
%\[h(t) = \frac{\norm{g(t)}{X^*(t)}^{\frac{q}{p}}}{\norm{g}{L^{q}_{X^*}}^{\frac{q}{p}}}\]
which satisfies $\norm{h}{L^p(0,T)}^p = 1$
%\[ \norm{h}{L^p(0,T)}^p = \int_0^T \frac{\norm{g(t)}{X^*(t)}^{q}}{\norm{g}{L^{q}_{X^*}}^{q}} = 1\]
and $\int_0^T \norm{g(t)}{X^*(t)}h(t) = \norm{g}{L^{q}_{X^*}},$
%\begin{align*}
%\int_0^T \norm{g(t)}{X^*(t)}h(t) %&= \frac{1}{\norm{g}{L^{q}_{X^*}}^{\frac{q}{p}}}\int_0^T {\norm{g(t)}{X^*(t)}^{\frac{q}{p}+1}}{} = \frac{1}{\norm{g}{L^{q}_{X^*}}^{\frac{q}{p}}}\int_0^T {\norm{g(t)}{X^*(t)}^{q}} = \norm{g}{L^{q}_{X^*}}^{q - \frac{q}{p}}
% = \norm{g}{L^{q}_{X^*}},
%\end{align*}
%(where we used $\frac{q}{p} +1 =q$) 
hence for any $\epsilon > 0$ we have
\begin{equation}\label{eq:proofDualIsomorphism3}
\int_0^T \norm{g(t)}{X^*(t)}h(t) \geq \norm{g}{L^{q}_{X^*}} - \frac{\epsilon}{2}.
\end{equation}
Now choose $x_{i,t} \in X(t)$, $\norm{x_{i,t}}{X(t)} = 1$ such that
\begin{equation}\label{eq:proofDualSpaceIsomorphism1}
\norm{x_{i,t}^*}{X^*(t)} - \langle x_{i,t}^*, x_{i,t}\rangle_{X^*(t),X(t)} \leq \frac{\epsilon}{2\norm{h}{L^1(0,T)}}.
\end{equation}
Define $f \in L^p_X$ by $f(t) = \sum x_{i,t}h(t)\chi_{E_i}(t)$ and note that $\norm{f}{L^p_X}^p = \norm{h}{L^p(0,T)}^p.$
%\begin{align*}
%\norm{f}{L^p_X}^p &= \int_0^T \norm{\sum x_{i,t}h(t)\chi_{E_i}(t)}{X(t)}^p= \int_0^T \sum\norm{ x_{i,t}}{X(t)}^ph^p(t)\chi_{E_i}(t) = \sum\int_0^T h^p(t)\chi_{E_i}(t) = \norm{h}{L^p(0,T)}^p.
%\end{align*}
We obtain using \eqref{eq:proofDualSpaceIsomorphism1} and \eqref{eq:proofDualIsomorphism3} that $l(f) \geq \norm{g}{L^{q}_{X^*}} - \epsilon.$
%\begin{align*}
%l(f) %&= \int_0^T \langle g(t), f(t) \rangle_{X^*(t),X(t)} = \int_0^T \left \langle \sum_i x_{i,t}^*\chi_{E_i}(t), \sum_j x_{j,t}h(t)\chi_{E_j}(t) \right\rangle_{X^*(t),X(t)}\\
%&= \int_0^T h(t)\sum_i\langle  x_{i,t}^*, x_{i,t} \rangle_{X^*(t),X(t)}\chi_{E_i}(t)\\
%&\geq \int_0^T h(t) \sum_i\left(\norm{x_{i,t}^*}{X^*(t)} -  \frac{\epsilon}{2\norm{h}{L^1(0,T)}}\right)\chi_{E_i}(t)\tag{by \eqref{eq:proofDualSpaceIsomorphism1}}\\
%&= \int_0^T h(t)\norm{g(t)}{X^*(t)} - \int_0^T \frac{\epsilon}{2\norm{h}{L^1(0,T)}}h(t)\\
%&\geq \norm{g}{L^{q}_{X^*}} - \epsilon.\tag{by \eqref{eq:proofDualIsomorphism3}}
%\end{align*}
This proves that $\norm{l}{} = \norm{g}{L^{q}_{X^*}}$ whenever $g(t) = \sum x_{i,t}^*\chi_{E_i}(t)$ is of the stated form. Now suppose $g \in L^{q}_{X^*}$ is arbitrary. Then there exist $\tilde g_n(t) = \sum \tilde g_{i,n}\chi_{E_i}(t)$ with $\tilde g_{i,n} \in X_0^*$ such that $\tilde g_n \to \tilde g$ in $L^{q}(0,T;X_0^*)$ and so the sequence $g_n(t) := \phi_{-t}^*\tilde g_n(t) = \sum \phi_{-t}^*\tilde g_{i,n}\chi_{E_i}(t)$ satisfies $g_n \to g$ in $L^{q}_{X^*}$. Because the $\phi_{-t}^*\tilde g_{i,n} \in X^*(t)$, we know by our efforts above that $l_n\colon L^p_X\to \mathbb{R}$ defined
$l_n(f) = \int_0^T \langle g_n(t), f(t) \rangle_{X^*(t),X(t)}$ 
has norm $\norm{l_n}{} = \norm{g_n}{L^{q}_{X^*}}$. We also have %(recall that $l(f) = \int_0^T \langle g(t), f(t) \rangle_{X^*(t), X(t)}$)
\[\norm{l_n - l}{} \leq \norm{g_n - g}{L^{q}_{X^*}} \to 0\]
which implies $\lim_{n \to \infty} \norm{l_n}{} = \norm{l}{}$ and also $\lim_{n \to \infty} \norm{l_n}{} = \lim_{n \to \infty} \norm{g_n}{L^{q}_{X^*}} = \norm{g}{L^{q}_{X^*}}.$
\end{proof}
We have shown that $\mathcal{J}\colon L^{q}_{X^*} \to (L^p_X)^*$ defined by $\mathcal J(g) := l(\cdot) = \int_0^T \langle g(t), (\cdot)(t) \rangle_{X^*(t),X(t)}$
is isometric: $\norm{\mathcal Jg}{(L^q_X)^*} = \norm{l}{} = \norm{g}{L^{q}_{X^*}}$. We now show that $\mathcal J$ is onto. Given $l \in (L^p_X)^*$, define $\tilde L\colon L^p(0,T;X_0) \to \mathbb{R}$ by $\tilde L(\tilde v) = l(\phi_{(\cdot)}\tilde v(\cdot)) = l(v)$ for all $\tilde v \in L^p(0,T;X_0)$. It is obvious that $\tilde L \in L^p(0,T;X_0)^*,$ and by the dual space identification for Bochner spaces, there exists an $\tilde L^* \in L^{q}(0,T;X_0^*)$ such that
\begin{align*}
\langle l, v \rangle_{(L^p_X)^*, L^p_X} &= \langle \tilde L, \tilde v \rangle_{L^p(0,T;X_0)^*,L^p(0,T;X_0)} %= \int_0^T \langle \tilde L^*(t), \tilde v(t) \rangle_{X_0^*,X_0}
 = \int_0^T \langle \phi_{-t}^* \tilde L^*(t), v(t) \rangle_{X^*(t),X(t)},
\end{align*}
so $\mathcal J (\phi_{-(\cdot)}^*\tilde L^*(\cdot)) = l$ where $\phi_{-(\cdot)}^*\tilde L^*(\cdot) \in L^{q}_{X^*}.$ Hence $\mathcal J$ is onto, and we have proved Theorem \ref{thm:dualSpaceIdentification}.
\subsection{Function spaces on evolving surfaces}
%\subsubsection*{Assumptions on the evolving hypersurface}
We now make precise the assumptions on the evolving surface $\Omega(t)$ our Stefan problem is posed on and we discuss function spaces in the context of the previous subsections.
For each $t \in [0,T],$ let $\Omega(t) \subset \mathbb{R}^{n+1}$ be an orientable compact (i.e., no boundary) $n$-dimensional hypersurface of class $C^3$, and assume the existence of a flow $\Phi\colon [0,T] \times \mathbb{R}^{n+1} \to \mathbb{R}^{n+1}$ such that for all $t \in [0,T]$, with $\Omega_0 := \Omega(0)$, the map $\Phi_t^0(\cdot):=\Phi(t,\cdot)\colon \Omega_0 \to \Omega(t)$ is a $C^3$-diffeomorphism that satisfies
%\begin{equation}\label{paramvel}
%\begin{aligned}
$\frac{d}{dt}\Phi^0_t(\cdot) = \mathbf w(t,\Phi^0_t(\cdot))$ and
$\Phi^0_0(\cdot) = \text{Id}(\cdot)$
%\end{aligned}
%\end{equation}
%So given a point $x_0 \in \Omega_0$, $\Phi^0_t(x_0) \in \Omega(t)$.
for a given $C^2$ velocity field $\mathbf w\colon [0,T]\times\mathbb{R}^{n+1} \to \mathbb{R}^{n+1}$, which we assume satisfies the uniform bound
$|\sgradt \cdot \mathbf w(t)| \leq C$ for all $t \in [0,T]$. A $C^2$ normal vector field on the hypersurfaces is denoted by $\mathbf \nu\colon [0,T]\times \mathbb{R}^{n+1} \to \mathbb{R}^{n+1}$. It follows that the Jacobian $J^0_t := \det \mathbf{D}\Phi^0_t$ is $C^2$ and is uniformly bounded away from zero and infinity.

%\subsubsection*{Function spaces on evolving surfaces}
For $u\colon \Omega_0 \to \mathbb{R}$ and $v\colon \Omega(t) \to \mathbb{R}$, define the pushforward $\phi_t u = u \circ \Phi^t_0$ and pullback $\phi_{-t}v = v \circ \Phi_t^0$, where $\Phi^t_0 := (\Phi_t^0)^{-1}$. %Note that $\phi_{-t}$ is the inverse map of $\phi_t$. 
We showed in \cite{AlpEllSti2} that 
%\begin{align*}
$\phi_{t}\colon L^2(\Omega_0) \to L^2(\Omega(t))$ and $\phi_{t}\colon H^1(\Omega_0) \to H^1(\Omega(t))$
%\end{align*}
are linear homeomorphisms (with uniform bounds) and (thus) with $L^2 \equiv \{L^2(\Omega(t))\}_{t \in [0,T]},$ $H^1 \equiv \{H^1(\Omega(t))\}_{t \in [0,T]}$ and $H^{-1} \equiv \{H^{-1}(\Omega(t))\}_{t \in [0,T]},$ the spaces $L^2_{L^2}$, $L^2_{H^1}$ and $L^2_{H^{-1}}$ are well-defined (see \cite{AlpEllSti2, DziukElliottL2} for an overview of Lebesgue and Sobolev spaces on hypersurfaces) and we let $L^2_{H^1} \subset L^2_{L^2} \subset L^2_{H^{-1}}$ be a Gelfand triple.

A function $u \in C^1_{L^2}$ has a \emph{strong material derivative} defined by $\dot u(t) = \phi_t\left(\frac{d}{dt}(\phi_{-t}u(t))\right).$ Given a function $u \in L^2_{H^1}$, we say that it has a \emph{weak material derivative} $g \in L^2_{H^{-1}}$ if
\[ (u, \dot \eta)_{L^2_{L^2}} = -\langle g, \eta \rangle_{L^2_{H^{-1}}, L^2_{H^1}} - (u, \eta\sgrad \cdot \mathbf w)_{L^2_{L^2}} \qquad \forall \eta \in \mathcal{D}_{H^1}\]
holds, and we write $\dot u$ or $\md u$ instead of $g$.
Define the Hilbert spaces (see \cite{AlpEllSti, AlpEllSti2} for more details)
\begin{align*}
\mathcal W(H^1(\Omega_0), H^{-1}(\Omega_0)) &= \{ u \in L^2(0,T;H^1(\Omega_0)) \mid u' \in L^2(0,T;H^{-1}(\Omega_0))\}\\
W(H^1, H^{-1}) &= \{ u \in L^2_{H^1} \mid \dot u \in L^2_{H^{-1}}\}
\end{align*}
endowed with the natural inner products. For subspaces $X \cts H^1$ and $Y \cts H^{-1}$, we also define the subset $W(X,Y) \subset W(H^1, H^{-1})$ in the natural manner.
\begin{lem}[See \cite{AlpEllSti, AlpEllSti2}]Let either $X=W(H^1, H^{-1})$ and $X_0 = \mathcal{W}(H^1(\Omega_0), H^{-1}(\Omega_0))$, or $X=W(H^1, L^2)$ and $X_0=\mathcal{W}(H^1(\Omega_0), L^2(\Omega_0))$. For such pairs, the space $X$ is isomorphic to $X_0$ via $\phi_{-(\cdot)}$ with an equivalence of norms:
\[C_1\norm{\phi_{-(\cdot)}v(\cdot)}{X_0} \leq \norm{v}{X} \leq C_2\norm{\phi_{-(\cdot)}v(\cdot)}{X_0}.\]
\end{lem}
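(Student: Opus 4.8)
The plan is to reduce the statement, in either of the two cases, to two facts: (i) the pullback norm-equivalence isomorphisms $L^2_{H^1}\cong L^2(0,T;H^1(\Omega_0))$ and $L^2_{L^2}\cong L^2(0,T;L^2(\Omega_0))$ from Lemma~\ref{lem:pushforwardIso} (and the corresponding one $L^2_{H^{-1}}\cong L^2(0,T;H^{-1}(\Omega_0))$ from \cite{AlpEllSti2}); and (ii) the identification of the weak material derivative $\dot u$ of $u\in W(H^1,H^{-1})$ with the weak (Bochner) time-derivative of the pullback $\tilde u := \phi_{-(\cdot)}u(\cdot)$, namely
\[
u\in W(H^1,H^{-1}) \quad\Longleftrightarrow\quad \tilde u\in\mathcal{W}(H^1(\Omega_0),H^{-1}(\Omega_0)) \quad\text{and then}\quad \tilde u' = \widetilde{\dot u}.
\]
Granting (ii), the claimed equivalence of norms is immediate: $\norm{\tilde u}{\mathcal{W}(H^1(\Omega_0),H^{-1}(\Omega_0))}^2 = \norm{\tilde u}{L^2(0,T;H^1(\Omega_0))}^2 + \norm{\widetilde{\dot u}}{L^2(0,T;H^{-1}(\Omega_0))}^2$, and by (i) the two summands are comparable (above and below) to $\norm{u}{L^2_{H^1}}^2$, respectively $\norm{\dot u}{L^2_{H^{-1}}}^2$; summing gives $C_1\norm{\tilde u}{\mathcal{W}}\le\norm{u}{W}\le C_2\norm{\tilde u}{\mathcal{W}}$.

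To establish the forward implication in (ii) I would test the defining identity for $\dot u$ against an arbitrary $\eta\in\mathcal{D}_{H^1}$, write $\eta(t) = \phi_t\tilde\eta(t)$ with $\tilde\eta := \phi_{-(\cdot)}\eta(\cdot)\in\mathcal{D}((0,T);H^1(\Omega_0))$, and transport every term to $\Omega_0$ using three ingredients: first, $\dot\eta(t) = \phi_t(\tilde\eta'(t))$, since $\eta\in\mathcal{D}_{H^1}$ has a strong material derivative; second, the change-of-variables formula $\int_{\Omega(t)}g = \int_{\Omega_0}(\phi_{-t}g)J^0_t$ for the two $L^2(\Omega(t))$-pairings, together with the pullback of the $H^{-1}(\Omega(t))$--$H^1(\Omega(t))$ duality from \cite{AlpEllSti2}; and third, $\sgradt\cdot\mathbf{w}(t) = \phi_t(d(t))$, where $d := \phi_{-(\cdot)}(\sgradt\cdot\mathbf{w}(\cdot))$ is uniformly bounded by assumption. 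After these substitutions the identity becomes, with the Jacobian weight $J^0_t$ appearing throughout,
\[
\int_0^T \langle J^0_t\,\tilde u(t),\, \tilde\eta'(t)\rangle\,\mathrm{d}t \;=\; -\int_0^T \langle J^0_t\,\widetilde{\dot u}(t) + J^0_t\,d(t)\,\tilde u(t),\, \tilde\eta(t)\rangle\,\mathrm{d}t .
\]
Now I substitute $\tilde\zeta(t):=J^0_t\,\tilde\eta(t)$, which ranges over all of $\mathcal{D}((0,T);H^1(\Omega_0))$ as $\tilde\eta$ does (because $J^0_{(\cdot)}$ and $1/J^0_{(\cdot)}$ are $C^2$ in time and uniformly bounded above and below), and use $J^0_t\,\tilde\eta'(t) = \partial_t(J^0_t\,\tilde\eta(t)) - (\partial_t J^0_t)\,\tilde\eta(t)$ together with the standard Jacobian transport identity $\partial_t J^0_t = J^0_t\,d(t)$: the two $d(t)\tilde u(t)$ contributions cancel exactly, leaving $\int_0^T\langle\tilde u(t),\tilde\zeta'(t)\rangle\,\mathrm{d}t = -\int_0^T\langle\widetilde{\dot u}(t),\tilde\zeta(t)\rangle\,\mathrm{d}t$ for all $\tilde\zeta\in\mathcal{D}((0,T);H^1(\Omega_0))$. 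Since $\widetilde{\dot u}\in L^2(0,T;H^{-1}(\Omega_0))$ by (i), this says precisely that $\tilde u$ has weak derivative $\tilde u' = \widetilde{\dot u}$, hence $\tilde u\in\mathcal{W}(H^1(\Omega_0),H^{-1}(\Omega_0))$.

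For the converse implication I would take $\tilde u\in\mathcal{W}(H^1(\Omega_0),H^{-1}(\Omega_0))$, set $u:=\phi_{(\cdot)}\tilde u(\cdot)\in L^2_{H^1}$ and define $\dot u := \phi_{(\cdot)}(\tilde u'(\cdot))\in L^2_{H^{-1}}$, and run the computation above in reverse --- now legitimately using the product rule $\partial_t(J^0_t\,\tilde\eta) = (\partial_t J^0_t)\,\tilde\eta + J^0_t\,\tilde\eta'$ and $\partial_t J^0_t = J^0_t\,d$ --- to verify that $u$ satisfies the weak-material-derivative identity with this $\dot u$, so that $u\in W(H^1,H^{-1})$; this yields the other norm inequality. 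The case $X=W(H^1,L^2)$ is entirely analogous, and in fact simpler: there $\phi_{-t}$ on $L^2(\Omega(t))$ is merely composition with the flow, so there is no duality bookkeeping, the $L^2_{L^2}$ isomorphism replaces the $L^2_{H^{-1}}$ one, and one arrives at $\tilde u\in\mathcal{W}(H^1(\Omega_0),L^2(\Omega_0))$ with $\tilde u' = \widetilde{\dot u}\in L^2(0,T;L^2(\Omega_0))$.

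I expect the main obstacle to be the bookkeeping at the $H^{-1}$ level: one has to use the correct, Gelfand-triple-compatible pullback on $H^{-1}(\Omega(t))$ (the one extending the $L^2$-pullback), carefully track the Jacobian weight $J^0_t$ that is produced when the $L^2(\Omega(t))$- and duality pairings are written over $\Omega_0$, and check that multiplication by $J^0_t$, by $1/J^0_t$, and by $d$ define bounded operators on $L^2(0,T;H^{-1}(\Omega_0))$ --- and it is exactly here that the standing uniform $C^2$-bounds on $\mathbf{w}$, on $\sgradt\cdot\mathbf{w}$, and on $J^0_t$ (hence on $\partial_t J^0_t$) enter. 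One further small point worth noting is that, to read off the weak derivative, it suffices to test against $\mathcal{D}((0,T);H^1(\Omega_0))$, equivalently against all of $\mathcal{D}_{H^1}$, which is precisely the class of test functions in the definition of $\dot u$; no additional density step is needed.
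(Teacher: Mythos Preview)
The paper does not give its own proof of this lemma; it is stated with a citation to \cite{AlpEllSti, AlpEllSti2}, and the result is taken from there. Your reconstruction is correct and is precisely the argument carried out in those references: pull the defining identity for $\dot u$ back to $\Omega_0$ with the Jacobian weight $J^0_t$, then use the transport identity $\partial_t J^0_t = J^0_t\,\phi_{-t}(\sgradt\cdot\mathbf w)$ so that the divergence term cancels, leaving exactly $\tilde u' = \widetilde{\dot u}$; the norm equivalence then follows from Lemma~\ref{lem:pushforwardIso} applied to each summand. One small quibble: you assert that $\tilde\zeta := J^0_{(\cdot)}\tilde\eta$ ranges over all of $\mathcal{D}((0,T);H^1(\Omega_0))$ as $\tilde\eta$ does ``because $J^0_{(\cdot)}$ and $1/J^0_{(\cdot)}$ are $C^2$ in time'', but $C^2$ regularity in $t$ does not preserve $C^\infty_c$. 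This is harmless, however, since it suffices to test the Bochner weak derivative against $C^1_c((0,T);H^1(\Omega_0))$ (by density of $\mathcal D$ in that class and continuity of both sides), and multiplication by $J^0_{(\cdot)}$ and its reciprocal \emph{is} a bijection there.
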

We showed in \cite{AlpEllSti, AlpEllSti2} that for $u$, $v \in W(H^1, H^{-1})$, the map $t \mapsto (u(t), v(t))_{L^2(\Omega(t))}$ is absolutely continuous, and 
\[\frac{d}{dt}\int_{\Omega(t)}u(t)v(t) = \langle \dot u(t), v(t) \rangle + \langle \dot v(t), u(t)\rangle + \int_{\Omega(t)}u(t)v(t)\sgrad \cdot \mathbf w(t)\]
holds for almost all $t$, where the duality pairing is between $H^{-1}(\Omega(t))$ and  $H^1(\Omega(t))$.
%The first space is a Hilbert space due to \cite{AlpEllSti} and \cite{AllEllSti2}.
%\begin{cor}The space $L^1_{L^1}$ is well-defined and the map $\phi_{(\cdot)}$ is an isomorphism between $L^1(0,T;L^1(\Omega_0))$ and $L^1_{L^1}$, and there is an equivalence of norms.
%\end{cor}
%
%\begin{defn}[The space $L^\infty_{L^\infty}$]
%Define the space 
%\[L^\infty_{L^\infty} = \{ u \in L^2_{L^2} \mid \esssup_{t \in [0,T]}\esssup_{x \in \Omega(t)}{|u(t,x)|} < \infty\}.\]
%In other words, $u \in L^\infty_{L^\infty}$ if $u \in L^2_{L^2}$ and for almost all $t \in %[0,T]$, $|u(t,x)| \leq C$ for almost all $x \in \Omega(t)$.
%
%Give it the norm
%\[\norm{u}{L^\infty_{L^\infty}} = \esssup_{t \in [0,T]}\esssup_{x \in \Omega(t)}{|u(t,x)|}.\]
%\end{defn}
\subsubsection{Some useful results}
In this subsection, $p$ and $q$ are not necessarily conjugate. The first part of the following lemma is a particular realisation of Lemma \ref{lem:pushforwardIso}. 
\begin{lem}\label{lem:lplqisomorphism}For $p$, $q \in [1, \infty]$, the spaces $L^p_{L^q}$ and $L^p(0,T;L^q(\Omega_0))$ are isomorphic via the map $\phi_{(\cdot)}$ with an equivalence of norms. If $q=\infty$ the spaces are isometrically isomorphic. The embedding $L^\infty_{L^\infty} \subset L^p_{L^q}$ is continuous.
\end{lem}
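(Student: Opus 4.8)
The plan is to treat the three assertions in turn, each time transporting the corresponding classical statement about Bochner spaces $L^p(0,T;L^q(\Omega_0))$ across the pullback map. For the first assertion, I would apply Lemma \ref{lem:pushforwardIso} directly with $X(t) = L^q(\Omega(t))$ and $X_0 = L^q(\Omega_0)$, provided one first checks that this family satisfies the standing assumptions \eqref{eq:assOnEvolvingSpaces}. The required uniform homeomorphism bounds come from the change of variables formula on the hypersurface: for $v\colon\Omega(t)\to\mathbb R$ one has $\phi_{-t}v = v\circ\Phi_t^0$, and $\int_{\Omega_0}|\phi_{-t}v|^q = \int_{\Omega(t)}|v|^q\, (J_t^0)^{-1}\circ\Phi_t^0$ (with the obvious modification for $q=\infty$), so the uniform bounds on the Jacobian $J_t^0$ away from $0$ and $\infty$ give constants $C_X$ independent of $t$; measurability of $t\mapsto\norm{\phi_t u}{L^q(\Omega(t))}$ follows from the smoothness of the flow. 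Then Lemma \ref{lem:pushforwardIso} yields the isomorphism and the norm equivalence with constant $C_X$ immediately.

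For the second assertion, when $q=\infty$ the change of variables introduces no Jacobian factor at the level of the essential supremum: $\esssup_{\Omega_0}|v\circ\Phi_t^0| = \esssup_{\Omega(t)}|v|$ because $\Phi_t^0$ is a diffeomorphism (it maps null sets to null sets in both directions). Hence $\phi_{-t}\colon L^\infty(\Omega(t))\to L^\infty(\Omega_0)$ is an isometry for each $t$, i.e.\ $C_X=1$ in \eqref{eq:assOnEvolvingSpaces}, and then revisiting the proof of Lemma \ref{lem:pushforwardIso} (for both the $p<\infty$ and $p=\infty$ cases) shows $\norm{u}{L^p_{L^\infty}} = \norm{\phi_{-(\cdot)}u(\cdot)}{L^p(0,T;L^\infty(\Omega_0))}$ exactly, so the spaces are isometrically isomorphic.

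For the third assertion, I would combine the above with the elementary continuous embedding $L^\infty(0,T;L^\infty(\Omega_0)) \subset L^p(0,T;L^q(\Omega_0))$, which holds because $\Omega_0$ and $(0,T)$ both have finite measure: $\norm{w}{L^p(0,T;L^q(\Omega_0))} \leq T^{1/p}|\Omega_0|^{1/q}\norm{w}{L^\infty(0,T;L^\infty(\Omega_0))}$, with the usual conventions when $p$ or $q$ is $\infty$. Transporting this through the (uniformly bi-bounded) isomorphisms $\phi_{(\cdot)}$ on each side gives $\norm{u}{L^p_{L^q}} \leq C\,\norm{u}{L^\infty_{L^\infty}}$ for a constant $C$ depending only on $T$, $|\Omega_0|$, $p$, $q$ and $C_X$. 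I do not expect any genuine obstacle here; the only mild subtlety is bookkeeping the measurability and null-set arguments (so that $L^\infty_{L^\infty}$ sits inside $L^p_{L^q}$ as a genuine subspace, not merely up to isomorphism), but this is routine given that the same pullback map $\phi_{(\cdot)}$ implements all of these identifications simultaneously.
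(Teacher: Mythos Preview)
Your proposal is correct and follows exactly the route the paper indicates: the paper does not give a proof but simply notes before the lemma that ``the first part of the following lemma is a particular realisation of Lemma~\ref{lem:pushforwardIso}'', and you have supplied precisely the verification (Jacobian bounds for the $L^q$ homeomorphism constants, the $C_X=1$ observation for $q=\infty$, and the finite-measure embedding pushed through $\phi_{(\cdot)}$) that this remark invites. One minor slip: in your change-of-variables formula the factor should be $(J_t^0)^{-1}\circ\Phi_0^t$ (the inverse flow) rather than $\circ\,\Phi_t^0$, but this does not affect the argument.
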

\begin{lem}\label{lem:compactEmbedding}The space $W(H^1, H^{-1})$ is compactly embedded in $L^2_{L^2}$.
\end{lem}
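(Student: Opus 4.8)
The plan is to transport everything back to the fixed domain $\Omega_0$ via the pullback $\phi_{-(\cdot)}$ and invoke the classical Aubin--Lions--Simon compactness theorem there. First I would recall that, by Lemma~\ref{lem:lplqisomorphism} and the isomorphism lemma for $W(H^1,H^{-1})$ stated just above, the map $u \mapsto \tilde u := \phi_{-(\cdot)}u(\cdot)$ is a linear homeomorphism from $W(H^1,H^{-1})$ onto $\mathcal W(H^1(\Omega_0),H^{-1}(\Omega_0))$, and simultaneously a linear homeomorphism from $L^2_{L^2}$ onto $L^2(0,T;L^2(\Omega_0))$. Hence the embedding $W(H^1,H^{-1}) \cts L^2_{L^2}$ factors as $u \mapsto \tilde u \mapsto \tilde u \mapsto \phi_{(\cdot)}\tilde u = u$, where the middle arrow is the inclusion $\mathcal W(H^1(\Omega_0),H^{-1}(\Omega_0)) \hookrightarrow L^2(0,T;L^2(\Omega_0))$. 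Since a composition of bounded operators with a compact operator is compact, it suffices to show that this middle inclusion is compact.

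The compactness of $\mathcal W(H^1(\Omega_0),H^{-1}(\Omega_0)) \compact L^2(0,T;L^2(\Omega_0))$ is exactly the Aubin--Lions--Simon lemma applied to the Gelfand triple $H^1(\Omega_0) \compact L^2(\Omega_0) \cts H^{-1}(\Omega_0)$: the first embedding is compact because $\Omega_0$ is a compact $C^3$ hypersurface (Rellich--Kondrachov on a closed manifold), the second is continuous, and a bounded set in $\mathcal W$ is bounded in $L^2(0,T;H^1(\Omega_0))$ with time-derivatives bounded in $L^2(0,T;H^{-1}(\Omega_0))$, so the Aubin--Lions hypotheses are met. Thus any sequence bounded in $\mathcal W$ has a subsequence converging strongly in $L^2(0,T;L^2(\Omega_0))$, which is the desired compactness of the inclusion.

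Finally I would assemble the pieces: given a bounded sequence $(u_n)$ in $W(H^1,H^{-1})$, the pullbacks $(\tilde u_n)$ are bounded in $\mathcal W(H^1(\Omega_0),H^{-1}(\Omega_0))$ by the norm equivalence, hence along a subsequence $\tilde u_{n_k} \to \tilde u$ strongly in $L^2(0,T;L^2(\Omega_0))$; pushing forward and using that $\phi_{(\cdot)}\colon L^2(0,T;L^2(\Omega_0)) \to L^2_{L^2}$ is bounded, we get $u_{n_k} \to \phi_{(\cdot)}\tilde u$ strongly in $L^2_{L^2}$. This proves the compact embedding. The only point requiring a little care — and the main obstacle, though a mild one — is checking that the pullback genuinely intertwines the two structures so that the factorisation through the fixed-domain spaces is legitimate; this is precisely what the cited isomorphism lemmas from \cite{AlpEllSti,AlpEllSti2} and Lemma~\ref{lem:lplqisomorphism} provide, so no new work is needed beyond quoting them correctly.
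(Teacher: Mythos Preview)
Your proposal is correct and is precisely the natural argument: pull back to $\Omega_0$ via the isomorphisms $W(H^1,H^{-1}) \cong \mathcal W(H^1(\Omega_0),H^{-1}(\Omega_0))$ and $L^2_{L^2} \cong L^2(0,T;L^2(\Omega_0))$, then apply Aubin--Lions--Simon using $H^1(\Omega_0) \compact L^2(\Omega_0) \cts H^{-1}(\Omega_0)$. The paper in fact states this lemma without proof, so your argument supplies exactly what the authors take as understood from the cited isomorphism results in \cite{AlpEllSti,AlpEllSti2} together with the classical compactness theorem.
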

%\begin{lem}\label{lem:LinftyL2continuousEmbedding}For any $p, q \geq 1$, the embedding $L^\infty_{L^\infty} \subset L^p_{L^q}$ is continuous.
%\end{lem}
\begin{theorem}[Dominated convergence theorem for $L^p_{L^q}$]\label{thm:DCT}
Let $p$, $q \in [1,\infty)$. Let $\{w_n\}$ and $w$ be functions such that $\{\tilde w_n\}$ and $\tilde w$ are measurable (eg. membership of $L^1_{L^1}$ will suffice). If for almost all $t \in [0,T]$, 
\begin{equation*}
\begin{aligned}
w_n(t) &\to w(t) &&\text{almost everywhere in $\Omega(t)$}\\
%f(w_n(t)) &\to f(w(t)) &&\text{almost everywhere in $\Omega(t)$}\\
\exists g \in L^p_{L^q} : |w_n(t)| &\leq g(t)&&\text{almost everywhere in $\Omega(t)$ and for all $n$},
\end{aligned}
\end{equation*}
then $w_n \to w$ in $L^p_{L^q}$.
\end{theorem}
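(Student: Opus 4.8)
The plan is to transport everything to the reference configuration $\Omega_0$ via the pullback $\phi_{-(\cdot)}$ and then invoke the classical scalar dominated convergence theorem twice: first on $\Omega_0$ for almost every fixed time, then on $[0,T]$. Writing $\tilde w_n(t) = \phi_{-t}w_n(t)$, $\tilde w(t) = \phi_{-t}w(t)$ and $\tilde g(t) = \phi_{-t}g(t)$, Lemma \ref{lem:lplqisomorphism} gives $\tilde g \in L^p(0,T;L^q(\Omega_0))$. Since the pullback is composition with the diffeomorphism $\Phi^0_t\colon \Omega_0 \to \Omega(t)$, the bound $|w_n(t)| \leq g(t)$ a.e.\ in $\Omega(t)$ becomes $|\tilde w_n(t)(x)| = |w_n(t)(\Phi^0_t(x))| \leq g(t)(\Phi^0_t(x)) = \tilde g(t)(x)$ for a.e.\ $x \in \Omega_0$ and a.e.\ $t$; together with the assumed measurability of $\tilde w_n$ this shows $\tilde w_n \in L^p(0,T;L^q(\Omega_0))$ with $\norm{\tilde w_n}{L^p(0,T;L^q(\Omega_0))} \leq \norm{\tilde g}{L^p(0,T;L^q(\Omega_0))}$, and likewise $\tilde w \in L^p(0,T;L^q(\Omega_0))$. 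By Lemma \ref{lem:lplqisomorphism} again it then suffices to prove $\tilde w_n \to \tilde w$ in $L^p(0,T;L^q(\Omega_0))$.

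Next I would transfer the pointwise convergence. Because each $\Phi^0_t$ is a $C^3$-diffeomorphism with Jacobian bounded away from $0$ and $\infty$, it and its inverse map surface-measure null sets to null sets; hence, for a.e.\ $t$, the hypothesis $w_n(t) \to w(t)$ a.e.\ on $\Omega(t)$ yields $\tilde w_n(t)(x) \to \tilde w(t)(x)$ for a.e.\ $x \in \Omega_0$. Fix such a $t$ with, additionally, $\tilde g(t) \in L^q(\Omega_0)$ (true for a.e.\ $t$ since $\tilde g \in L^p(0,T;L^q(\Omega_0))$). Then $|\tilde w_n(t) - \tilde w(t)|^q \leq (2\tilde g(t))^q \in L^1(\Omega_0)$ and $\tilde w_n(t) \to \tilde w(t)$ a.e.\ in $\Omega_0$, so the classical dominated convergence theorem on $\Omega_0$ gives $h_n(t) := \norm{\tilde w_n(t) - \tilde w(t)}{L^q(\Omega_0)}^p \to 0$. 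Since $\tilde w_n - \tilde w \in L^p(0,T;L^q(\Omega_0))$, the map $t \mapsto h_n(t)$ is measurable, and $h_n(t) \leq 2^p \norm{\tilde g(t)}{L^q(\Omega_0)}^p =: H(t)$ with $H \in L^1(0,T)$. A second application of the classical dominated convergence theorem, this time on $[0,T]$, gives $\int_0^T h_n(t)\,dt \to 0$, i.e.\ $\tilde w_n \to \tilde w$ in $L^p(0,T;L^q(\Omega_0))$. Transferring back via Lemma \ref{lem:lplqisomorphism} yields $w_n \to w$ in $L^p_{L^q}$, as required.

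The argument is essentially bookkeeping, and the point needing the most care is the handling of exceptional null sets: one combines the null set of ``bad'' times with, for each good time, the preimage under $\Phi^0_t$ of the $\Omega(t)$-null set on which $w_n(t) \to w(t)$ fails, and one must know that this preimage is again null --- which is exactly where the $C^3$-diffeomorphism property (equivalently, the uniform bounds on the Jacobian $J^0_t$) enters. The measurability of $t \mapsto h_n(t)$ and the membership $\tilde w_n \in L^p(0,T;L^q(\Omega_0))$ each deserve a line but follow from standard facts about Bochner spaces together with Lemma \ref{lem:lplqisomorphism}. One could alternatively phrase the whole thing as a dominated convergence theorem in the Bochner space $L^p(0,T;L^q(\Omega_0))$, but establishing the required $L^q(\Omega_0)$-norm convergence $\norm{\tilde w_n(t) - \tilde w(t)}{L^q(\Omega_0)} \to 0$ for a.e.\ $t$ uses precisely the first of the two scalar applications above, so nothing is gained.
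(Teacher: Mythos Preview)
The paper states Theorem~\ref{thm:DCT} without proof, so there is nothing to compare against. Your argument --- pull back to the fixed domain $\Omega_0$ via Lemma~\ref{lem:lplqisomorphism}, use that $\Phi^0_t$ preserves null sets to transfer the a.e.\ convergence and the pointwise domination, then apply the scalar dominated convergence theorem first in $x$ and then in $t$ --- is exactly the expected route and is correct. The only places one might tighten the write-up are (i) to note explicitly that $|w(t)|\le g(t)$ a.e.\ follows from the pointwise limit of $|w_n(t)|\le g(t)$, which you use when claiming $\tilde w\in L^p(0,T;L^q(\Omega_0))$, and (ii) to remark that the assumed measurability of $\tilde w_n,\tilde w$ together with separability of $L^q(\Omega_0)$ yields the Bochner measurability needed to speak of membership in $L^p(0,T;L^q(\Omega_0))$; both are routine.
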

%\begin{proof}
%\end{proof}
%\begin{remark}
%In the above proof we used that $\Phi^t_0$ maps null sets to null sets. %To see this, consider
%\[0=\int_{M_t}\mathrm{d}\mu(t) = \int_{\Phi^t_0(M_t)}J_t^0 \;\mathrm{d}\mu_0\]
%and note that this implies $\mu(0)(\Phi^t_0(M_t)) = 0$ since $J_t^0$ is positive.
%\end{remark}
%We need the following result for an $L^\infty$ estimate.
\begin{lem}\label{lem:ibpWithMaxFunction}If $u \in W(H^1, H^{-1})$, then
\begin{equation}\label{eq:ibpWithMaxFunction}
2\int_0^T \langle \dot u(t), u^+(t) \rangle_{H^{-1}(\Omega(t)), H^1(\Omega(t))} = \int_{\Omega(T)}u^+(T)^2 - \int_{\Omega_0}u^+(0)^2 - \int_0^T \int_{\Omega(t)}u^+(t)^2\sgrad \cdot \mathbf w.
\end{equation}
\end{lem}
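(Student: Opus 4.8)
The plan is to prove \eqref{eq:ibpWithMaxFunction} first for functions that are smooth in time and then to recover the general case by density, using throughout a $C^2$ regularisation $G_\varepsilon$ of $r\mapsto\tfrac12(r^+)^2$ whose derivative approximates $r\mapsto r^+$. Concretely I would take $G_\varepsilon$ convex and $C^2$ with $0\le G_\varepsilon''\le 1$, $0\le G_\varepsilon'(r)\le r^++\varepsilon$, $0\le G_\varepsilon(r)\le\tfrac12(r^+)^2+\varepsilon^2$, and $G_\varepsilon'(r)\to r^+$, $G_\varepsilon(r)\to\tfrac12(r^+)^2$ pointwise as $\varepsilon\to0$ (for instance, $G_\varepsilon\equiv0$ on $(-\infty,0]$, $G_\varepsilon(r)=r^3/(6\varepsilon)$ on $[0,\varepsilon]$, and $G_\varepsilon(r)=\tfrac12 r^2-\tfrac\varepsilon2 r+\tfrac{\varepsilon^2}{6}$ on $[\varepsilon,\infty)$).

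\textbf{Step 1: smooth-in-time $u$.} First I would establish \eqref{eq:ibpWithMaxFunction} under the extra assumption $\tilde u:=\phi_{-(\cdot)}u(\cdot)\in C^1([0,T];H^1(\Omega_0))$, so that $\dot u\in C^0_{H^1}\subset L^2_{L^2}$. Writing $\int_{\Omega(t)}G_\varepsilon(u(t))=\int_{\Omega_0}G_\varepsilon(\tilde u(t))\,J^0_t$ and differentiating — which is legitimate since $G_\varepsilon\in C^1$ with bounded, Lipschitz derivative (so $v\mapsto G_\varepsilon(v)$ is $C^1$ from $L^2(\Omega_0)$ into $L^1(\Omega_0)$), since $\tilde u\in C^1([0,T];L^2(\Omega_0))$, and since $J^0_{(\cdot)}$ is $C^1$, bounded and satisfies $\tfrac{d}{dt}J^0_t=\phi_{-t}(\sgradt\cdot\mathbf w)\,J^0_t$ — yields that $t\mapsto\int_{\Omega(t)}G_\varepsilon(u(t))$ is $C^1$ with $\tfrac{d}{dt}\int_{\Omega(t)}G_\varepsilon(u(t))=\int_{\Omega(t)}G_\varepsilon'(u(t))\,\dot u(t)+\int_{\Omega(t)}G_\varepsilon(u(t))\,\sgradt\cdot\mathbf w$. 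Integrating over $[0,T]$ and letting $\varepsilon\to0$, Theorem~\ref{thm:DCT} (with dominating functions built from $|\dot u|$, $u^2$ and the constant bound on $\sgradt\cdot\mathbf w$, using that the surfaces are compact, hence of uniformly bounded measure) turns this into \eqref{eq:ibpWithMaxFunction} for such $u$; here $\dot u\in L^2_{L^2}$ and $u^+\in L^2_{H^1}$, so the $H^{-1}$-$H^1$ pairing on the left is just the $L^2_{L^2}$ inner product.

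\textbf{Step 2: density.} For general $u\in W(H^1,H^{-1})$ I would take $u_k$ with $\tilde u_k\in C^1([0,T];H^1(\Omega_0))$ and $u_k\to u$ in $W(H^1,H^{-1})$, obtained by pushing forward smooth approximations of $\tilde u$ in $\mathcal{W}(H^1(\Omega_0),H^{-1}(\Omega_0))$; since $\mathcal{W}(H^1(\Omega_0),H^{-1}(\Omega_0))\hookrightarrow C([0,T];L^2(\Omega_0))$ one also gets $u_k(0)\to u(0)$ and $u_k(T)\to u(T)$ in $L^2$. Because $s\mapsto s^+$ is $1$-Lipschitz on every $L^2$-space and on $L^2_{L^2}$, and using $\|a^2-b^2\|_{L^1}\le\|a-b\|_{L^2}\|a+b\|_{L^2}$, the boundary terms and the term $\int_0^T\!\int_{\Omega(t)}u_k^+(t)^2\sgradt\cdot\mathbf w$ in the identity of Step~1 applied to $u_k$ converge to the corresponding terms for $u$. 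For the left-hand side I would show $u_k^+\to u^+$ in $L^2_{H^1}$; combined with $\dot u_k\to\dot u$ in $L^2_{H^{-1}}$ this gives $\int_0^T\!\int_{\Omega(t)}\dot u_k(t)u_k^+(t)=\langle\dot u_k,u_k^+\rangle_{L^2_{H^{-1}},L^2_{H^1}}\to\langle\dot u,u^+\rangle_{L^2_{H^{-1}},L^2_{H^1}}=\int_0^T\langle\dot u(t),u^+(t)\rangle_{H^{-1}(\Omega(t)),H^1(\Omega(t))}$, and passing to the limit in Step~1 finishes the proof.

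\textbf{Main obstacle.} The one genuinely delicate point is $u_k^+\to u^+$ in $L^2_{H^1}$, since $s\mapsto s^+$ is continuous but not Lipschitz on $H^1$. I would pass to a subsequence along which $\tilde u_k(t)\to\tilde u(t)$ in $H^1(\Omega_0)$ for a.e.\ $t$ with $\|\tilde u_k(t)\|_{H^1(\Omega_0)}$ dominated by an $L^2(0,T)$ function, invoke continuity of $s\mapsto s^+$ on $H^1(\Omega_0)$ — which holds because $\nabla_{\Omega_0}v=0$ a.e.\ on $\{v=0\}$, so that $\chi_{\{v_k>0\}}\nabla_{\Omega_0}v_k\to\chi_{\{v>0\}}\nabla_{\Omega_0}v$ in $L^2$ by the generalised dominated convergence theorem — and use the pointwise contraction $\|(\tilde u_k(t))^+\|_{H^1(\Omega_0)}\le\|\tilde u_k(t)\|_{H^1(\Omega_0)}$ to dominate the integrand in time, concluding by dominated convergence and the usual subsequence argument. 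Everything else (density of smooth-in-time functions, the embedding into $C^0_{L^2}$, the elementary Jacobian identities) is standard, cf.\ \cite{AlpEllSti,AlpEllSti2}.
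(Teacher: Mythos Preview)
Your proposal is correct and follows the same density strategy as the paper: approximate $u$ by smoother-in-time functions, verify the identity for the approximants, and pass to the limit, with the only genuinely delicate step --- $u_k^+\to u^+$ in $L^2_{H^1}$ --- resolved identically in both arguments (via $\nabla v=0$ a.e.\ on $\{v=0\}$ and dominated convergence). The one methodological difference is in how the identity is justified for the approximants: the paper takes $u_n\in W(H^1,L^2)$ and simply asserts the chain rule $\partial^\bullet(u_n^+)=\dot u_n\chi_{\{u_n\ge0\}}\in L^2_{L^2}$, from which \eqref{eq:ibpWithMaxFunction} follows for $u_n$ by the standard transport identity; you instead work with the smoother class $\tilde u\in C^1([0,T];H^1(\Omega_0))$ and insert an additional $G_\varepsilon$-regularisation layer before letting $\varepsilon\to0$. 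Your route is slightly longer (two nested limits rather than one) but more self-contained, since it sidesteps the need to invoke a chain rule for $(\cdot)^+$ in $W(H^1,L^2)$.
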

\begin{proof}
By density, we can find $\{u_n\} \subset W(H^1, L^2)$ with $u_n \to u$ in $W(H^1, H^{-1})$. It follows that $\md(u_n^+) = \dot u_n\chi_{u_n \geq 0} \in L^2_{L^2}$ (this is sensible because $w \in H^1(\Omega)$ implies $w^+ \in H^1(\Omega)$) and therefore
%\begin{align*}
%\int_0^T \langle \dot u_n(t), u_n^+(t) \rangle_{H^{-1}(\Omega(t)), H^1(\Omega(t))} &= \int_0^T \int_{\Omega(t)}\md u_n^+(t)u_n^+(t) \\
%&= \int_0^T \frac{1}{2}\frac{d}{dt}\int_{\Omega(t)}u_n^+(t)^2 - \frac{1}{2}\int_0^T \int_{\Omega(t)}u_n^+(t)^2\sgrad \cdot \mathbf w\\
%&= \frac{1}{2}\int_{\Omega(T)}u_n^+(T)^2-\frac{1}{2}\int_{\Omega_0}u_n^+(0)^2 - \frac{1}{2}\int_0^T \int_{\Omega(t)}u_n^+(t)^2\sgrad \cdot \mathbf w,
%\end{align*}
%i.e., 
\eqref{eq:ibpWithMaxFunction} holds for $u_n$. Since $W(H^1, H^{-1}) \hookrightarrow C^0_{L^2}$, it follows that %$u_n(t) \to u(t)$ in $L^2(\Omega(t))$ for each $t$. This further implies that 
$u_n^+(t) \to u^+(t)$ in $L^2(\Omega(t))$ (for example see \cite[Lemma 2.88]{nonsmooth} or \cite[Lemma 1.22]{nonlinearpotential}). So we can pass to the limit in the first two terms on the right hand side. 

Now we just need to show that $u_n^+ \to u^+$ in $L^2_{H^1}$. %We have 
%\begin{align*}
%\norm{u_n^+-u^+}{L^2_{H^1}}^2 &= \int_0^T \int_{\Omega(t)}|u_n^+(t,x)-u^+(t,x)|^2 + |\sgrad u_n^+(t,x) - \sgrad u^+(t,x)|^2\\
%&\leq \int_0^T \int_{\Omega(t)}|u_n(t,x)-u(t,x)|^2 + |\sgrad u_n^+(t,x) - \sgrad u^+(t,x)|^2
%\end{align*}
It is easy to show the convergence in $L^2_{L^2}$, so we need only to check the convergence of the gradient. Let $g(r) = \chi_{\{r > 0\}}$. Then, using $g \leq 1$,
\begin{align*}
|\sgrad u_n^+(t,x) - \sgrad u^+(t,x)| %&= |g(u_n)\sgrad u_n(t,x) - g(u)\sgrad u(t,x)|\\
%&\leq |g(u_n)||\sgrad u_n(t,x)-\sgrad u(t,x)| + |g(u_n)-g(u)||\sgrad u(t,x)|\\
&\leq |\sgrad u_n(t,x)-\sgrad u(t,x)| + |g(u_n(t,x))-g(u(t,x))||\sgrad u(t,x)|.
\end{align*}
%and the integral of the first term is easy to handle. 
For the second term, let us note that since $u_n \to u$ in $L^2_{H^1}$, for almost all $t$, $u_n(t,x) \to u(t,x)$ almost everywhere in $\Omega(t)$ for a subsequence (which we have not relabelled). Let us fix $t$. Then for almost every $x \in \Omega(t)$, it follows that $g(u_n(t,x))\sgrad u(t,x) \to g(u(t,x))\sgrad u(t,x)$ pointwise. Because $g \leq 1$, the dominated convergence theorem gives overall $\sgrad u_n^+ \to \sgrad u^+$ in $L^2_{L^2}$.
\end{proof}
\subsection{Preliminary results}\label{sec:preliminaryResults}
\begin{remark}\label{lem:weakAndPointwiseSolutionLemma}It is well-known in the standard setting that a mushy region (the interior of the set where the temperature is zero) {can arise in the presence of heat sources \cite{Bertsch, ElliottOckendon}}; with no heat sources, the initial data may give rise to mushy regions. We will content ourselves with the following heuristic calculations under the assumption that there is no mushy region. 

Let the bounded weak solution of \eqref{eq:pde} (in the sense of Definition \ref{defn:boundedWeakSolution}) have the additional regularity $u \in W(H^1, L^2)$ and $\slap u \in L^2_{L^2},$ and suppose that the sets $\Omega_l(t) = \{ u > 0\}$ and $\Omega_s(t) = \{ u < 0\}$ divide $\Omega(t)$ with a common interface $\Gamma(t)$, which we assume is a sufficiently smooth $n-$dimensional hypersurface (of measure zero with respect to the surface measure on $\Omega(t)$). %\begin{align*}
%u &\in W(H^1, L^2),\\
%\slap u &\in L^2_{L^2},
%\end{align*}
Then the bounded weak solution is also a classical solution in the sense of \eqref{eq:stefanModel}. To see this, suppose that $(u,e)$ is a weak solution satisfying the equality in \eqref{eq:pdeWeakForm}. The integration by parts formula on each subdomain of $\Omega$ implies
%\begin{align*}
%\int_0^T \int_{\Omega_s(t)}\sgrad u(t) \sgrad \eta(t) &= - \int_0^T \int_{\Omega_s(t)}\eta (t)\slap u(t) + \int_0^T \int_{\Gamma(t)}\eta(t) \sgrad u_s(t) \cdot \mu \\
%\int_0^T \int_{\Omega_l(t)}\sgrad u(t) \sgrad \eta(t) &= - \int_0^T \int_{\Omega_l(t)}\eta (t)\slap u(t) - \int_0^T %\int_{\Gamma(t)}\eta(t) \sgrad u_l(t) \cdot \mu,
%\end{align*}
%which we add together to get
\begin{align}
\int_0^T \int_{\Omega(t)}\sgrad u(t) \sgrad \eta(t) &= - \int_0^T \int_{\Omega(t)}\eta (t)\slap u(t) + \int_0^T \int_{\Gamma(t)}\eta(t) (\sgrad u_s(t) - \sgrad u_l(t)) \cdot \mu.\label{eq:15}
\end{align}
With $e(t)\eta(t)\sgrad \cdot \mathbf w =\sgrad \cdot(e(t)\eta(t)\mathbf w) - \mathbf w \cdot \sgrad (e(t)\eta(t))$ and the divergence theorem \cite[\S 2.2]{DziukElliottL2},
\begin{align*}
\int_0^T \int_{\Omega_s(t)}e(t)\eta(t)\sgrad \cdot \mathbf w %&=\int_0^T \int_{\Omega_s(t)}\sgrad \cdot(e(t)\eta(t)\mathbf w) - \int_0^T \int_{\Omega_s(t)}\mathbf w \cdot \sgrad (e(t)\eta(t))\\
&=\int_0^T \int_{\Gamma(t)}e(t)\eta(t)\mathbf w\cdot \mu + \int_0^T \int_{\Omega_s(t)}\mathbf w \cdot (e(t)\eta(t)\nu H - \sgrad (e(t)\eta(t))).
\end{align*}
We use this result in the formula for integration by parts over time over $\Omega_s$:
\begin{align*}
\int_0^T \int_{\Omega_s(t)}\dot \eta(t)e(t)&= \int_0^T \frac{d}{dt}\int_{\Omega_s(t)}e(t)\eta(t) - \int_0^T \int_{\Omega_s(t)}\dot e(t)\eta(t) -\int_0^T \int_{\Gamma(t)}e(t)\eta(t)\mathbf w\cdot \mu\\
&\quad - \int_0^T \int_{\Omega_s(t)}e(t)\eta(t)\mathbf w \cdot \nu H + \int_0^T \int_{\Omega_s(t)}\mathbf w \cdot \sgrad (e(t)\eta(t)).
\end{align*}
A similar expression over $\Omega_l$ can also be derived this way, the difference being that the term with $\mu$ has the opposite sign. Then, using $\dot e =\md (\mathcal{E}(u))  = \dot u,$ $e_s(t)|_{\Gamma(t)} = 0$, and $e_l(t)|_{\Gamma(t)} = 1,$ we get
\begin{align}
\nonumber \int_0^T \int_{\Omega(t)}\dot \eta(t)e(t) %&= \int_0^T \int_{\Omega_s(t)}\dot \eta(t)e(t)+ \int_0^T \int_{\Omega_l(t)}\dot \eta(t)e(t)\\
%\nonumber &= \int_0^T \frac{d}{dt}\int_{\Omega(t)}e(t)\eta(t) - \int_0^T \int_{\Omega(t)}\dot u(t)\eta(t) -\int_0^T \int_{\Gamma(t)}(e_s(t)-e_l(t))\eta(t)\mathbf w\cdot \mu\\
%\nonumber &\quad - \int_0^T \int_{\Omega(t)}e(t)\eta(t)\mathbf w \cdot \nu H + \int_0^T \int_{\Omega(t)}\mathbf w \cdot \sgrad (e(t)\eta(t))\\
\nonumber &= \int_0^T \frac{d}{dt}\int_{\Omega(t)}e(t)\eta(t) - \int_0^T \int_{\Omega(t)}\dot u(t)\eta(t) + \int_0^T \int_{\Gamma(t)}\eta(t)\mathbf w\cdot \mu\\
&\quad - \int_0^T \int_{\Omega(t)}e(t)\eta(t)\mathbf w \cdot \nu H + \int_0^T \int_{\Omega(t)}\mathbf w \cdot \sgrad (e(t)\eta(t))\label{eq:14}.
\end{align}
Since by the partial integration formula $\int_{\Omega(t)}\underline{D}_i(g) = \int_{\Omega(t)}gH\nu_i$, we have (with $g=\mathbf w_ie(t)\eta(t)$) that the fourth term in the right hand side of \eqref{eq:14} is
\begin{align*}
\int_{\Omega(t)}e(t)\eta(t)\mathbf w \cdot \nu H = \sum_i\int_{\Omega(t)}e(t)\eta(t)\mathbf w_i \nu_i H %= \sum_i\int_{\Omega(t)}\underline{D}_i(\mathbf w_i e(t)\eta(t)) = \int_{\Omega(t)}\sgrad \cdot (e(t)\eta(t)\mathbf w)\\
= \int_{\Omega(t)}\sgrad (e(t)\eta(t))\cdot \mathbf w+ \int_{\Omega(t)}\eta(t)e(t)\sgrad \cdot \mathbf w.
\end{align*}
So the calculation \eqref{eq:14} becomes
\begin{align}\label{eq:amal}
\int_0^T \int_{\Omega(t)}\dot \eta(t)e(t) &= \int_0^T \left(\frac{d}{dt}\int_{\Omega(t)}e(t)\eta(t) - \int_{\Omega(t)}(\dot u(t)\eta(t)+\eta(t)e(t)\sgrad \cdot \mathbf w) +  \int_{\Gamma(t)}\eta(t)\mathbf w\cdot \mu\right).
\end{align}
Now, taking the weak formulation \eqref{eq:pdeWeakForm} and substituting \eqref{eq:amal} together with the expression for the spatial term \eqref{eq:15}, we get for $\eta$ with $\eta(T)=\eta(0) = 0$
\begin{align*}
&\int_0^T \int_{\Omega(t)}f(t)\eta(t) = -\int_0^T \int_{\Omega(t)}\dot \eta(t)e(t) + \int_0^T \int_{\Omega(t)}\sgrad u(t) \sgrad \eta(t)\\
&= \int_0^T\int_{\Omega(t)}(\dot u(t) + e(t)\sgrad \cdot \mathbf w -\slap u(t))\eta (t) + \int_{\Gamma(t)}\eta(t) \left((\sgrad u_s(t) - \sgrad u_l(t)) \cdot \mu - (\mathbf w \cdot \mu)\right).
\end{align*}
Taking $\eta$ to be compactly supported in $Q_s$, and afterwards taking $\eta$ compactly supported in $Q_l$, we recover exactly the first two equations in \eqref{eq:stefanModel}.
%\begin{align*}
%\dot u(t) -\slap u(t) + u(t)\sgrad \cdot \mathbf w &= f(t) \quad \text{a.e. in $Q_s$}\\
%\dot u(t) -\slap u(t) + (u(t)+1)\sgrad \cdot \mathbf w &= f(t) \quad \text{a.e. in $Q_l$}
%\end{align*}
So we may drop the first integral on the left and the right hand side. Then with a careful choice of $\eta$, we will obtain
%\[(\sgrad u_s(t) - \sgrad u_l(t)) \cdot \mu - (\mathbf w \cdot \mu)=0\]
%which is 
precisely the interface condition in \eqref{eq:stefanModel}.
%The next lemma is vital for the continuous dependence of (bounded) weak solutions.
\end{remark}
\begin{lem}\label{lem:auxiliaryPDE}
Given $\xi \in C^1(\Omega_0)$ and $\tilde \alpha \in C^2([0,T]\times \Omega_0)$ satisfying $0 < \epsilon \leq \alpha \leq \alpha_0$ a.e., there exists a unique solution $\varphi \in W(H^1, L^2)$ with $\slap \varphi \in L^2_{L^2}$ to
\begin{align}
\dot \varphi - \alpha(x,t)\slap \varphi &= 0\label{eq:auxiliaryPDE}\\
\nonumber \varphi(x,0)&= \xi(x)
\end{align}
satisfying $\norm{\varphi}{L^\infty_{L^\infty}} \leq \norm{\xi}{L^\infty(\Omega_0)}$ and (cf. \cite[Chapter V, \S 9]{lady})
\begin{align}
\int_0^t \int_{\Omega(\tau)}(\dot \varphi(\tau))^2+ \int_0^t \int_{\Omega(\tau)}\alpha|\slap \varphi|^2 +\int_{\Omega(t)}|\sgrad \varphi(t)|^2&\leq (1+\alpha_0)(1+{e^{2C_{\mathbf w}(1+\alpha_0)t}})\int_{\Omega_0}|\sgrad \xi|^2\label{eq:auxiliaryPDEInequality}.
\end{align}
%and if $\alpha_0 = 1+\epsilon$,
%\begin{align*}
%\int_0^t \int_{\Omega(\tau)}(\dot \varphi(\tau))^2+ \int_0^t \int_{\Omega(\tau)}\alpha|\slap \varphi|^2 +\int_{\Omega(t)}|\sgrad \varphi(t)|^2&\leq (2+\epsilon)(1+{e^{2C_w(2+\epsilon)t}})\int_{\Omega_0}|\sgrad \xi|^2.
%\end{align*}
\end{lem}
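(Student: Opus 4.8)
The plan is to obtain existence and uniqueness by transporting \eqref{eq:auxiliaryPDE} to the fixed surface $\Omega_0$ and invoking classical linear parabolic theory, and then to establish the two quantitative bounds directly on the resulting solution. Throughout, $\alpha$ denotes the pushforward $\phi_{(\cdot)}\tilde\alpha$, which is uniformly $C^2$ on each $\Omega(t)$ since $\tilde\alpha\in C^2([0,T]\times\Omega_0)$ and $\Phi$ is $C^3$ with uniform bounds; in particular $\sgrad\alpha$ and $\slap\alpha$ are uniformly bounded.

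\textbf{Existence and uniqueness.} Setting $\tilde\vp(t)=\phi_{-t}\vp(t)$, the material derivative becomes the ordinary time derivative, $\phi_{-t}\dot\vp(t)=\partial_t\tilde\vp(t)$, while $\phi_{-t}(\slap\vp(t))=\mathcal A(t)\tilde\vp(t)$ where $\mathcal A(t):=\phi_{-t}\slap\phi_t$ is a second order operator on $\Omega_0$; using the $C^3$-regularity of $\Phi$ and the uniform bounds on $\mathbf D\Phi^0_t$ and $J^0_t$ from \S\ref{sec:prelims}, $\mathcal A(t)$ is uniformly elliptic with coefficients that are uniformly bounded and sufficiently regular in $(x,t)$. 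Hence \eqref{eq:auxiliaryPDE} is equivalent to the non-autonomous parabolic problem $\partial_t\tilde\vp-\tilde\alpha(x,t)\mathcal A(t)\tilde\vp=0$ on the closed $C^3$ manifold $\Omega_0$, with $0<\epsilon\le\tilde\alpha\le\alpha_0$, $\tilde\alpha\in C^2$ and $\xi\in C^1(\Omega_0)\subset H^1(\Omega_0)$, for which \cite[Ch. V, \S 9]{lady} provides a unique $\tilde\vp\in L^2(0,T;H^2(\Omega_0))\cap H^1(0,T;L^2(\Omega_0))\cap C^0([0,T];H^1(\Omega_0))$. Transporting back with $\phi_{(\cdot)}$ and using Lemma \ref{lem:pushforwardIso} together with the isomorphism of $W(H^1,L^2)$ with its pullback gives $\vp\in W(H^1,L^2)$; moreover, since the equation forces $\slap\vp(t)=\alpha(t)^{-1}\dot\vp(t)$ in $L^2(\Omega(t))$ for a.e.\ $t$ (here $\alpha\ge\epsilon>0$), we automatically obtain $\slap\vp\in L^2_{L^2}$. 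Uniqueness is inherited from the fixed-domain problem, or directly: the difference $\psi$ of two solutions satisfies $\dot\psi=\alpha\slap\psi$, $\psi(0)=0$, and testing with $\psi$, integrating by parts on $\Omega(t)$ and applying Gronwall forces $\psi\equiv0$.

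\textbf{The bound $\norm{\vp}{L^\infty_{L^\infty}}\le\norm{\xi}{L^\infty(\Omega_0)}$.} Write $M=\norm{\xi}{L^\infty(\Omega_0)}$ and $w=\vp-M$; since the constant $M$ has vanishing material derivative, $w\in W(H^1,L^2)\subset W(H^1,H^{-1})$ with $\dot w=\dot\vp=\alpha\slap\vp$ and $w^+(0)=(\xi-M)^+=0$. Applying Lemma \ref{lem:ibpWithMaxFunction} to $w$ on $[0,t]$, using the equation, integrating $\int_{\Omega(\tau)}\alpha(\slap\vp)w^+$ by parts on the closed surface $\Omega(\tau)$ and noting $\sgrad w^+=\sgrad\vp$ on $\{w>0\}$, one obtains the identity
\[\int_{\Omega(t)}(w^+(t))^2 + 2\int_0^t\int_{\Omega(\tau)}\alpha|\sgrad w^+|^2 = \int_0^t\int_{\Omega(\tau)}(w^+)^2\bigl(\slap\alpha+\sgrad\cdot\mathbf w\bigr),\]
whence $\int_{\Omega(t)}(w^+(t))^2\le C\int_0^t\int_{\Omega(\tau)}(w^+)^2$ with $C$ depending only on $\norm{\slap\alpha}{L^\infty_{L^\infty}}$ and the assumed bound on $\sgrad\cdot\mathbf w$. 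Since $w^+(0)=0$ and $\tau\mapsto\int_{\Omega(\tau)}(w^+(\tau))^2$ is continuous (as $W(H^1,L^2)\cts C^0_{L^2}$), Gronwall gives $w^+\equiv 0$, i.e.\ $\vp\le M$ a.e.; applying the same argument to $-\vp-M$ (which satisfies the same equation, since $\slap M=0$, with nonpositive initial datum) yields $\vp\ge -M$ a.e.

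\textbf{The energy estimate and the main obstacle.} Since $\slap\vp\in L^2_{L^2}$, testing \eqref{eq:auxiliaryPDE} with $-\slap\vp(t)\in L^2(\Omega(t))$ gives $\int_{\Omega(t)}\alpha|\slap\vp|^2=\int_{\Omega(t)}\dot\vp\slap\vp$ for a.e.\ $t$. Combining this with the transport formula for $t\mapsto\int_{\Omega(t)}|\sgrad\vp(t)|^2$ --- which at the present regularity reads $\frac{d}{dt}\int_{\Omega(t)}|\sgrad\vp|^2=-2\int_{\Omega(t)}\dot\vp\slap\vp+R(t)$ with $|R(t)|\le C_{\mathbf w}\int_{\Omega(t)}|\sgrad\vp|^2$, the remainder $R$ collecting the terms with $\sgrad\cdot\mathbf w$ and the $\md$--$\sgrad$ commutator --- yields
\[\frac{d}{dt}\int_{\Omega(t)}|\sgrad\vp(t)|^2+2\int_{\Omega(t)}\alpha|\slap\vp(t)|^2=R(t)\le C_{\mathbf w}\int_{\Omega(t)}|\sgrad\vp(t)|^2.\]
Dropping the nonnegative term and applying Gronwall bounds $\int_{\Omega(t)}|\sgrad\vp(t)|^2$ by $e^{C_{\mathbf w}t}\int_{\Omega_0}|\sgrad\xi|^2$; substituting back into the integrated identity bounds $\int_0^t\int_{\Omega(\tau)}\alpha|\slap\vp|^2$ by the same quantity, and finally $\int_0^t\int(\dot\vp)^2=\int_0^t\int\alpha^2|\slap\vp|^2\le\alpha_0\int_0^t\int\alpha|\slap\vp|^2$, so that adding the three contributions produces a bound of the required shape (the stated prefactor $1+\alpha_0$ and exponent $2C_{\mathbf w}(1+\alpha_0)$ leave ample room once the constants are tracked). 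The one delicate point is precisely the transport/commutator identity for $\int_{\Omega(t)}|\sgrad\vp(t)|^2$ at the borderline regularity $\dot\vp\in L^2_{L^2}$ (rather than $L^2_{H^1}$, where $\int\sgrad\vp\cdot\sgrad\dot\vp$ is literally meaningful); I would justify it either by quoting the corresponding result from \cite{AlpEllSti2}, or by a density argument, or --- most robustly --- by carrying the entire energy computation through a Galerkin approximation in the transported eigenbasis $\phi_t w_j$ of $-\slap_{\Omega_0}$ (for which $\md(\phi_t w_j)=0$, making the scheme tractable on the moving surface) and passing to the limit by weak lower semicontinuity. The remaining ingredients --- uniform ellipticity and regularity of the transported coefficients, and the bookkeeping of constants --- are routine.
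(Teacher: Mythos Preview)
Your proof is correct and close in spirit to the paper's, with two noteworthy differences. For existence, the paper stays on the moving surface: it writes the equation weakly via the bilinear form $a(t;\varphi,\eta)=\int_{\Omega(t)}\alpha\,\sgrad\varphi\,\sgrad\eta+\int_{\Omega(t)}\eta\,\sgrad\alpha\cdot\sgrad\varphi$, checks the structural hypotheses (boundedness, coercivity, the differentiability-in-time condition for the symmetric part $a_s$) and invokes its own abstract well-posedness result \cite[Theorem~3.13]{AlpEllSti} to get $\varphi\in W(H^1,L^2)$ directly; this is the ``intrinsic'' route the paper was set up to showcase, whereas your pullback-to-$\Omega_0$ argument relies on classical parabolic $L^2(H^2)\cap H^1(L^2)$ regularity on a fixed closed manifold. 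Both are valid; yours is more elementary but imports regularity theory from outside the paper's framework. For the $L^\infty$ bound, the paper handles the cross term $\int\sgrad\alpha\cdot\sgrad\varphi\,(\varphi-K)^+$ by Young's inequality (absorbing the gradient piece into the coercive term), while you integrate by parts once more to turn it into $\int(w^+)^2\slap\alpha$; your variant is slightly cleaner but needs the extra derivative $\slap\alpha\in L^\infty$, which the $C^2$ hypothesis on $\tilde\alpha$ indeed supplies. The energy estimate is essentially identical in both: multiply by $\slap\varphi$, face the same borderline-regularity obstacle ($\dot\varphi\notin L^2_{H^1}$ a priori), and resolve it by density --- the paper does this in a separate lemma via approximation by $C^\infty_{H^2}\subset W(H^2,H^1)$, which is exactly one of the options you list.
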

\begin{proof}%For the well-posedness,
%\paragraph{Well-posedness}
%The weak formulation is
%\begin{align*}
%\int_0^T\int_{\Omega(t)}\dot \varphi(t)\eta(t) + \int_0^T\int_{\Omega(t)}\alpha(x,t)\sgrad\varphi(t) \sgrad \eta(t) + \int_0^T\int_{\Omega(t)}\sgrad \alpha(x,t)\sgrad \varphi(t) \eta(t)&= 0
%\end{align*}
%for all $\eta \in L^2_{H^1}$.
%\paragraph{Well-posedness}
Define the bilinear form $a(t;\varphi, \eta) = \int_{\Omega(t)}\alpha(x,t)\sgrad\varphi \sgrad \eta + \int_{\Omega(t)}\sgrad \alpha(x,t)\sgrad \varphi \eta$
which is clearly bounded and coercive on $H^1(\Omega(t))$.
%\begin{align*}
%a(t;\eta, \eta) %&= \int_{\Omega(t)}\alpha(x,t)|\sgrad \eta|^2 + \int_{\Omega(t)}\sgrad \alpha(x,t)\sgrad \eta \eta\\
%&\geq \epsilon\norm{\sgrad \eta}{L^2(\Omega(t))}^2 - \norm{\sgrad \alpha}{L^\infty(\Omega(t))}\int_{\Omega(t)}|\sgrad \eta| |\eta|\\
%&\geq \epsilon\norm{\sgrad \eta}{L^2(\Omega(t))}^2 - \delta \norm{\sgrad \alpha}{L^\infty(\Omega(t))}\norm{\sgrad \eta}{L^2(\Omega(t))}^2 - C_\delta\norm{\eta}{L^2(\Omega(t))}^2
%\end{align*}
%provided we pick $\delta$ from Young's inequality small enough.
 Split $a(t;\cdot,\cdot)$ into the forms $a_s(t;\varphi, \eta) := \int_{\Omega(t)}\alpha(x,t)\sgrad\varphi \sgrad \eta$ and $a_n(t;\varphi,\eta) :=  \int_{\Omega(t)}\sgrad \alpha(x,t)\sgrad \varphi \eta.$
One sees that $a_s(t;\eta,\eta) \geq 0$ and that both %$|a_n(t,\varphi, \eta)| \leq C\norm{\sgrad \varphi}{L^2(\Omega_0)}\norm{\eta}{L^2(\Omega_0)}$
$a_n(t;\cdot,\cdot)\colon H^1(\Omega(t)) \times L^2(\Omega(t)) \to \mathbb{R}$ and $a_s(t;\cdot,\cdot)\colon H^1(\Omega(t)) \times H^1(\Omega(t)) \to \mathbb{R}$ are bounded. Also, letting $\chi_j^t := \phi_t \chi_j^0$ where $\chi_j^0$ are the normalised eigenfunctions of $-\Delta_{\Omega_0}$, we have for $\eta \in \tilde C^1_{H^1} :=  \{u \mid u(t) = \sum_{j=1}^m \alpha_j(t)\chi_j^t, \text{ $m \in \mathbb{N}$, $\alpha_j \in AC([0,T])$ and $\alpha_j' \in L^2(0,T)$}\},$
\begin{align*}
\frac{d}{dt}a_s(t;\eta(t), \eta(t)) %&= \frac{d}{dt}\int_{\Omega(t)}\alpha(x,t)\sgrad\eta(t)\sgrad \eta(t)\\
%&= 2\int_{\Omega(t)}\alpha(t)\sgrad \dot \eta(t) \sgrad \eta(t) + \int_{\Omega(t)}\dot \alpha(t)|\sgrad \eta(t)|^2 + \alpha(t)\sgrad \cdot \mathbf{w} |\sgrad \eta(t)|^2 - 2D(\mathbf{w})\sgrad \eta(t) \sgrad \eta(t)\\
&= 2a_s(t;\dot \eta(t), \eta(t)) + r(t;\eta(t))
\end{align*}
where $r$ is such that $|r(t;\eta(t))| \leq C\norm{\eta(t)}{H^1(\Omega(t))}^2$ (see \cite[Lemma 2.1]{DziukElliottL2}, note that $\tilde \alpha \in C^1([0,T];C^1(\Omega_0))$ and thus $\alpha \in C^1_{H^1}$). Hence by \cite[Theorem 3.13]{AlpEllSti} we have the unique existence of $\varphi \in W(H^1, L^2)$. Rearranging the equation \eqref{eq:auxiliaryPDE} shows that $\alpha\slap \varphi \in L^2_{L^2}$. % Since then the equation holds in $L^2_{L^2}$, it holds pointwise a.e too.
 Since $\alpha$ is uniformly bounded by positive constants, it follows that $\slap \varphi \in L^2_{L^2}$.
\paragraph{The $L^\infty$ bound}Let $K := \norm{\xi}{L^\infty(\Omega_0)}$. Test the equation with $(\varphi-K)^+$:
%\begin{align*}
%0%&=\int_{\Omega(t)}\dot\varphi(t)(\varphi(t)-K)^+ - \int_{\Omega(t)}\alpha(t)\slap\varphi(t)(\varphi(t)-K)^+\\
%&=\frac{1}{2}\frac{d}{dt}\int_{\Omega(t)}(\varphi(t)-K)^+(\varphi(t)-K)^+ - \frac{1}{2}\int_{\Omega(t)}((\varphi(t)-K)^+)^2\sgrad \cdot \mathbf w + \int_{\Omega(t)}\sgrad(\alpha(t)(\varphi(t)-K)^+)\sgrad \varphi(t).
%&=\frac{1}{2}\frac{d}{dt}\norm{(\varphi(t)-K)^+}{L^2(\Omega(t))}^2 - \frac{1}{2}\int_{\Omega(t)}((\varphi(t)-K)^+)^2\sgrad \cdot \mathbf w + \int_{\Omega(t)}\sgrad\alpha(t)\sgrad \varphi(t)(\varphi(t)-K)^+\\
%&\quad + \int_{\Omega(t)}\alpha(t)\sgrad((\varphi(t)-K)^+)\sgrad \varphi(t).
%\end{align*}
%Rearranging gives
\begin{align*}
\frac{1}{2}\frac{d}{dt}\norm{(\varphi(t)-K)^+}{L^2(\Omega(t))}^2&  + \int_{\Omega(t)}\alpha(t)\sgrad((\varphi(t)-K)^+)\sgrad \varphi(t)\\
&\!\!\!\!= \frac{1}{2}\int_{\Omega(t)}((\varphi(t)-K)^+)^2\sgrad \cdot \mathbf w - \int_{\Omega(t)}\sgrad\alpha(t)\sgrad \varphi(t)(\varphi(t)-K)^+
\end{align*}
which becomes, through the use of Young's inequality with $\delta$,
%\begin{align*}
%\frac{1}{2}&\frac{d}{dt}\norm{(\varphi(t)-K)^+}{L^2(\Omega(t))}^2  + \epsilon\norm{\sgrad((\varphi(t)-K)^+)}{L^2(\Omega(t))}^2\\
%%&\leq \frac{\norm{\sgrad \cdot \mathbf w}{\infty}}{2}\norm{(\varphi(t)-K)^+}{L^2(\Omega(t)}^2 + \norm{\sgrad \alpha}{L^\infty}\int_{\Omega(t)}|\sgrad \varphi(t)(\varphi(t)-K)^+|\\
%&\leq \frac{\norm{\sgrad \cdot \mathbf w}{\infty}}{2}\norm{(\varphi(t)-K)^+}{L^2(\Omega(t)}^2 + \norm{\sgrad \alpha}{L^\infty}\int_{\Omega(t)}|\sgrad ((\varphi(t)-K)^+)(\varphi(t)-K)^+|
%%\tag{since $\sgrad \varphi((\varphi-K)^+) = \sgrad (\varphi-K)^+(\varphi-K)^+$}
%\\
%&\leq \frac{\norm{\sgrad \cdot \mathbf w}{\infty}}{2}\norm{(\varphi(t)-K)^+}{L^2(\Omega(t)}^2 + \norm{\sgrad \alpha}{L^\infty}(\delta\norm{\sgrad ((\varphi(t)-K)^+)}{L^2(\Omega(t))}^2 + C_\delta\norm{(\varphi(t)-K)^+}{L^2(\Omega(t))}^2)
%\end{align*}
%and with $\delta$ small enough we get
\begin{align*}
\frac{1}{2}\frac{d}{dt}\norm{(\varphi(t)-K)^+}{L^2(\Omega(t))}^2 %& + (\epsilon-\norm{\sgrad \alpha}{L^\infty}\delta)\norm{\sgrad((\varphi(t)-K)^+)}{L^2(\Omega(t))}^2\\
&\leq \left(\frac{C_{\mathbf w}}{2}+ \norm{\sgrad \alpha}{L^\infty}C_\delta\right)\norm{(\varphi(t)-K)^+}{L^2(\Omega(t))}^2.
\end{align*}
An application of Gronwall's inequality and noticing $(\varphi(0)-K)^+ = (\xi - \norm{\xi}{L^\infty})^+ = 0$ yields 
%\[\norm{(\varphi(t)-K)^+}{L^2(\Omega(t))}^2 \leq C(t)\norm{(\xi-K)^+}{L^2(\Omega_0)}^2.\]
%Since $K=\norm{\xi}{L^\infty}$, the right hand side is zero, hence 
$\varphi(t) \leq \norm{\xi}{L^\infty(\Omega_0)}$. Repeating this process with $(-\varphi(t)-K)^+$ allows us to conclude.% $\norm{\varphi}{L^\infty_{L^\infty}} \leq \norm{\xi}{L^\infty(\Omega_0)}$.
\paragraph{The inequality \eqref{eq:auxiliaryPDEInequality}}Multiplying the equation \eqref{eq:auxiliaryPDE} by $\slap \varphi$ and integrate: formally,
\begin{align}
\nonumber \int_0^t \int_{\Omega(\tau)}&\alpha|\slap \varphi|^2 %&= \int_0^t \int_{\Omega(\tau)}\dot \varphi \slap \varphi\\
= -\int_0^t \int_{\Omega(\tau)}\sgrad \dot \varphi \sgrad \varphi\\
\nonumber &=-\int_0^t \frac{1}{2}\frac{d}{d\tau}\int_{\Omega(\tau)}|\sgrad \varphi|^2 + \frac{1}{2}\int_0^t\int_{\Omega(\tau)}|\sgrad \varphi|^2\sgrad \cdot \mathbf w - \int_0^t \int_{\Omega(\tau)}D(\mathbf w)\sgrad \varphi \sgrad \varphi\\
&\leq \frac{1}{2}\int_{\Omega_0}|\sgrad \xi|^2 - \frac{1}{2}\int_{\Omega(t)}|\sgrad \varphi(t)|^2 + C_{\mathbf w}\int_0^t \int_{\Omega(\tau)}|\sgrad \varphi|^2.\label{eq:11}
\end{align}
See \cite[Lemma 2.1]{DziukElliottL2} or \cite{AlpEllSti2} for the definition of the matrix $D(\mathbf w)$. This calculation is merely formal because we have not shown that $\dot \varphi(t) \in H^1(\Omega(t))$; however the end result of the calculation is still valid by Lemma \ref{lem:density}. We also have by squaring \eqref{eq:auxiliaryPDE}, integrating and using \eqref{eq:11}:
\begin{align*}
\int_0^t \int_{\Omega(\tau)}(\dot \varphi(\tau))^2% &= \int_0^t \int_{\Omega(\tau)}\alpha^2(\slap \varphi)^2\\ 
&\leq \alpha_0\int_0^t \int_{\Omega(\tau)}\alpha(\slap \varphi)^2 \leq \frac{\alpha_0}{2}\int_{\Omega_0}|\sgrad \xi|^2 + \alpha_0C_{\mathbf w}\int_0^t \int_{\Omega(\tau)}|\sgrad \varphi|^2.
\end{align*}
Adding the last two inequalities then we obtain
\begin{align*}%\label{eq:aux1}
\int_0^t \int_{\Omega(\tau)}(\dot \varphi(\tau))^2+ \int_0^t \int_{\Omega(\tau)}\alpha|\slap \varphi|^2 + \frac{1}{2}\int_{\Omega(t)}|\sgrad \varphi(t)|^2&\leq \frac{1+\alpha_0}{2}\int_{\Omega_0}|\sgrad \xi|^2\\
&\quad  + C_{\mathbf w}(1+\alpha_0)\int_0^t \int_{\Omega(\tau)}|\sgrad \varphi|^2 .
\end{align*}
Gronwall's inequality can be used to deal with the last term on the right hand side.
\end{proof}
\begin{lem}\label{lem:density}
With $\varphi \in W(H^1,L^2)$ from the previous lemma, the following inequality holds:
\begin{align}
\int_0^t \int_{\Omega(\tau)}\alpha|\slap \varphi|^2 
&\leq \frac{1}{2}\int_{\Omega_0}|\sgrad \xi|^2 - \frac{1}{2}\int_{\Omega(t)}|\sgrad \varphi(t)|^2 + C_{\mathbf w}\int_0^t \int_{\Omega(\tau)}|\sgrad \varphi|^2.\label{eq:densityEquation}
\end{align}
\end{lem}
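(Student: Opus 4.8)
The plan is to make the formal computation \eqref{eq:11} rigorous by a density argument (hence the name of the lemma). In \eqref{eq:11} the only non-rigorous steps are the spatial integration by parts $\int_{\Omega(\tau)}\dot\varphi\,\slap\varphi = -\int_{\Omega(\tau)}\sgrad\dot\varphi\,\sgrad\varphi$, which requires $\dot\varphi(\tau)\in H^1(\Omega(\tau))$, and the Leibniz (transport) formula for $\tau\mapsto\int_{\Omega(\tau)}|\sgrad\varphi(\tau)|^2$. Both are legitimate on the space $\tilde C^1_{H^1}$ introduced in the proof of Lemma \ref{lem:auxiliaryPDE}: each $\chi_j^t=\phi_t\chi_j^0$ is smooth on the $C^3$ surface $\Omega(t)$ and is annihilated by $\md$, so any $\psi\in\tilde C^1_{H^1}$ has $\dot\psi(t)\in H^1(\Omega(t))$ and $\slap\psi(t)\in L^2(\Omega(t))$, and the identity $\frac{d}{dt}\int_{\Omega(t)}|\sgrad\psi(t)|^2 = 2\int_{\Omega(t)}\sgrad\dot\psi\,\sgrad\psi + \int_{\Omega(t)}|\sgrad\psi|^2\sgrad\cdot\mathbf w - 2\int_{\Omega(t)}D(\mathbf w)\sgrad\psi\,\sgrad\psi$ holds --- this is the formula of \cite[Lemma 2.1]{DziukElliottL2} already invoked (with $\alpha\equiv1$) in the proof of Lemma \ref{lem:auxiliaryPDE}, now keeping the explicit remainder rather than the crude bound $|r(t;\cdot)|\le C\norm{\cdot}{H^1}^2$.

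The key step is the density claim: $\tilde C^1_{H^1}$ is dense in $\mathbb Z:=\{\psi\in W(H^1,L^2)\mid\slap\psi\in L^2_{L^2}\}$ for the graph norm $\norm{\psi}{W(H^1,L^2)}+\norm{\slap\psi}{L^2_{L^2}}$, with approximants convergent in $H^1(\Omega_0)$ at $t=0$. For the solution $\varphi$ at hand this is seen as follows: since $\slap\varphi\in L^2_{L^2}$, elliptic regularity on the closed surfaces $\Omega(t)$ gives $\varphi\in L^2_{H^2}$ (a well-defined space, as $\Phi^0_t$ is a $C^3$-diffeomorphism), so after pulling back by $\phi_{-(\cdot)}$ the task becomes approximating an element of $\mathcal W(H^1(\Omega_0),L^2(\Omega_0))\cap L^2(0,T;H^2(\Omega_0))$ by finite sums $\sum_{j=1}^m\alpha_j(t)\chi_j^0$ with smooth $\alpha_j$; this is done by spatial spectral truncation (the span of the $\chi_j^0$ being dense in $H^2(\Omega_0)$ and the spectral projections being uniformly bounded there and on $L^2(\Omega_0)$) followed by mollification in time, with the usual treatment near $t=0$ (extending constantly by $\xi$ for $t<0$ before mollifying).

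Granting the density, pick $\varphi_m\in\tilde C^1_{H^1}$ with $\varphi_m\to\varphi$ in $\mathbb Z$ and $\varphi_m(0)\to\xi$ in $H^1(\Omega_0)$, and put $g_m:=\dot\varphi_m-\alpha\slap\varphi_m\in L^2_{L^2}$; since $\varphi$ solves \eqref{eq:auxiliaryPDE}, $g_m\to0$ in $L^2_{L^2}$. Multiplying $\dot\varphi_m-\alpha\slap\varphi_m=g_m$ by $\slap\varphi_m$, integrating over $\Omega(\tau)\times(0,t)$, integrating by parts in space (no boundary term, $\Omega(\tau)$ being closed) and using the Leibniz formula to rewrite $\int_0^t\int_{\Omega(\tau)}\sgrad\dot\varphi_m\,\sgrad\varphi_m$ gives, rigorously,
\begin{align*}
\int_0^t\int_{\Omega(\tau)}\alpha|\slap\varphi_m|^2
&= \tfrac12\int_{\Omega_0}|\sgrad\varphi_m(0)|^2 - \tfrac12\int_{\Omega(t)}|\sgrad\varphi_m(t)|^2 + \tfrac12\int_0^t\int_{\Omega(\tau)}|\sgrad\varphi_m|^2\sgrad\cdot\mathbf w\\
&\quad - \int_0^t\int_{\Omega(\tau)}D(\mathbf w)\sgrad\varphi_m\,\sgrad\varphi_m - \int_0^t\int_{\Omega(\tau)}g_m\slap\varphi_m.
\end{align*}
Letting $m\to\infty$ along a subsequence for which $\sgrad\varphi_m(t)\to\sgrad\varphi(t)$ in $L^2(\Omega(t))$ for almost every $t$, every term converges (using $\slap\varphi_m\to\slap\varphi$, $\sgrad\varphi_m\to\sgrad\varphi$ and $g_m\to0$ in $L^2_{L^2}$, and $\sgrad\varphi_m(0)\to\sgrad\xi$ in $L^2(\Omega_0)$) and the $g_m$-term vanishes, yielding the corresponding identity for $\varphi$. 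Finally $\tfrac12\int|\sgrad\varphi|^2\sgrad\cdot\mathbf w - \int D(\mathbf w)\sgrad\varphi\,\sgrad\varphi\le C_{\mathbf w}\int|\sgrad\varphi|^2$ by the uniform bounds on $|\sgrad\cdot\mathbf w|$ and $|D(\mathbf w)|$, which gives \eqref{eq:densityEquation}.

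I expect the main obstacle to be the density claim of the second paragraph: neither $\md$ nor $\slap$ commutes with the pushforward $\phi_t$, so the spectral truncation (natural for $\slap_{\Omega_0}$) and the temporal mollification (natural for $\md$) interact through the $t$-dependent, merely $C^1$ coefficients of the pulled-back elliptic operator $\phi_{-t}\slap_{\Omega(t)}\phi_t$, and one must verify that the two approximations can be carried out simultaneously while keeping $\slap\varphi_m\to\slap\varphi$ in $L^2_{L^2}$. A secondary, more routine point is checking that the Leibniz formula for $\int_{\Omega(t)}|\sgrad\varphi_m(t)|^2$ is valid at the regularity of the approximants, which is essentially already done in the proof of Lemma \ref{lem:auxiliaryPDE}.
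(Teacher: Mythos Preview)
Your argument is correct but takes a harder route than the paper. The paper approximates not in $\tilde C^1_{H^1}$ but in $C^\infty_{H^2}:=\{\eta:\phi_{-(\cdot)}\eta(\cdot)\in C^\infty([0,T];H^2(\Omega_0))\}$, which makes the density step you flag as the main obstacle essentially trivial: since $\varphi\in W(H^2,L^2)$ (elliptic regularity, as you note), its pullback lies in $\mathcal W(H^2(\Omega_0),L^2(\Omega_0))$, where $C^\infty([0,T];H^2(\Omega_0))$ is dense by a standard result \cite[Lemma~II.5.10]{Boyer}; pushing forward by the $C^3$ flow gives $\varphi_n\to\varphi$ in $W(H^2,L^2)$ with no spectral truncation and no interaction between $\slap_{\Omega(t)}$ and $\phi_t$ to manage. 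Convergence $\slap\varphi_n\to\slap\varphi$ in $L^2_{L^2}$ is then automatic from $L^2_{H^2}$ convergence, and $\sgrad\varphi_n(t)\to\sgrad\varphi(t)$ holds for \emph{every} $t$ via $\mathcal W(H^2,L^2)\hookrightarrow C^0([0,T];H^1)$, avoiding your subsequence\slash a.e.-$t$ detour. Conversely, your device of tracking the residual $g_m=\dot\varphi_m-\alpha\slap\varphi_m\to 0$ is cleaner than what the paper writes: the paper asserts that the approximants satisfy \eqref{eq:densityEquation} directly, which is not literally true since $\varphi_n$ does not solve the PDE --- the intended meaning is that the \emph{identity} $\int_0^t\!\int\dot\varphi_n\slap\varphi_n=\tfrac12\int_{\Omega_0}|\sgrad\varphi_n(0)|^2-\tfrac12\int_{\Omega(t)}|\sgrad\varphi_n(t)|^2+(\text{remainder})$ holds for $\varphi_n\in W(H^2,H^1)$, one passes to the limit in that, and only then invokes $\dot\varphi=\alpha\slap\varphi$ on the left-hand side. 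Your formulation makes this explicit.
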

\begin{proof}
Let $C^\infty_{H^2} := \{\eta \mid \phi_{-(\cdot)}\eta(\cdot) \in C^\infty([0,T];H^2(\Omega_0))\}$. We start with a few preliminary results.
%\begin{enumerate}
%\item 
Let us show $C^\infty_{H^2} \subset W(H^2, H^1).$ Take $\eta \in C^\infty_{H^2}$ so that $\tilde \eta \in C^\infty([0,T];H^2(\Omega_0)) \subset \mathcal W(H^2, H^1)$. By smoothness of $\Phi^{(\cdot)}_0$, it follows that $\eta = \phi_{(\cdot)}\tilde \eta \in L^2_{H^2}$, and $\dot \eta = \md(\phi_{(\cdot)}\tilde \eta) = \phi_{(\cdot)}(\tilde \eta') \in L^2_{H^1}$ because $\tilde \eta' \in C^\infty([0,T];H^2(\Omega_0)) \subset L^2(0,T;H^1(\Omega_0))$. So $\eta \in W(H^2, H^1)$.

Let us also prove that 
%\begin{align*}
$C^\infty_{H^2} \subset W(H^2, L^2) \text{ is dense.}$
%\end{align*} 
Let $w \in W(H^2, L^2)$; then $\tilde w \in \mathcal{W}(H^2, L^2)$ since $\tilde w \in L^2(0,T;H^2(\Omega_0))$ by smoothness of $\Phi^{(\cdot)}_0$ and since $\tilde w' = \phi_{-(\cdot)}\dot w \in L^2(0,T;L^2(\Omega_0))$ (because $\dot w \in L^2_{L^2}$). By \cite[Lemma II.5.10]{Boyer} there exists $\tilde w_n \in C^\infty([0,T];H^2(\Omega_0))$ with $\tilde w_n \to \tilde w$ in $\mathcal{W}(H^2, L^2)$. Then, $w_n := \phi_{(\cdot)}\tilde w_n \in C^\infty_{H^2}$ (by definition) and
%\begin{align*}
\[\norm{w_n -  w}{W(H^2, L^2)}
%&= \norm{w_n -  w}{L^2_{H^2}} + \norm{\dot w_n -  \dot w}{L^2_{L^2}}\\
\leq C\left(\norm{\tilde w_n - \tilde w}{L^2(0,T;H^2(\Omega_0))}+ \norm{\tilde w_n'- \tilde w'}{L^2(0,T;L^2(\Omega_0))}\right) \to 0,\]
%\end{align*}
where we used the smoothness of $\Phi^{(\cdot)}_0$ and the reasoning behind Assumption 2.37 of \cite{AlpEllSti} (see also \cite[Theorem 2.33]{AlpEllSti}).% to get the penultimate line.
%\item Now let us prove that $W(H^2, L^2) \hookrightarrow C^0_{H^1}$. 
%\end{enumerate}

Given $\varphi \in W(H^2, L^2)$, by the density result, there exists $\varphi_n \in C^\infty_{H^2} \subset W(H^2, H^1)$ such that $\varphi_n \to \varphi$ in $W(H^2, L^2)$ with $\varphi_n$ satisfying \eqref{eq:densityEquation}:
\begin{align}
\int_0^t \int_{\Omega(\tau)}\alpha|\slap \varphi_n|^2 
&\leq \frac{1}{2}\int_{\Omega_0}|\sgrad \varphi_n(0)|^2 - \frac{1}{2}\int_{\Omega(t)}|\sgrad \varphi_n(t)|^2 + C_{\mathbf w}\int_0^t \int_{\Omega(\tau)}|\sgrad \varphi_n|^2.\label{eq:densityEquationN}
\end{align}
%that $\norm{\sgrad \varphi_n(t)}{L^2(\Omega(t))} \to \norm{\sgrad \varphi(t)}{L^2(\Omega(t))}$ for each $t$. 
We know that $\tilde \varphi_n \to \tilde \varphi$ in $\mathcal W(H^2, L^2)$ (this is just how we construct the sequence $\varphi_n$; see above), and $\mathcal W(H^2, L^2)  \hookrightarrow C^0([0,T];H^1(\Omega_0))$ \cite[Lemma II.5.14]{Boyer} implies $\varphi_n(t) \to \varphi(t)$ in $H^1(\Omega(t))$. %(because $\phi_t\colon H^1(\Omega_0) \to H^1(\Omega(t))$ is continuous). 
Now we can pass to the limit in every term in \eqref{eq:densityEquationN}.% which gives the desired result.
%Let us now prove that $W(H^1, L^2) \hookrightarrow C^0_{H^1}$. 
\end{proof}
\section{Well-posedness}
%Note that $\mathcal{E}$ is a maximal monotone operator on $\mathbb{R}$ (see \cite{chen} or Section 3 in \cite{RodriguesDarcy}). 
We can approximate $\mathcal{E}$ by $C^\infty$ bi-Lipschitz functions $\mathcal{E}_\epsilon$ such that (for example see \cite{RodriguesDarcy, RodriguesBook})
\begin{align}
\nonumber &\mathcal{E}_\epsilon \to \mathcal E \text{ uniformly in the compact subsets of $\mathbb{R} \backslash \{0\}$}\\
\nonumber &\mathcal{E}_\epsilon^{-1} \to \mathcal E^{-1} \text{ uniformly in the compact subsets of $\mathbb{R}$}\\%(see \cite[Proof of Theorem 4.1, p.~195]{RodriguesBook})
%\nonumber &\mathcal{E}_\epsilon \text{ is bi-Lipschitz}\\
\nonumber &\mathcal{E}_\epsilon(0) = 0\text{ and } \mathcal{E}_\epsilon = \mathcal{E} \text{ on }(-\infty, 0) \cup (\epsilon, \infty)\\
%\nonumber  &\mathcal{E}_\epsilon(r) = \begin{cases}
%r & \text{for $r \leq  0$}\\
%%\text{something} & \text{for $0 < r \leq \epsilon$}\\
%r+1 & \text{for $r > \epsilon$}
%\end{cases}\\
%\nonumber & 1\leq \mathcal{E}'_\epsilon(r) \leq 1+L_\epsilon\quad\text{for all $r \in \mathbb{R}$}\\
%\nonumber &\frac{1}{1+L_\epsilon} \leq (\mathcal{E}_\epsilon^{-1}(r))' \leq 1\quad\text{for all $r \in \mathbb{R}$} \\
\nonumber & 1\leq \mathcal{E}'_\epsilon(r) \leq 1+L_\epsilon\text{ and } ({1+L_\epsilon})^{-1} \leq (\mathcal{E}_\epsilon^{-1}(r))' \leq 1\quad\text{for all $r \in \mathbb{R}$} 
\end{align}
{(where $L_\epsilon=\mathcal{O}(1/\epsilon)$ is the Lipschitz constant of the approximation to the Heaviside function)}. %Note that the last property implies for all $r$, $s \in \mathbb R$,
%\begin{align}
%|\mathcal{E}_\epsilon^{-1}(r)-\mathcal{E}_\epsilon^{-1}(s)| &\leq |r-s|.%\label{eq:lipschitzPropertyOfEepsilonInverse}%\\
%%|\mathcal{E}_\epsilon^{-1}(r)| &\leq |r|\label{eq:boundednessOfEepsilonInverse}.
%\end{align}
We write $\mathcal{U} := \mathcal{E}^{-1}$ and $\mathcal{U}_\epsilon := \mathcal{E}^{-1}_\epsilon$. In order to prove Theorem \ref{thm:existenceLinfty}, that of the well-posedness of $L^\infty$ weak solutions given bounded data, we consider the following approximation of \eqref{eq:pde}.% using $\mathcal{E}_\epsilon$ and $\mathcal{U}_\epsilon$.%: find a $u_\epsilon$ satisfying
%\begin{equation}\label{eq:1}
%\begin{aligned}
%\md (\mathcal{E}_\epsilon(u_\epsilon(t))) - \slap u_\epsilon(t) + \mathcal{E}_\epsilon(u_\epsilon(t)) \sgrad \cdot \mathbf{w}&= f\\
%u_\epsilon(0) &= u_{0}.
%\end{aligned}
%\end{equation}
%Using $\mathcal{U}_\epsilon$, we can formulate the following.
\begin{defn}\label{defn:epsilonProblem}
%With $v_0 \in \mathcal{E}(u_0)$ and a sequence $v_{0\epsilon} \to v_0$ as $\epsilon \to 0$, 
Find for each $\epsilon > 0$ a function $e_\epsilon \in W(H^1, H^{-1})$ such that
\begin{equation}\label{eq:2}
\begin{aligned}
\md e_\epsilon- \slap (\mathcal{U}_\epsilon e_\epsilon) + e_\epsilon \sgrad \cdot \mathbf{w}&= f\quad\text{in $L^2_{H^{-1}}$}\\
e_\epsilon(0) &= e_{0}.
\end{aligned}\tag{$\mathbf{P}_\epsilon$}
\end{equation}
\end{defn}
\begin{theorem}Given $f \in L^2_{H^{-1}}$ and $e_0 \in L^2(\Omega_0)$, the problem \eqref{eq:2} has a weak solution $e_\epsilon \in W(H^1, H^{-1})$.
\end{theorem}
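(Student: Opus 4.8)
<br>

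The plan is to solve \eqref{eq:2} via a Galerkin approximation in the moving-domain setting, using the pulled-back problem on the fixed domain $\Omega_0$ together with the material-derivative transport structure, and then pass to the limit using energy estimates and the compactness afforded by Lemma \ref{lem:compactEmbedding}. Since $\mathcal{U}_\epsilon = \mathcal{E}_\epsilon^{-1}$ is Lipschitz with $({1+L_\epsilon})^{-1} \leq \mathcal{U}_\epsilon' \leq 1$, the operator $r \mapsto \mathcal{U}_\epsilon(r)$ is globally Lipschitz and monotone, so the spatial part $-\slap(\mathcal{U}_\epsilon(\cdot))$ behaves like a (non-uniformly) coercive quasilinear elliptic operator; the zeroth-order term $e_\epsilon \sgrad\cdot\mathbf w$ is a bounded perturbation because $|\sgrad\cdot\mathbf w| \leq C$.

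First I would set up the Galerkin basis: let $\{\chi_j^0\}$ be the eigenfunctions of $-\slap_{\Omega_0}$ and $\chi_j^t := \phi_t\chi_j^0$, and seek $e_\epsilon^m(t) = \sum_{j=1}^m a_j^m(t)\chi_j^t$ solving the finite-dimensional system obtained by testing \eqref{eq:2} against $\chi_k^t$; using the transport formula $\frac{d}{dt}\int_{\Omega(t)} e_\epsilon^m \chi_k^t = \langle \dot e_\epsilon^m, \chi_k^t\rangle + \langle \dot\chi_k^t, e_\epsilon^m\rangle + \int_{\Omega(t)} e_\epsilon^m\chi_k^t \sgrad\cdot\mathbf w$ and noting $\dot\chi_k^t = 0$ by construction, this becomes an ODE system for the $a_j^m$ with locally Lipschitz right-hand side (the nonlinearity enters through $\slap(\mathcal U_\epsilon(\sum a_j \chi_j^t))$ tested against $\chi_k^t$, which after integration by parts is $-\int_{\Omega(t)}\sgrad(\mathcal U_\epsilon(e_\epsilon^m))\cdot\sgrad\chi_k^t$), so Picard--Lindelöf gives a local solution, extended globally by the a priori bounds below. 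Next I would derive the energy estimates: testing with $e_\epsilon^m$ itself (legitimate since it lies in the span), the key computation is
\begin{align*}
\frac{1}{2}\frac{d}{dt}\norm{e_\epsilon^m(t)}{L^2(\Omega(t))}^2 + \int_{\Omega(t)}\mathcal U_\epsilon'(e_\epsilon^m)|\sgrad e_\epsilon^m|^2 &= \int_{\Omega(t)}f e_\epsilon^m - \int_{\Omega(t)}(e_\epsilon^m)^2\sgrad\cdot\mathbf w + \tfrac12\int_{\Omega(t)}(e_\epsilon^m)^2\sgrad\cdot\mathbf w,
\end{align*}
and since $\mathcal U_\epsilon' \geq ({1+L_\epsilon})^{-1} > 0$, together with Young's inequality and Gronwall this yields $e_\epsilon^m$ bounded in $L^\infty_{L^2}\cap L^2_{H^1}$ uniformly in $m$ (the bound depends on $\epsilon$ through $L_\epsilon$, which is fine). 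Then a comparison argument shows $\dot e_\epsilon^m$ is bounded in $L^2_{H^{-1}}$: for $\eta \in L^2_{H^1}$ with $\norm{\eta}{L^2_{H^1}}\leq 1$, split $\eta = \eta^m + (\eta - \eta^m)$ into its projection onto the span of $\{\chi_j^t\}_{j\le m}$ and the remainder, use the equation on $\eta^m$ and note the remainder pairs to zero with each term (here one uses that projection is bounded uniformly, an argument as in \cite{AlpEllSti}), and estimate $\langle \dot e_\epsilon^m, \eta^m\rangle$ by $\norm{\sgrad(\mathcal U_\epsilon(e_\epsilon^m))}{L^2_{L^2}} + \norm{e_\epsilon^m}{L^2_{L^2}} + \norm{f}{L^2_{H^{-1}}}$, all controlled.

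Finally I would pass to the limit: by the uniform bounds and reflexivity of $L^2_{H^1}$ and $L^2_{H^{-1}}$ (Theorem \ref{thm:dualSpaceIdentification}), extract a subsequence with $e_\epsilon^m \weaklyto e_\epsilon$ in $L^2_{H^1}$ and $\dot e_\epsilon^m \weaklyto \dot e_\epsilon$ in $L^2_{H^{-1}}$, hence $e_\epsilon^m \to e_\epsilon$ strongly in $L^2_{L^2}$ by Lemma \ref{lem:compactEmbedding}, and therefore (after a further subsequence) $e_\epsilon^m \to e_\epsilon$ a.e., so by Lipschitz continuity of $\mathcal U_\epsilon$ and the dominated convergence theorem for $L^p_{L^q}$ (Theorem \ref{thm:DCT}) $\mathcal U_\epsilon(e_\epsilon^m) \to \mathcal U_\epsilon(e_\epsilon)$ in $L^2_{L^2}$; combined with the weak $L^2_{H^1}$ bound on $\mathcal U_\epsilon(e_\epsilon^m)$ this gives $\sgrad(\mathcal U_\epsilon(e_\epsilon^m)) \weaklyto \sgrad(\mathcal U_\epsilon(e_\epsilon))$, which suffices to pass to the limit in the weak form for test functions in $\bigcup_m \mathrm{span}\{\chi_j^{(\cdot)}\}$ and then, by density, for all $\eta \in L^2_{H^1}$. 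The initial condition is recovered from $W(H^1,H^{-1}) \cts C^0_{L^2}$ and the convergence of $e_\epsilon^m(0) = P_m e_0 \to e_0$ in $L^2(\Omega_0)$. The main obstacle is the handling of the material derivative of the time-dependent Galerkin basis and the projection argument for the $\dot e_\epsilon^m$ bound in the moving-domain framework; this is exactly the content that the framework of \cite{AlpEllSti} (in particular the abstract existence theorem \cite[Theorem 3.13]{AlpEllSti}) is designed to streamline, and invoking it directly — with the bounded, monotone nonlinearity $\mathcal U_\epsilon$ playing the role of the principal part — may be the cleanest route, reducing the proof to verifying the structural hypotheses (boundedness, coercivity modulo lower-order terms, and the time-regularity of the forms) that were checked analogously in Lemma \ref{lem:auxiliaryPDE}.
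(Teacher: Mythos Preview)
Your Galerkin approach is correct in outline and would yield the result, but the paper takes a genuinely different route: it \emph{linearises} first by freezing the coefficient, writing $\sgrad(\mathcal U_\epsilon e) = \mathcal U_\epsilon'(e)\sgrad e$ and replacing $\mathcal U_\epsilon'(e)$ by $\mathcal U_\epsilon'(w)$ for a fixed $w\in W(H^1,H^{-1})$. The resulting \emph{linear} problem $\mathbf P(w)$ is solved by the off-the-shelf result \cite[Theorem~3.6]{AlpEllSti}, with a bound $\norm{Sw}{W(H^1,H^{-1})}\le C_*$ independent of $w$ (since $\mathcal U_\epsilon'$ is uniformly bounded below). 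One then applies the Schauder--Tikhonov fixed point theorem in the weak topology to $S$ on the closed bounded convex set $E=\{w:\ w(0)=e_0,\ \norm{w}{W(H^1,H^{-1})}\le C_*\}$; weak sequential continuity of $S$ comes from $W(H^1,H^{-1})\compact L^2_{L^2}$, which turns $w_n\weaklyto w$ into $w_n\to w$ in $L^2_{L^2}$ and hence (by dominated convergence, Theorem~\ref{thm:DCT}) $\mathcal U_\epsilon'(w_n)\sgrad\eta\to \mathcal U_\epsilon'(w)\sgrad\eta$ in $L^2_{L^2}$, enough to identify the limit of $Sw_n$.

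The trade-off: your direct Galerkin argument avoids the fixed point machinery but forces you to redo, in the evolving-space setting, exactly the delicate projection/time-derivative estimate you flag as the ``main obstacle''. The paper's linearise-then-fix-point strategy offloads all of that onto the already-established linear theory and reduces the nonlinear step to a short compactness/continuity verification. One caveat on your closing remark: invoking \cite[Theorem~3.13]{AlpEllSti} \emph{directly} on the nonlinear problem does not work as stated, since that result (like Theorem~3.6 there) is formulated for \emph{bilinear} forms $a(t;\cdot,\cdot)$, and a nonlinear principal part $\mathcal U_\epsilon$ does not fit its hypotheses --- which is precisely why the paper linearises first and then closes with a fixed point.
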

\begin{proof}
Using the chain rule on the nonlinear term leads us to consider for fixed $w \in W(H^1, H^{-1})$
\begin{equation}\label{eq:fpEquation}
\begin{aligned}
\langle \md(Sw), \eta \rangle_{L^2_{H^{-1}}, L^2_{H^1}} +(\mathcal{U}_\epsilon'(w)\sgrad (Sw),\sgrad \eta)_{L^2_{L^2}} + (Sw, \eta \sgrad \cdot \mathbf w)_{L^2_{L^2}} &= \langle f,\eta\rangle_{L^2_{H^{-1}}, L^2_{H^1}}\\
%\int_0^T \langle \md Sw(t), \eta(t) \rangle + \int_{\Omega(t)}\mathcal{U}_\epsilon'(w(t))\sgrad Sw(t)\cdot \sgrad \eta(t) + \int_{\Omega(t)} Sw(t) \eta(t)\sgrad \cdot \mathbf{w}  &= \int_0^T\int_{\Omega(t)}f(t)\eta(t)\\
Sw(0) &= e_{0}.
\end{aligned}\tag{$\mathbf{P}(w)$}
\end{equation}
If $S$ denotes the solution map of \eqref{eq:fpEquation} that takes $w \mapsto Sw$, then we seek a fixed point of $S$. %We want to use the Schauder--Tikhonov fixed point theorem (Theorem \ref{thm:Schauder}). 
First, note that since the bilinear form involving the surface gradients
%\[a(t;u,v) :=\int_{\Omega(t)}\mathcal{U}_\epsilon'(w(t))\sgrad u(t)\cdot \sgrad v(t) \]
is bounded and coercive, the solution $Sw \in W(H^1, H^{-1})$ of \eqref{eq:fpEquation} does indeed exist by \cite[Theorem 3.6]{AlpEllSti}, and moreover, it satisfies the estimate
\begin{equation}\label{eq:estimateFP1}
\norm{Sw}{W(H^1, H^{-1})} \leq C\left(\norm{f}{L^2_{H^{-1}}} + \norm{u_0}{L^2(\Omega_0)}\right) =: C_*
\end{equation}
where the constant $C$ does not depend on $w$ because $\mathcal{U}_\epsilon'(w(t))$ is uniformly bounded from below (in $w$). Then the set $E := \{ w \in W(H^1, H^{-1}) \mid w(0) = e_{0}, \norm{w}{W(H^1, H^{-1})} \leq C_*\},$ which is a closed, convex, and bounded subset of $X:=W(H^1, H^{-1})$, is such that $S(E) \subset E$ by \eqref{eq:estimateFP1}. %Also, because of the aforementioned properties of $E$ and because $X$ is reflexive, $E$ is weakly compact (see II.C.1 in \url{http://books.google.co.uk/books?id=grQ9GujyukMC&pg=PA49&lpg=PA49&dq=weakly+compact+set&source=bl&ots=qJIGnWdRWx&sig=9OtcOxkBFz5OjZ75oCAwyt6vPlw&hl=en&sa=X&ei=XoEfU-GaNcLwhQfgmYD4DQ&ved=0CC0Q6AEwADgK#v=onepage&q=weakly%20compact%20set&f=false}). 
%It seems to suffice (see \url{http://books.google.co.uk/books?id=peZHAAAAQBAJ&pg=PA65&lpg=PA65&dq=tikhonov+fixed+point+theorem+%22weakly+continuous%22&source=bl&ots=9kqwXB1B-E&sig=w6Nw0_lJxUH4E8Z1z7fTD3z7Kfk&hl=en&sa=X&ei=sGAgU_SIPM7A7Aa294CYCw&ved=0CEEQ6AEwBA#v=onepage&q=tikhonov%20fixed%20point%20theorem%20%22weakly%20continuous%22&f=false}) to show that $S \colon E \to E$ is weakly continuous. So let $w_n \weaklyto w$ in $E$. We have to show that $Sw_n \weaklyto Sw$.  
We now show that $S$ is weakly continuous. Let $w_n \weaklyto w$ in $W(H^1, H^{-1})$ with $w_n \in E$. 
From the estimate \eqref{eq:estimateFP1}, we know that $Sw_n$ is bounded in $W(H^1, H^{-1})$, so for a subsequence
\begin{align*}
Sw_{n_j} &\weaklyto \chi \quad \text{in $W(H^1, H^{-1})$}\\
Sw_{n_j} &\to \chi \quad \text{in $L^2_{L^2}$}%\\
%Sw_{n_{j_k}} &\to \chi \quad \text{a.e.}
\end{align*}
by the compact embedding of Lemma \ref{lem:compactEmbedding}. Now we show that $\chi = Sw$. Due to $W(H^1, H^{-1}) \cts C^0_{L^2}$, $Sw_{n_j} \weaklyto \chi$ in $C^0_{L^2}$. This implies $Sw_{n_j}(0) \weaklyto \chi(0)$ in $L^2(\Omega_0)$ (to see this consider for arbitrary $f \in L^2(\Omega_0)$ the functional $G \in (C^0_{L^2})^*$ defined $G(u_n) = \int_{\Omega_0}fu_n(0)$). Since $Sw_{n_j}(0) = e_{0}$, it follows that 
\begin{equation}\label{eq:6}
\chi(0) = e_{0}.
\end{equation}
On the other hand, since $w_n$ are weakly convergent in $W^1(H^1, H^{-1})$, they are bounded in the same space. Now, $W(H^1, H^{-1}) \compact L^2_{L^2}$, hence $w_{n} \to w$ in $L^2_{L^2}$. It follows that the subsequence $w_{n_j} \to w$ in $L^2_{L^2}$ too, and so there is a subsequence such that for almost every $t \in [0,T]$, $w_{n_{j_k}}(t) \to w(t)$ a.e. in $\Omega(t)$. By continuity, for a.a. $t$, $\mathcal{U}_\epsilon'(w_{n_{j_k}}(t))\sgrad \eta(t) \to \mathcal{U}_\epsilon'(w(t))\sgrad \eta(t)$ a.e., and also we have $|\mathcal{U}_\epsilon'(w_{n_{j_k}})\sgrad \eta| \leq |\sgrad \eta|$ with the right hand side in $L^2_{L^2}$. Thus we can use the dominated convergence theorem (Theorem \ref{thm:DCT}) which tells us that $\mathcal{U}_\epsilon'(w_{n_{j_k}})\sgrad \eta \to \mathcal{U}_\epsilon'(w)\sgrad \eta$ in $L^2_{L^2}$. Now we pass to the limit in the equation \eqref{eq:fpEquation} with $w$ replaced by $w_{n_{j_k}}$ to get
%\[\int_0^T \langle \md Sw_{n_{j_k}}(t), \eta(t) \rangle +  \int_{\Omega(t)}\mathcal{U}_\epsilon'(w_{n_{j_k}}(t))\sgrad Sw_{n_{j_k}}(t) \sgrad \eta(t) +  \int_{\Omega(t)}Sw_{n_{j_k}}(t) \eta(t)\sgrad \cdot \mathbf{w}  = \int_0^T\int_{\Omega(t)}f(t)\eta(t),
%\]
%for a subsequence $n_{j_k}$ in all terms to receive
\[\int_0^T \langle \md \chi(t), \eta(t) \rangle +  \int_{\Omega(t)}\mathcal{U}_\epsilon'(w(t))\sgrad \chi(t) \sgrad \eta(t) +  \int_{\Omega(t)}\chi(t) \eta(t)\sgrad \cdot \mathbf{w}  = \int_0^T\langle f(t), \eta(t)\rangle
\]
which, along with \eqref{eq:6}, shows that $\chi = Sw$, so $Sw_{n_j} \rightharpoonup Sw$.  However, we have to show that the whole sequence converges, not just a subsequence. Let $x_n = Sw_n$ and equip the space $X=W(H^1, H^{-1})$ with the weak topology. Let $x_{n_m}=Sw_{n_m}$ be a subsequence. By the bound of $S$, it follows that $x_{n_m}$ is bounded, hence it has a subsequence such that
\begin{align*}
x_{n_{m_l}} \weaklyto x^* \text{ in $X$} \qquad \text{and} \qquad x_{n_{m_l}} \to x^* \text{ in $L^2_{L^2}$}.%\\
\end{align*}
By similar reasoning as before, we identify $x^* = Sw$, and Theorem \ref{thm:topologicalSubsequencesResult} below tells us that indeed $x_n = Sw_n \weaklyto Sw$. Then by the Schauder--Tikhonov fixed point theorem \cite[Theorem 1.4, p.~118]{Etienne}, $S$ has a fixed point.
\end{proof}
\begin{theorem}\label{thm:topologicalSubsequencesResult}
Let $x_n$ be a sequence in a topological space $X$ such that every subsequence $x_{n_j}$ has a subsequence $x_{n_{j_k}}$ converging to $x\in X$. Then the full sequence $x_n$ converges to $x$.
\end{theorem}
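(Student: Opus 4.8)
The plan is to prove the statement by contradiction, working directly from the definition of convergence in a topological space. Suppose, for contradiction, that $x_n$ does not converge to $x$. By the definition of convergence, this means there is an open set $U \subseteq X$ with $x \in U$ such that $x_n \notin U$ for infinitely many indices $n$; that is, the set $N := \{ n \in \mathbb{N} \mid x_n \notin U \}$ is infinite.

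Since $N$ is infinite, I would enumerate its elements in increasing order $n_1 < n_2 < n_3 < \cdots$, which defines a subsequence $x_{n_j}$ with the property that $x_{n_j} \notin U$ for every $j$. Applying the hypothesis to this subsequence yields a further subsequence $x_{n_{j_k}}$ converging to $x$ in $X$. But then, since $U$ is an open neighbourhood of $x$, the definition of convergence provides an index $K$ such that $x_{n_{j_k}} \in U$ for all $k \geq K$, contradicting the fact that $x_{n_{j_k}} \notin U$ for every $k$ (as each $n_{j_k} \in N$). Hence $x_n \to x$.

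There is no serious obstacle here; the only point requiring care is the correct negation of the topological notion of convergence — namely that the failure of $x_n \to x$ furnishes a \emph{single} open neighbourhood of $x$ that is missed infinitely often (not merely eventually), which is precisely what is needed to extract the offending subsequence. Everything else is a direct application of the hypothesis together with the definition of a convergent sequence, and the argument makes no use of any separation or countability axioms on $X$.
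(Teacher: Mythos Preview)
Your proof is correct and is the standard argument for this well-known fact. The paper itself states the theorem without proof, treating it as a known result, so there is nothing further to compare.
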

\subsection{Uniform estimates}
We set $u_\epsilon = \mathcal{U}_\epsilon(e_\epsilon)$. Below we denote by $M$ a constant such that $\norm{u_0}{L^\infty(\Omega_0)} \leq M$.
% and $M := \norm{u_0}{L^\infty(\Omega_0)}$.
%We rewrite the above equation as
%\begin{equation}\label{eq:3}
%\begin{aligned}
%\md (\mathcal{E}_\epsilon(u_\epsilon(t)))- \slap u_\epsilon(t) + \mathcal{E}_\epsilon(u_\epsilon(t)) \sgrad \cdot \mathbf{w} &= f \qquad \text{in $L^2_{H^{-1}}$}\\
%u_\epsilon(0) &= u_{0}
%\end{aligned}
%\end{equation}
%\subsubsection{Estimates when $f \in L^\infty$}
\begin{lem}\label{lem:energyEstimateLinfty}
The following bound holds independent of $\epsilon$:
%\[\norm{u_\epsilon}{L^\infty_{L^\infty}} \leq C(T,M,\mathbf w, f).\]
%where \[C(T,M,\mathbf w, f) = e^{\norm{\sgrad \cdot \mathbf w}{\infty} T}\left(T\norm{f}{L^\infty_{L^\infty}} + (1+M)\right).\]
\begin{align*}
%\norm{u_\epsilon}{L^\infty_{L^\infty}} &\leq e^{\norm{\sgrad \cdot \mathbf w}{\infty} T}\left(T\norm{f}{L^\infty_{L^\infty}} +\norm{u_0}{L^\infty(\Omega_0)} + 1\right)\\
\norm{u_\epsilon}{L^\infty_{L^\infty}} + \norm{\mathcal{E}_\epsilon(u_\epsilon)}{L^\infty_{L^\infty}} &\leq 2e^{\norm{\sgrad \cdot \mathbf w}{\infty} T}\left(T\norm{f}{L^\infty_{L^\infty}} +\norm{u_0}{L^\infty(\Omega_0)} + 1\right) + 1.
\end{align*}
\end{lem}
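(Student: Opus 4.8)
The plan is to derive an $L^\infty$ bound for $u_\epsilon$ by a truncation (Stampacchia-type) argument applied to the equation \eqref{eq:2} rewritten in terms of $u_\epsilon$, and then to deduce the bound on $\mathcal{E}_\epsilon(u_\epsilon)=e_\epsilon$ from the properties $\mathcal{E}_\epsilon(0)=0$ and $1\le \mathcal{E}_\epsilon' \le 1+L_\epsilon$ together with the fact that $\mathcal{E}_\epsilon = \mathcal{E}$ outside $(0,\epsilon)$, so $|\mathcal{E}_\epsilon(r)| \le |r| + 1$ pointwise; this immediately gives $\norm{\mathcal{E}_\epsilon(u_\epsilon)}{L^\infty_{L^\infty}} \le \norm{u_\epsilon}{L^\infty_{L^\infty}} + 1$, which accounts for the additive $+1$ in the claimed estimate. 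So the real work is the bound on $u_\epsilon$.

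First I would set $K := T\norm{f}{L^\infty_{L^\infty}} + \norm{u_0}{L^\infty(\Omega_0)}$ (or a slightly larger constant, possibly with the exponential factor built in via a Gronwall step) and test the equation with $(u_\epsilon - K(t))^+$ where $K(t)$ is an appropriately chosen time-dependent level — e.g. $K(t) = e^{\norm{\sgrad\cdot\mathbf w}{\infty}t}(t\norm{f}{L^\infty_{L^\infty}} + M) $ or similar, engineered so that the forcing and the divergence term are absorbed. Since $e_\epsilon = \mathcal{E}_\epsilon(u_\epsilon)$ and $\mathcal{U}_\epsilon$ is increasing, one has $\md e_\epsilon = \mathcal{E}_\epsilon'(u_\epsilon)\md u_\epsilon$ formally, but it is cleaner to keep $e_\epsilon$ and use the transport formula from \S\ref{sec:prelims}: testing with $(u_\epsilon - K(t))^+ \in L^2_{H^1}$ (noting $u_\epsilon \in L^2_{H^1}$ since $\mathcal{U}_\epsilon$ is Lipschitz and $e_\epsilon \in W(H^1,H^{-1})$), the key point is that $\langle \md e_\epsilon, (u_\epsilon - K(t))^+\rangle$ controls a term like $\tfrac12 \tfrac{d}{dt}\int_{\Omega(t)} \Psi(u_\epsilon - K(t))$ for a suitable convex $\Psi$ built from $\mathcal{E}_\epsilon$, up to the lower-order divergence term and the $\dot K(t)$ contribution. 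The diffusion term $\int \mathcal{U}_\epsilon'(e_\epsilon)|\sgrad (u_\epsilon - K(t))^+|^2 \ge 0$ is discarded. Then the right-hand side $\int f (u_\epsilon - K(t))^+$ is estimated, and after choosing $\dot K(t)$ to dominate $\norm{f(t)}{L^\infty}$ and the Gronwall contribution from $\sgrad\cdot\mathbf w$, one concludes $(u_\epsilon - K(t))^+ = 0$ a.e., i.e. $u_\epsilon(t) \le K(t)$; repeating with $(-u_\epsilon - K(t))^+$ gives the two-sided bound. Supposing one wishes to avoid the delicate $\Psi$ bookkeeping, an alternative is to test with $(e_\epsilon - \mathcal{E}_\epsilon(K(t)))^+$ instead, exploiting that $u_\epsilon > K(t) \iff e_\epsilon > \mathcal{E}_\epsilon(K(t))$ by monotonicity, so that $\langle \md e_\epsilon, (e_\epsilon - \mathcal{E}_\epsilon(K(t)))^+\rangle$ directly produces $\tfrac12\tfrac{d}{dt}\int_{\Omega(t)}((e_\epsilon - \mathcal{E}_\epsilon(K(t)))^+)^2$ plus manageable terms, and the gradient term is still nonnegative because on $\{e_\epsilon > \mathcal{E}_\epsilon(K(t))\}$ one has $\sgrad u_\epsilon \cdot \sgrad (e_\epsilon - \mathcal{E}_\epsilon(K(t)))^+ = \mathcal{U}_\epsilon'(e_\epsilon)|\sgrad e_\epsilon|^2 \ge 0$ (after reabsorbing $\mathcal{E}_\epsilon(K(t))$ being a function of $t$ only). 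This second route is likely the one to write up, since it is closest to the computations already performed in Lemma \ref{lem:auxiliaryPDE}.

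The main obstacle will be the rigorous justification of the integration-by-parts / transport identity when testing with the truncated function: one must ensure $(u_\epsilon - K(t))^+$ (or $(e_\epsilon - \mathcal{E}_\epsilon(K(t)))^+$) is an admissible test function in $L^2_{H^1}$ with enough regularity for the chain rule on the material derivative to hold, and to correctly handle the $t$-dependence of the cutoff level $K(t)$ (producing the $-\dot K(t)\int_{\Omega(t)}(\cdots)^+$ term with the right sign), as well as the curvature/divergence term $\int (u_\epsilon - K(t))^+{}^2 \sgrad\cdot\mathbf w$ coming from the evolving-domain transport formula. A clean way to sidestep the first difficulty is to invoke Lemma \ref{lem:ibpWithMaxFunction} (applied to $u_\epsilon - K(t)$, or to a translate) which already packages the integration by parts with the positive-part function on the evolving surface; one then only needs to track the extra $\dot K$ and forcing contributions. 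Finally, the constant: choosing $K(t) = 2e^{\norm{\sgrad\cdot\mathbf w}{\infty}t}(t\norm{f}{L^\infty_{L^\infty}} + M + 1)$ (the factor $2$ and the $+1$ give slack to absorb Gronwall and the divergence term cleanly) and taking the essential supremum over $t \in [0,T]$ yields exactly the stated bound after adding the $+1$ from the $\mathcal{E}_\epsilon$ estimate.
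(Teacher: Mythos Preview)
Your plan is correct and follows essentially the same comparison-with-supersolution strategy as the paper: test the equation for $e_\epsilon$ against the positive part of $e_\epsilon$ minus a time-dependent level, discard the nonnegative gradient term, conclude via Lemma~\ref{lem:ibpWithMaxFunction} and Gronwall, and then use $|\mathcal{E}_\epsilon(r)| \le |r|+1$ for the second bound. The paper's only twist relative to your ``second route'' is that rather than building the exponential into $K(t)$, it first substitutes $w(t) = e^{-\lambda t}e_\epsilon(t)$ with $\lambda = \norm{\sgrad\cdot\mathbf w}{\infty}$ and compares $w$ with the \emph{linear} level $v(t) = t\norm{f}{L^\infty_{L^\infty}} + \norm{e_0}{L^\infty(\Omega_0)}$, so that the divergence contribution becomes $(\lambda + \sgrad\cdot\mathbf w)\,w\,(w-v)^+ \ge 0$ and can be dropped outright without a separate Gronwall step.
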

\begin{proof}
We substitute $w(t) = e^{-\lambda t}e_\epsilon(t)$ in \eqref{eq:2} %to get
%\begin{equation}\label{eq:4a}
%\md (e^{\lambda t}w(t)) - \slap (\mathcal{U}^{\epsilon}(e^{\lambda t}w(t)))+ e^{\lambda t}w(t)\sgrad \cdot \mathbf{w}= f(t)
%\end{equation}
and use $\md (e^{\lambda t}w(t)) = \lambda e^{\lambda t}w(t) + e^{\lambda t}\dot w(t)$ to get
\[\dot w(t) - e^{-\lambda t}\slap (\mathcal{U}^{\epsilon}(e^{\lambda t}w(t)))+\lambda w(t)+ w(t)\sgrad \cdot \mathbf{w}= e^{-\lambda t}f(t).\]
%(To see why we can divide through by $e^{\lambda t}$, we write the equality \eqref{eq:4a} in terms of the duality pairing with a test function $\eta$, then transform $\eta(t) \to e^{-\lambda t}\eta(t)$, which by smoothness of $e^{-\lambda t}$, lies in $L^2_{H^1}$.)
%Testing with $\eta$:% ({this is OK because the equality in $L^2_{H^{-1}}$ can be written as an equality for a.e. $t$}):
%\begin{align}
%\langle \dot w(t),\eta(t)\rangle_{\Vmt,\Vt} &+ \int_{\Omega(t)}e^{-\lambda t}\sgrad (\mathcal{U}^{\epsilon}(e^{\lambda t}w(t)))\sgrad \eta(t) +\int_{\Omega(t)}(\lambda+\sgrad \cdot \mathbf w) w(t)\eta(t)= e^{-\lambda t}\int_{\Omega(t)}f(t)\eta(t)\label{eq:5a}.
%\end{align}
Let $\alpha = \norm{f}{L^\infty_{L^\infty}}$ and $\beta = \norm{e_{0}}{L^\infty(\Omega_0)}$ and define $v(t) = \alpha t + \beta$. Note that $\dot v(t) = \alpha$ and $v(0) = \beta.$ %and $v$ satisfies $\langle \dot v(t), \eta(t) \rangle  = \int_{\Omega(t)}\alpha \eta(t).$ 
Subtracting $\dot v(t)$ from the above and testing with
%\begin{align}
%\nonumber \langle \dot w(t)-\dot v(t),\eta(t)\rangle_{\Vmt,\Vt} 
%+ \int_{\Omega(t)}e^{-\lambda t}\sgrad ((\mathcal{E}^{\epsilon})^{-1}(e^{\lambda t}w(t)))\sgrad \eta(t) &+\int_{\Omega(t)}(\lambda+\sgrad \cdot \mathbf w) w(t)\eta(t)\\&= \int_{\Omega(t)}(e^{-\lambda t}f(t)-\alpha) \eta(t).
%\end{align}
$(w(t)-v(t))^+$, we get
\begin{align}
\nonumber &\langle \dot w(t)-\dot v(t),(w(t)-v(t))^+\rangle_{H^{-1}(\Omega(t)),H^1(\Omega(t))} + \int_{\Omega(t)}e^{-\lambda t}\sgrad (\mathcal{U}^{\epsilon}(e^{\lambda t}w(t)))\sgrad (w(t)-v(t))^+ \\
&+\int_{\Omega(t)}(\lambda+\sgrad \cdot \mathbf w) w(t)(w(t)-v(t))^+= \int_{\Omega(t)}(e^{-\lambda t}f(t)-\alpha)(w(t)-v(t))^+\label{eq:5ab}.
\end{align}
Note that
%$\sgrad (\mathcal{U}_\epsilon(e^{\lambda t}w(t))) = \mathcal{U}_\epsilon'(e^{\lambda t}w(t))e^{\lambda t}\sgrad w(t)$, which implies that
$e^{-\lambda t}\sgrad (\mathcal{U}_\epsilon(e^{\lambda t}w(t)))\sgrad (w(t)-v(t))^+ %&=\int_{\Omega(t)}\mathcal{U}_\epsilon'(e^{\lambda t}w(t))\sgrad w(t)\sgrad (w(t)-v(t))^+\\
=\mathcal{U}_\epsilon'(e^{\lambda t}w(t))|\sgrad (w(t)-v(t))^+|^2
$ because $\sgrad v(t) = 0$.
Set $\lambda := \norm{\sgrad \cdot \mathbf w}{L^\infty}$, then the last term on the LHS of \eqref{eq:5ab} is non-negative because 
%\begin{align*}
%w(t)(w(t)-v(t))^+ = \begin{cases}
%w(t)(w(t)-v(t)) \geq 0 &\text{if $w(t) - v(t) > 0$}\\
%0&\text{if $w(t) - v(t) \leq 0$}
%\end{cases}
%\end{align*}
%because 
if $w > v$, $w > 0$ since $v \geq 0$. So we can throw away that and the gradient term to find
\begin{equation*}%\label{eq:mp1a}
\langle \dot w(t)- \dot v(t),(w(t)-v(t))^+\rangle_{H^{-1}(\Omega(t)),H^1(\Omega(t))} %+ \frac{1}{1+L_\epsilon}\int_{\Omega(t)}|\sgrad (w(t)-v(t))^+|^2  
\leq  \int_{\Omega(t)}(e^{-\lambda t}f(t)-\alpha)(w(t)-v(t))^+.
\end{equation*}
Integrating this and using Lemma \ref{lem:ibpWithMaxFunction}, we find
\begin{align*}
\frac{1}{2}\int_{\Omega(T)}((w(t)-v(t))^{+})^2   %+ \frac{1}{1+L_\epsilon}\int_0^T\int_{\Omega(t)}|\sgrad (w(t)-v(t))^+|^2\\
% &\leq  \int_0^T\int_{\Omega(t)}(e^{-\lambda t}f(t)-\alpha)(w(t)-v(t))^+ \\
% &\quad+\frac{1}{2}\int_0^T \int_{\Omega(t)}((w(t)-v(t))^+)^2\sgrad \cdot \mathbf w + \frac{1}{2}\int_{\Omega_0}((w(0)-v(0))^+)^2\\
&\leq  \frac{1}{2}\norm{\sgrad \cdot \mathbf w}{}\int_0^T \int_{\Omega(t)}((w(t)-v(t))^+)^2
\end{align*}
since $e^{-\lambda t}f(t)-\alpha = e^{-\lambda t}f(t) - \norm{f(t)}{L^\infty(\Omega(t))}\leq 0$ and $w(0)-v(0)=e_{0}-\norm{e_0}{L^\infty(\Omega_0)} \leq 0$. The use of Gronwall's inequality gives %us $\norm{(w(t)-v(t))^+}{L^2(\Omega(t))} = 0$ and
% old
%Note that
%\begin{align}
%\nonumber \frac{d}{dt}\int_{\Omega(t)}(w-v)^+(w-v)^+ &= 2\langle \md (w-v)^+, (w-v)^+ \rangle_{\Vmt, \Vt} + \int_{\Omega(t)}((w-v)^+)^2\sgrad \cdot \mathbf w\\
%&= 2\langle \md (w-v), (w-v)^+ \rangle_{\Vmt, \Vt} + \int_{\Omega(t)}((w-v)^+)^2\sgrad \cdot \mathbf w\label{eq:mp2}
%\end{align}
%because
%\begin{align*}
%\md (w-v)^+ = \begin{cases}
%\md (w-v) &: w-v > 0\\
%0&:w-v\leq 0
%\end{cases}
%\end{align*}
%which means that 
%\begin{align*}
%\langle \md (w-v)^+, (w-v)^+ \rangle_{\Vmt, \Vt} =
%\begin{cases}
%\langle \md(w-v), (w-v)^+ \rangle_{\Vmt, \Vt} &: w-v > 0\\
%0 = \langle \md (w-v), (w-v)^+ \rangle_{\Vmt, \Vt} &: w-v \leq 0\\
% \end{cases}
%\end{align*}
%with the last equality holding because $\langle \dot w, 0\rangle = 0$.
%So, using \eqref{eq:mp2} in \eqref{eq:mp1a} gives
%\begin{align*}
%&\frac{1}{2}\frac{d}{dt}\int_{\Omega(t)}((w(t)-v(t))^+)^2+ \frac{1}{1+L_\epsilon}%\int_{\Omega(t)}|\sgrad (w(t)-v(t))^+|^2\\
%&\leq  \frac{1}{2}\int_{\Omega(t)}((w(t)-v(t))^+)^2\sgrad \cdot \mathbf w+\int_{\Omega(t)}%(e^{-\lambda t}f(t)-\alpha)(w(t)-v(t))^+\\
%&\leq  \left(\norm{\sgrad \cdot \mathbf w}{L^\infty}\frac{1}{2}\right)\int_{\Omega(t)}((w(t)-%v(t))^+)^2 
%\end{align*}
%since $e^{-\lambda t}f(t)-\alpha = e^{-\lambda t}f(t) - \norm{f(t)}{L^\infty(\Omega(t))}\leq 0$. We can discount the second term on the left hand side and the use of Gronwall's inequality gives us
%\begin{align*}
%\norm{(w(t)-v(t))^+}{L^2(\Omega(t))}^2 &\leq C_{\mathbf w}\norm{(w(0)-v(0))^+}{L^2(\Omega_0)}%^2\\
%&= C_{\mathbf w}\norm{(e_{0}-\beta)^+}{L^2(\Omega_0)}^2\\
%&= 0
%\end{align*}
%this shows that 
$w(t) \leq T\norm{f}{L^\infty_{L^\infty}} + (1+M)$
almost everywhere on $\Omega(t)$. 
So we have shown that
%\begin{equation}\label{eq:null1}
{for all $t \in [0,T] \backslash N_1$, $w(t,x) \leq C$ for all $x \in \Omega(t)\backslash M^t_1$, where $\mu(N_1)=\mu(M^t_1) = 0.$}
%\end{equation}
A similar argument yields
%\begin{equation}\label{eq:null2}
for all $t \in [0,T] \backslash N_2$, $w(t,x) \geq -C$ for all $x \in \Omega(t)\backslash M^t_2$, where $\mu(N_2)=\mu(M^t_2) = 0.$
%\end{equation}
%Taking \eqref{eq:null1} and \eqref{eq:null2} 
Taking these statements together tells us that for all $t \in [0,T]\backslash N$, $|w(t,x)| \leq C$ on $\Omega(t)\backslash M^t$ where $N=N_1 \cup N_2$ and $M^t = M^t_1 \cup M^t_2$ have measure zero. This gives $\norm{w}{L^\infty_{L^\infty}} \leq T\norm{f}{L^\infty_{L^\infty}} + (1+M)$. 
%This gives $\norm{w(t)}{L^\infty(\Omega(t))} \leq T\alpha + \beta$ for every $t \in [0,T]$. 
From this and $u_\epsilon= \mathcal{U}_\epsilon(e^{\lambda (\cdot)}w(\cdot)) \leq e^{\lambda T}|w|$, we obtain the bound on $u_\epsilon$.
%\[\norm{u_\epsilon}{L^\infty_{L^\infty}} = \norm{\mathcal{U}_\epsilon(e^{\lambda (\cdot)}w(\cdot))}{L^\infty_{L^\infty}} %\leq \norm{e^{\lambda (\cdot)}w(\cdot)}{L^\infty_{L^\infty}} 
%\leq e^{\lambda T}\norm{w}{L^\infty_{L^\infty}} \leq e^{\norm{\sgrad \cdot \mathbf w}{\infty} T}\left(T\norm{f}{L^\infty_{L^\infty}} + (1+M)\right).\]
%by \eqref{eq:lipschitzPropertyOfEepsilonInverse}. 
The bound on $\mathcal{E}_\epsilon(u_\epsilon)$ follows from $\mathcal{E}_\epsilon(u_\epsilon) \leq 1 + |u_\epsilon|$.
\end{proof}
\begin{lem}
The following bound holds independent of $\epsilon$:
\begin{equation}\label{eq:boundOnGradientAndTimeDer}
\norm{\sgrad u_\epsilon}{L^2_{L^2}} + \norm{\md (\mathcal{E}_\epsilon u_\epsilon)}{L^2_{H^{-1}}}\leq C(T,\Omega,M,\mathbf w, f).
\end{equation}
%\[\norm{\md (\mathcal{E}_\epsilon u_\epsilon)}{L^2_{H^{-1}}} \leq C(T,\Omega,M,\mathbf w, f).\]
\end{lem}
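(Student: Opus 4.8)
The plan is to test the equation \eqref{eq:2} with $u_\epsilon = \mathcal{U}_\epsilon(e_\epsilon)$ itself, which is an admissible test function since $e_\epsilon \in W(H^1,H^{-1})$ and $\mathcal{U}_\epsilon$ is bi-Lipschitz with $\mathcal{U}_\epsilon(0)=0$ (so $u_\epsilon \in L^2_{H^1}$ with $\sgrad u_\epsilon = \mathcal{U}_\epsilon'(e_\epsilon)\sgrad e_\epsilon$). The key observation is the chain-rule/convexity identity: writing $\Psi_\epsilon(r) = \int_0^r \mathcal{U}_\epsilon(s)\,ds$ (a nonnegative, superlinear-at-most-quadratically convex function since $0 \le \mathcal{U}_\epsilon' \le 1$, so $0 \le \Psi_\epsilon(r) \le r^2/2$), the duality pairing $\langle \md e_\epsilon(t), u_\epsilon(t)\rangle$ equals $\langle \md e_\epsilon(t), \mathcal{U}_\epsilon(e_\epsilon(t))\rangle$, and by a transport-formula argument analogous to Lemma \ref{lem:ibpWithMaxFunction} (approximating $e_\epsilon$ by smoother functions and using that $\mathcal{U}_\epsilon$ is Lipschitz) this integrates over $(0,t)$ to
\[
\int_0^t \langle \md e_\epsilon(\tau), u_\epsilon(\tau)\rangle = \int_{\Omega(t)}\Psi_\epsilon(e_\epsilon(t)) - \int_{\Omega_0}\Psi_\epsilon(e_0) + \int_0^t\int_{\Omega(\tau)}\left(e_\epsilon \mathcal{U}_\epsilon(e_\epsilon) - \Psi_\epsilon(e_\epsilon)\right)\sgrad\cdot\mathbf w.
\]

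First I would assemble the resulting identity: the gradient term gives $\int_0^t\int_{\Omega(\tau)}\mathcal{U}_\epsilon'(e_\epsilon)|\sgrad u_\epsilon|^2 = \int_0^t\int_{\Omega(\tau)}|\sgrad u_\epsilon|^2$ — wait, more carefully, $\int \sgrad(\mathcal{U}_\epsilon e_\epsilon)\cdot\sgrad u_\epsilon = \int |\sgrad u_\epsilon|^2$ directly, which is exactly the good term we want. The remaining terms — the transport terms involving $\sgrad\cdot\mathbf w$, the lower-order term $\int e_\epsilon u_\epsilon \sgrad\cdot\mathbf w$, and the right-hand side $\int f u_\epsilon$ — are all controlled: using $|\sgrad\cdot\mathbf w|\le C_{\mathbf w}$, the $L^\infty_{L^\infty}$ bounds on $u_\epsilon$ and $\mathcal{E}_\epsilon(u_\epsilon) = e_\epsilon$ from Lemma \ref{lem:energyEstimateLinfty}, the assumption $|\Omega| < \infty$, and $f \in L^\infty_{L^\infty}$, one bounds each by a constant $C(T,\Omega,M,\mathbf w,f)$ (possibly after using $\int_{\Omega_0}\Psi_\epsilon(e_0) \le \tfrac12\|e_0\|_{L^2(\Omega_0)}^2 \le \tfrac12|\Omega_0|\,\|e_0\|_{L^\infty}^2$ and discarding the nonnegative $\int_{\Omega(t)}\Psi_\epsilon(e_\epsilon(t))$ term). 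This yields $\norm{\sgrad u_\epsilon}{L^2_{L^2}}^2 \le C(T,\Omega,M,\mathbf w,f)$.

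For the time-derivative bound, I would go back to \eqref{eq:2} and estimate $\md(\mathcal{E}_\epsilon u_\epsilon) = \md e_\epsilon = f - e_\epsilon\sgrad\cdot\mathbf w + \slap u_\epsilon$ in $L^2_{H^{-1}}$ directly: for $\eta \in L^2_{H^1}$, $\langle \md e_\epsilon, \eta\rangle = \int f\eta - \int e_\epsilon \eta \sgrad\cdot\mathbf w - \int \sgrad u_\epsilon\cdot\sgrad\eta$, and each term is bounded by $C\norm{\eta}{L^2_{H^1}}$ using the $L^2_{L^2}$ bounds on $f$ (from $L^\infty_{L^\infty}$ and $|\Omega|<\infty$), on $e_\epsilon$ (from Lemma \ref{lem:energyEstimateLinfty}), and the just-obtained bound on $\sgrad u_\epsilon$. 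Taking the supremum over $\norm{\eta}{L^2_{H^1}} \le 1$ gives the claimed bound on $\norm{\md(\mathcal{E}_\epsilon u_\epsilon)}{L^2_{H^{-1}}}$, and adding the two estimates completes the proof.

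The main obstacle is rigorously justifying the integration-by-parts/chain-rule identity for $\int_0^t\langle\md e_\epsilon, \mathcal{U}_\epsilon(e_\epsilon)\rangle$ on the evolving surface, since $\mathcal{U}_\epsilon(e_\epsilon)$ is only in $L^2_{H^1}$ (not smooth in time) and the transport term picks up the factor $\sgrad\cdot\mathbf w$; this must be done by approximating $e_\epsilon$ by functions in $W(H^1,L^2)$ (density), applying the chain rule there where $\md(\Psi_\epsilon(e_\epsilon)) = \mathcal{U}_\epsilon(e_\epsilon)\dot e_\epsilon$ makes classical sense together with the transport theorem $\tfrac{d}{dt}\int_{\Omega(t)}\Psi_\epsilon(e_\epsilon) = \int_{\Omega(t)}\md(\Psi_\epsilon(e_\epsilon)) + \int_{\Omega(t)}\Psi_\epsilon(e_\epsilon)\sgrad\cdot\mathbf w$, and then passing to the limit using the Lipschitz continuity of $\mathcal{U}_\epsilon$ and $\Psi_\epsilon$ and the embedding $W(H^1,H^{-1}) \cts C^0_{L^2}$ — exactly the pattern already used in the proof of Lemma \ref{lem:ibpWithMaxFunction}.
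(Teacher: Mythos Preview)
Your argument is correct, but the paper takes a slightly different and more economical route: instead of testing \eqref{eq:2} with $u_\epsilon = \mathcal{U}_\epsilon(e_\epsilon)$, it tests with $e_\epsilon = \mathcal{E}_\epsilon(u_\epsilon)$ itself. The time-derivative term then becomes the standard quadratic one $\langle \md e_\epsilon, e_\epsilon\rangle$, whose integration in time is already built into the evolving-space framework (the transport formula for $\tfrac{d}{dt}\norm{e_\epsilon(t)}{L^2(\Omega(t))}^2$), so no chain-rule identity for a primitive $\Psi_\epsilon$ and no separate density argument are needed. The price is that the gradient term is not exactly $\norm{\sgrad u_\epsilon}{L^2}^2$ but $\int \mathcal{E}_\epsilon'(u_\epsilon)|\sgrad u_\epsilon|^2$, which is handled by the pointwise bound $\mathcal{E}_\epsilon' \geq 1$. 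Your choice of test function gives the gradient term for free but shifts the work to the time-derivative term; both approaches lead to the same estimate, and the bound on $\md(\mathcal{E}_\epsilon u_\epsilon)$ in $L^2_{H^{-1}}$ is obtained identically in both, by duality from the equation.
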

\begin{proof}
Testing with $\mathcal{E}_\epsilon(u_\epsilon)$ in \eqref{eq:2}, %gives
%\[\langle \md (\mathcal{E}_\epsilon(u_\epsilon(t))), \mathcal{E}_\epsilon(u_\epsilon(t)) \rangle_{\Vmt, \Vt} + \int_{\Omega(t)} \sgrad u_\epsilon(t)\sgrad(\mathcal{E}_\epsilon(u_\epsilon(t))) + \int_{\Omega(t)}(\mathcal{E}_\epsilon(u_\epsilon(t)))^2 \sgrad \cdot \mathbf{w}= \int_{\Omega(t)}f(t)\mathcal{E}_\epsilon(u_\epsilon(t))\]
using $\sgrad u_\epsilon \sgrad(\mathcal{E}_\epsilon(u_\epsilon)) = (\mathcal{E}_\epsilon)'(u_\epsilon)|\sgrad u_\epsilon|^2 \geq |\sgrad u_\epsilon|^2$, %gives us
%\begin{equation*}
%\frac{1}{2}\frac{d}{dt}\norm{\mathcal{E}_\epsilon(u_\epsilon(t))}{L^2(\Omega(t))}^2+ \int_{\Omega(t)} |\sgrad u_\epsilon(t)|^2 + \frac{1}{2}\int_{\Omega(t)}(\mathcal{E}_\epsilon(u_\epsilon(t)))^2 \sgrad \cdot \mathbf{w}\leq \int_{\Omega(t)}f(t)\mathcal{E}_\epsilon(u_\epsilon(t))
%\end{equation*}
%Noting that, by the previous estimate,
%\begin{align*}
%\int_0^T\int_{\Omega(t)}(\mathcal{E}_\epsilon(u_\epsilon(t)))^2 \sgrad \cdot \mathbf{w} &\leq %C_1(T,M,\mathbf w, f)
%\end{align*}
%and
%\begin{align*}
%\int_0^T\int_{\Omega(t)}f(t)\mathcal{E}_\epsilon(u_\epsilon(t)) \leq \norm{f}{L^2_H}\norm{\mathcal{E}_\epsilon(u_\epsilon)}{L^2_H} \leq \norm{f}{L^2_H}C_2(T,M,\mathbf w, f),
%\end{align*}
integrating over time and using the previous estimate, we find
\begin{align*}
\frac{1}{2}\norm{\mathcal{E}_\epsilon(u_\epsilon(T))}{L^2(\Omega(T))}^2+ \int_0^T\int_{\Omega(t)} |\sgrad u_\epsilon(t)|^2 %&\leq \frac{1}{2}\norm{\mathcal{E}_\epsilon(u_{0})}{L^2(\Omega_0)}^2 + C_1(T,M,\mathbf w, f)\\
&\leq \frac{1}{2}(1+M)^2|\Omega_0| + C_1(T,M,\mathbf w, f).
\end{align*}
%\end{proof}
%We can also obtain a bound on the derivative. %$\norm{\md (\mathcal{E}_\epsilon u_\epsilon)}{L^2_{H^{-1}}}$; see Lemma \ref{lem:estimateOnDerivative} in the Appendix. 
%\begin{lem}\label{lem:estimateOnDerivative}The following bound holds independent of $\epsilon$:
%\end{lem}
%\begin{proof}
The bound on the time derivative follows by taking supremums.
\end{proof}
%The bounds imply for a subsequence (which we have not relabelled) the convergences
%\begin{equation*}
%\begin{aligned}
%u_\epsilon &\weaklyto u &&\text{in $L^2_{H^1}$}\\
%\mathcal{E}_\epsilon(u_\epsilon) &\weaklyto \chi &&\text{in %$L^2_{L^2}$}\\
%\md (\mathcal{E}_\epsilon(u_\epsilon)) &\weaklyto \dot \chi &&\text{in $L^2_{H^{-1}}$}
%\end{aligned}
%\end{equation*}
%This is not sufficient for concluding that $\chi = \mathcal{E}(u)$. Also, we remark that we cannot use Lions--Aubin here; the first convergence does not give us weak convergence of $\mathcal{E}_\epsilon(u_\epsilon)$ in $L^2_{H^1}$, and the last convergence does not give us weak convergence of $\dot u_\epsilon$ in $L^2_{H^{-1}}$; this is because we do not get bounds uniform in $\epsilon.$ So we need another \emph{a priori} estimate. 
\begin{lem}Define $\tilde u_\epsilon = \phi_{-(\cdot)}u_\epsilon$. The following limit holds uniformly in $\epsilon$:
\begin{align*}
\lim_{h \to 0}\int_0^{T-h}\int_{\Omega_0}|\tilde{u}_\epsilon(t+h) - \tilde{u}_\epsilon(t)| &= 0.
\end{align*}
\end{lem}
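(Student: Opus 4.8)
The plan is to establish an $\epsilon$-uniform version of the Kolmogorov–Riesz-type time-translation estimate that is standard in parabolic compactness arguments, adapted to the evolving setting via the pullback $\tilde u_\epsilon \in L^2(0,T;H^1(\Omega_0))$. First I would pull everything back to the fixed domain $\Omega_0$: write $\tilde e_\epsilon = \phi_{-(\cdot)}e_\epsilon$, $\tilde u_\epsilon = \phi_{-(\cdot)}u_\epsilon = \mathcal{U}_\epsilon(\tilde e_\epsilon)$ (using that $\mathcal{U}_\epsilon$ acts pointwise and commutes with $\phi_{-t}$), and transform the PDE \eqref{eq:2} into an equation on $\Omega_0$ of the form $\tilde e_\epsilon' - \widetilde{\Delta}(t)\tilde u_\epsilon + (\text{lower order}) = \tilde f$ in $L^2(0,T;H^{-1}(\Omega_0))$, where $\widetilde{\Delta}(t)$ is the pulled-back Laplace–Beltrami operator, a second-order elliptic operator with $C^1$-in-time coefficients, uniformly bounded and coercive thanks to the regularity of $\Phi$ and the uniform bound on $J^0_t$. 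The key $\epsilon$-uniform inputs are Lemma \ref{lem:energyEstimateLinfty} (so $\tilde u_\epsilon$ and $\tilde e_\epsilon$ are bounded in $L^\infty(0,T;L^\infty(\Omega_0))$, hence in $L^2$ of everything) and the previous lemma \eqref{eq:boundOnGradientAndTimeDer} (so $\sgrad u_\epsilon$ is bounded in $L^2_{L^2}$, equivalently $\tilde u_\epsilon$ bounded in $L^2(0,T;H^1(\Omega_0))$, and $\md(\mathcal{E}_\epsilon u_\epsilon)$ bounded in $L^2_{H^{-1}}$, i.e.\ $\tilde e_\epsilon'$ bounded in $L^2(0,T;H^{-1}(\Omega_0))$, all independently of $\epsilon$).

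Next I would run the standard computation: for $h>0$ and $t \in (0,T-h)$, test the difference of the equation at times $t+h$ and $t$ against $\tilde u_\epsilon(t+h) - \tilde u_\epsilon(t)$ and integrate in $t$ over $(0,T-h)$. The crucial algebraic point is monotonicity of $\mathcal{U}_\epsilon$: since $(\mathcal{U}_\epsilon)' \geq (1+L_\epsilon)^{-1} > 0$ pointwise, we have, for $r,s \in \mathbb R$,
\begin{equation*}
|r - s|\,|\mathcal{U}_\epsilon(r) - \mathcal{U}_\epsilon(s)| \geq (\mathcal{U}_\epsilon(r) - \mathcal{U}_\epsilon(s))^2,
\end{equation*}
and more usefully, writing $\tilde e_\epsilon(t+h) - \tilde e_\epsilon(t) = \int_t^{t+h}\tilde e_\epsilon'(\sigma)\,d\sigma$, the duality pairing of $\tilde e_\epsilon(t+h)-\tilde e_\epsilon(t)$ with $\tilde u_\epsilon(t+h)-\tilde u_\epsilon(t) = \mathcal{U}_\epsilon(\tilde e_\epsilon(t+h)) - \mathcal{U}_\epsilon(\tilde e_\epsilon(t))$ controls $\int_0^{T-h}\int_{\Omega_0}|\tilde e_\epsilon(t+h)-\tilde e_\epsilon(t)|\,|\tilde u_\epsilon(t+h)-\tilde u_\epsilon(t)|$ from above, and this in turn (again by $1 \le \mathcal{E}_\epsilon' $, equivalently the one-Lipschitz bound $(\mathcal{U}_\epsilon)'\le 1$) bounds $\int_0^{T-h}\int_{\Omega_0}|\tilde u_\epsilon(t+h)-\tilde u_\epsilon(t)|^2$. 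The right-hand side terms — the elliptic term $\int\int \widetilde\Delta(t)[\tilde u_\epsilon(t+h)-\tilde u_\epsilon(t)]\cdot(\cdots)$, the lower-order and source terms, and the time-jump of the coefficients of $\widetilde\Delta$ — are each estimated by Cauchy–Schwarz against the $\epsilon$-uniform bounds above, producing a bound of the shape
\begin{equation*}
\int_0^{T-h}\int_{\Omega_0}|\tilde u_\epsilon(t+h) - \tilde u_\epsilon(t)|^2 \leq C h
\end{equation*}
with $C$ independent of $\epsilon$ (here one uses that $\int_0^{T-h}\left\|\int_t^{t+h}\tilde e_\epsilon'(\sigma)d\sigma\right\|_{H^{-1}}\cdot \|\cdots\|_{H^1} \le h^{1/2}\|\tilde e_\epsilon'\|_{L^2 H^{-1}}\cdot(\text{uniform } L^2 H^1 \text{ bound})$ after a Fubini/Minkowski step). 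Then Cauchy–Schwarz in $(t,x)$ over the bounded set $(0,T-h)\times\Omega_0$ upgrades the $L^2$ estimate to the $L^1$ estimate $\int_0^{T-h}\int_{\Omega_0}|\tilde u_\epsilon(t+h)-\tilde u_\epsilon(t)| \le (T|\Omega_0|)^{1/2}(Ch)^{1/2}\to 0$ as $h\to 0$, uniformly in $\epsilon$.

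The main obstacle is the bookkeeping around the evolving geometry: one must check that pulling back \eqref{eq:2} genuinely yields an equation with the claimed structure and coefficient regularity — in particular that the transformed principal part is coercive uniformly in $t$ and that its time-dependence is Lipschitz (so its time-difference over $[t,t+h]$ is $O(h)$ in operator norm on $H^1\to H^{-1}$), all of which follows from the assumed $C^3$-diffeomorphism property of $\Phi$, the $C^2$ velocity field, and the uniform bounds on $J^0_t$ recorded in \S\ref{sec:absSpaces}. A secondary subtlety is handling the $H^{-1}$ duality: since $\tilde e_\epsilon' \in L^2(0,T;H^{-1}(\Omega_0))$ but not in $L^2(0,T;L^2(\Omega_0))$, the monotonicity trick must be phrased through the duality pairing (not a literal integral $\int\int|\cdot||\cdot|$), so one approximates or uses that $\tilde u_\epsilon(t+h)-\tilde u_\epsilon(t) \in H^1(\Omega_0)$ as a legitimate test function and that the pairing $\langle \tilde e_\epsilon(t+h)-\tilde e_\epsilon(t), \tilde u_\epsilon(t+h)-\tilde u_\epsilon(t)\rangle \ge c\|\tilde u_\epsilon(t+h)-\tilde u_\epsilon(t)\|_{L^2}^2$ via the elementary monotonicity inequality above, which holds because $\mathcal{U}_\epsilon$ is increasing with $(\mathcal{U}_\epsilon)'\le 1$. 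Everything else is routine Cauchy–Schwarz and Gronwall-free estimation.
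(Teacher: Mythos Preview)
Your approach is correct and in its core step coincides with the paper's: both start from
\[
\int_0^{T-h}\big(\mathcal E_\epsilon(\tilde u_\epsilon(t+h))-\mathcal E_\epsilon(\tilde u_\epsilon(t)),\,\tilde u_\epsilon(t+h)-\tilde u_\epsilon(t)\big)_{L^2(\Omega_0)}\,dt
= \int_0^{T-h}\!\!\int_t^{t+h}\!\big\langle (\tilde e_\epsilon)'(\sigma),\,\tilde u_\epsilon(t+h)-\tilde u_\epsilon(t)\big\rangle\,d\sigma\,dt
\le C\sqrt h,
\]
using only the uniform bounds $\|\tilde e_\epsilon'\|_{L^2 H^{-1}}+\|\tilde u_\epsilon\|_{L^2 H^1}\le C$. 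Where you diverge from the paper is in extracting the $L^1$ translation estimate from this: you invoke the pointwise inequality $(\mathcal E_\epsilon(a)-\mathcal E_\epsilon(b))(a-b)\ge(a-b)^2$ (from $\mathcal E_\epsilon'\ge 1$) to get $\int_0^{T-h}\|\tilde u_\epsilon(t+h)-\tilde u_\epsilon(t)\|_{L^2}^2\le C\sqrt h$, then Cauchy--Schwarz to reach $L^1$ (rate $h^{1/4}$). The paper instead uses the uniform equicontinuity of $\{\mathcal U_\epsilon\}$ and a $\delta$-level-set splitting. Your route is more elementary and yields an explicit rate; the paper's argument is more robust, since it would still work without the uniform lower bound $\mathcal E_\epsilon'\ge 1$, needing only uniform equicontinuity of the inverses.

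Two small clean-ups. First, your own parenthetical gives $h^{1/2}$, not $h$, so the displayed bound should read $C\sqrt h$; the conclusion is unaffected. Second, you do not need to pull back the full equation or estimate the elliptic and lower-order terms separately: the bound \eqref{eq:boundOnGradientAndTimeDer} already gives $\|\tilde e_\epsilon'\|_{L^2(0,T;H^{-1}(\Omega_0))}\le C$ directly (via the equivalence of norms for $W(H^1,H^{-1})$), so the geometry bookkeeping you flag as the ``main obstacle'' is entirely avoidable. Likewise, the duality subtlety you raise is not an issue here: $\tilde e_\epsilon(t+h)-\tilde e_\epsilon(t)\in L^\infty(\Omega_0)$ by Lemma~\ref{lem:energyEstimateLinfty}, so the pairing is the honest $L^2$ inner product and the pointwise monotonicity inequality applies without approximation.
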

\begin{proof}
We follow the proof of Theorem A.1 in \cite{Blanchard} here. Fix $h \in (0,T)$ and %let $t \in (0, T-h)$
consider
\begin{align}
\nonumber \int_0^{T-h}(&\mathcal{E}_\epsilon(\tilde u_\epsilon(t+h))-\mathcal{E}_\epsilon(\tilde{u}_\epsilon(t)), \tilde{u}_\epsilon(t+h) - \tilde{u}_\epsilon(t))_{L^2(\Omega_0)}\;\mathrm{d}t\\
\nonumber &= \int_0^{T-h}\int_t^{t+h}\frac{d}{d\tau}(\mathcal{E}_\epsilon(\tilde{u}_\epsilon(\tau)),\tilde{u}_\epsilon(t+h) - \tilde{u}_\epsilon(t))_{L^2(\Omega_0)}\;\mathrm{d}\tau\;\mathrm{d}t\\
%\nonumber &= \int_0^{T-h}\int_t^{t+h}\langle (\mathcal{E}_\epsilon(\tilde{u}_\epsilon(\tau)))',\tilde{u}_\epsilon(t+h) - \tilde{u}_\epsilon(t)\rangle_{H^{-1}(\Omega_0),H^1(\Omega_0)}\;\mathrm{d}\tau\;\mathrm{d}t\\
%\nonumber &\leq \int_0^{T-h}\left(\int_t^{t+h}\norm{(\mathcal{E}_\epsilon(\tilde{u}_\epsilon(\tau)))'}{H^{-1}(\Omega_0)}\;\mathrm{d}\tau\right)\norm{\tilde{u}_\epsilon(t+h) - \tilde{u}_\epsilon(t)}{H^{1}(\Omega_0)}\;\mathrm{d}t\\
%Now we use Cauchy-Schwarz
%\[\int_t^{t+h}\norm{(\mathcal{E}_\epsilon(\tilde{u}_\epsilon(\tau)))'}{\Vms}\;\mathrm{d}\tau \leq \sqrt{h}\norm{(\mathcal{E}_\epsilon(\tilde{u}_\epsilon))'}{L^2(t,t+h;\Vms)}\leq \sqrt{h}\norm{(\mathcal{E}_\epsilon(\tilde{u}_\epsilon))'}{L^2(0,T;\Vms)}\]
%and proceed:
%\begin{align}
%\nonumber \nonumber \int_0^{T-h}(&\mathcal{E}_\epsilon(\tilde u_\epsilon(t+h))-\mathcal{E}_\epsilon(\tilde{u}_\epsilon(t)),\tilde{u}_\epsilon(t+h) - \tilde{u}_\epsilon(t))_{L^2(\Omega_0)}\\
\nonumber &\leq \sqrt{h}\norm{(\mathcal{E}_\epsilon(\tilde{u}_\epsilon))'}{L^2(0,T;H^{-1}(\Omega_0))}\int_0^{T-h}(\norm{\tilde{u}_\epsilon(t+h)}{H^{1}(\Omega_0)} + \norm{\tilde{u}_\epsilon(t)}{H^{1}(\Omega_0)})\;\mathrm{d}t\\
\nonumber &\leq C_1(T,\Omega, M, \mathbf w, f)\sqrt{h}\norm{(\mathcal{E}_\epsilon(\tilde{u}_\epsilon))'}{L^2(0,T;H^{-1}(\Omega_0))}\tag{by the uniform estimates}\\
%\nonumber &\leq C_2(T,\Omega, M, \mathbf w, f)\sqrt{h}\norm{\md(\phi_t(\mathcal{E}_\epsilon(\tilde{u}_\epsilon)))}{L^2_{H^{-1}}}\\
\nonumber &\leq C_2(T,\Omega, M, \mathbf w, f)\sqrt{h}\norm{\md(\mathcal{E}_\epsilon({u}_\epsilon))}{L^2_{H^{-1}}}\tag{see the proof of Theorem 2.33 in \cite{AlpEllSti}}\\
&\leq C_3(T,\Omega, M, \mathbf w, f)\sqrt{h},\label{eq:5d}
\end{align}
with the last inequality by \eqref{eq:boundOnGradientAndTimeDer}. Now, since the $\mathcal{U}_\epsilon'$ are uniformly bounded above, they are uniformly equicontinuous.
%Now, since the $\mathcal{U}_\epsilon$ are continuous, they are uniformly continuous on the compact interval $[-Me^{\lambda T},Me^{\lambda T}]$ (the interval in which the $u_\epsilon$ lie). Also, the Lipschitz constant of $\mathcal{U}_\epsilon$ is 1, which does not depend on $\epsilon$, so the $\mathcal{U}_\epsilon$ are equicontinuous. 
Therefore, for fixed $\delta$, there is a $\sigma_\delta$ (depending solely on $\delta$) such that 
\begin{equation}\label{eq:equicts}
\text{if $|y-z| < \sigma_\delta$, then $|\mathcal{U}_\epsilon(y) - \mathcal{U}_\epsilon(z)|  < \delta$}\qquad\text{ for \emph{any} $\epsilon$}.
\end{equation}
So in the set $\{|\tilde{u}_\epsilon(t+h) - \tilde u_\epsilon(t)| > \delta\} = \{|\mathcal{U}_\epsilon(\mathcal{E}_\epsilon(\tilde{u}_\epsilon(t+h))) - \mathcal{U}_\epsilon(\mathcal{E}_\epsilon(\tilde u_\epsilon(t)))| > \delta\},$ we must have $|\mathcal{E}_\epsilon(\tilde u_\epsilon(t+h)) - \mathcal{E}_\epsilon(\tilde{u}_\epsilon(t))| \geq \sigma_\delta$ (this is the contrapositive of \eqref{eq:equicts}). This implies from \eqref{eq:5d} that
\begin{align*}
\int_0^{T-h}\int_{\Omega_0}|\tilde{u}_\epsilon(t+h) - \tilde{u}_\epsilon(t)|\chi_{\{|\tilde{u}_\epsilon(t+h) - \tilde u_\epsilon(t)| > \delta\}} &\leq \frac{C_3\sqrt{h}}{\sigma_\delta}.
\end{align*}
Writing $\text{Id} = \chi_{\{|\tilde{u}_\epsilon(t+h) - \tilde u_\epsilon(t)| > \delta\}}+\chi_{\{|\tilde{u}_\epsilon(t+h) - \tilde u_\epsilon(t)| \leq \delta\}}$, notice that 
\begin{align*}
\int_0^{T-h}\int_{\Omega_0}|\tilde{u}_\epsilon(t+h) - \tilde{u}_\epsilon(t)| &\leq \int_0^{T-h}\int_{\Omega_0}|\tilde{u}_\epsilon(t+h) - \tilde{u}_\epsilon(t)|\chi_{\{|\tilde{u}_\epsilon(t+h) - \tilde u_\epsilon(t)| > \delta\}} + \delta|\Omega_0|(T-h)\\
&\leq  \frac{C_3\sqrt{h}}{\sigma_\delta} + \delta|\Omega_0|T.
\end{align*}
Taking the limit as $h \to 0$, using the arbitrariness of $\delta > 0$ and the fact that the right hand side of the above does not depend on $\epsilon$ gives us the result.
\end{proof}
\subsection{Existence of bounded weak solutions}
With all the uniform estimates acquired, we can extract (weakly) convergent subsequences. In fact, we find (we have not relabelled subsequences)
\begin{equation}\label{eq:listOfConvergences}
\begin{aligned}
u_\epsilon &\to u &&\text{in $L^p_{L^q}$ for any $p$, $q \in [1,\infty)$}\\
\sgrad u_\epsilon &\weaklyto \sgrad u &&\text{in $L^2_{L^2}$}\\
\mathcal E_{\epsilon}(u_{\epsilon}) &\weaklyto \chi &&\text{in $L^2_{L^2}$}%\\
%\md(\mathcal E_{\epsilon}(u_{\epsilon})) &\weaklyto \dot \chi &&\text{in $L^2_{H^{-1}}$}%\\
%\text{a.a. $t \in [0,T]$}, u_\epsilon(t) &\to u(t) &&\text{a.e. in $\Omega(t).$}
\end{aligned}
\end{equation}
where only the first strong convergence listed requires an explanation. Indeed, the point is to apply \cite[Theorem 5]{simon} with $H^1(\Omega_0) \compact L^1(\Omega_0) \subset L^1(\Omega_0),$  %Since $u_\epsilon$ is bounded in $L^2_{H^1}$, $\tilde u_\epsilon$ is bounded in $L^1(0,T;H^1(\Omega_0))$. The estimate on the time differences of $\tilde u_{\epsilon}$ above shows that the second condition of the theorem is satisfied. Thus there is 
which gives us a subsequence $\tilde u_{\epsilon_j} \to \tilde \rho$ strongly in $L^1(0,T;L^1(\Omega_0))$. It follows that $u_{\epsilon_j} \to \rho$ in $L^1_{L^1}$, whence for a.a. $t$, $u_{\epsilon_{j_k}}(t) \to \rho(t)$ a.e. in $\Omega(t)$. We also know that for a.a. $t$, $|u_{\epsilon_{j_k}}(t)| \leq C$ a.e. in $\Omega(t)$ by Lemma \ref{lem:energyEstimateLinfty}, and so for a.a. $t$, the limit satisfies $|\rho(t)| \leq C$ a.e. in $\Omega(t)$ too. By Theorem \ref{thm:DCT}, $u_{\epsilon_{j_k}} \to \rho$ in $L^p_{L^q}$ for all $p$, $q \in [1,\infty)$. Since $u_{\epsilon_{j_k}} \weaklyto u$ (subsequences have the same weak limit), it must be the case that $\rho = u$.% by the coincidence of the weak limit and the strong limit.

% OLD
%So we know that $u_{\epsilon'} \to \rho$ in $L^1_{L^1}$, whence for a.a. $t$, $u_{\epsilon''}(t) \to \rho(t)$ a.e. in $\Omega(t)$. Since $u_\epsilon \weaklyto u$ in $L^2_{L^2}$, we also have $u_{\epsilon''} \weaklyto u$ too. We want to show that $\rho=u.$

%We have $$\int_0^T \int_{\Omega(t)}u_{\epsilon''}(x,t)\eta(x,t) \to \int_0^T \int_{\Omega(t)}u(x,t)\eta(x,t)$$
%by weak convergence for every $\eta \in L^2_{L^2}$.

%Since $u_{\epsilon''}$ is bounded in $L^{\infty}_{L^\infty}$ uniformly, we have for a.a. $t$, $|u_{\epsilon''}(t)\eta(t)| \leq C|\eta(t)|$, and the right hand side is in $L^2_{L^2}$, and a.e. $t$, $u_{\epsilon''}(t) \to \rho(t)$ a.e. in $\Omega(t)$, so by DCT (Theorem \ref{thm:DCT}), the weak convergence (take the inner product in $L^2_{L^2}$ with $1 \in L^2_{L^2}$) $$\int_0^T \int_{\Omega(t)}u_{\epsilon''}(x,t)\eta(x,t) \to \int_0^T \int_{\Omega(t)}\rho(x,t)\eta(x,t)$$
%By uniqueness of weak limits, we must have $\rho = u$.
%So, we have in particular the convergences for a subsequence which we have relabelled:
%\begin{equation}
%\begin{aligned}
%u_\epsilon &\weaklyto u &&\text{in $L^2_{H^1}$}\\
%\mathcal E_{\epsilon}(u_{\epsilon}) &\weaklyto \chi &&\text{in $L^2_{L^2}$}\\
%\text{a.a. $t \in [0,T]$}, u_\epsilon(t) &\to u(t) &&\text{a.e. in $\Omega(t).$}
%\end{aligned}
%\end{equation}
%Let us now conclude the existence.
\begin{proof}[Proof of Theorem \ref{thm:existenceLinfty}]
%Testing \eqref{eq:2} with $\eta \in L^2_{H^1}$ such that $eta(T)  0$, integrating by parts and passing to the limit yields
%
%Passing to the limit in the weak form of \eqref{eq:2} using \eqref{eq:listOfConvergences} gives for all $\eta \in L^2_{H^1}$
%\begin{align*}
%\int_0^T \langle \md(\mathcal{E}_\epsilon(u_\epsilon(t))), \eta(t) \rangle_{\Vmt, \Vt} + \int_0^T \int_{\Omega(t)}\sgrad u_\epsilon(t) \sgrad \eta(t) + \int_0^T \int_{\Omega(t)}\mathcal{E}_\epsilon(u_\epsilon(t))\sgrad \cdot \mathbf w= \int_0^T \int_{\Omega(t)}f(t)\eta(t),
%\end{align*}
%in which we can pass to the limit using \eqref{eq:listOfConvergences}:
%\begin{align*}
%\int_0^T \langle \dot \chi(t), \eta(t) \rangle_{\Vmt, \Vt} + \int_0^T \int_{\Omega(t)}\sgrad u(t) \sgrad \eta(t) + \int_0^T \int_{\Omega(t)}\chi(t)\eta(t) \sgrad \cdot \mathbf w= \int_0^T \int_{\Omega(t)}f(t)\eta(t)
%\end{align*}
%and then we integrate by parts, supposing $\eta \in W(H^1, L^2)$ with $\eta(T)=0$:
%\begin{align*}
%-\int_0^T \int_{\Omega(t)}\dot \eta(t) \chi(t)+ \int_0^T \int_{\Omega(t)}\sgrad u(t) \sgrad \eta(t) = \int_0^T \int_{\Omega(t)}f(t)\eta(t) + \int_{\Omega_0}\chi(0) \eta(0).
%\end{align*}
%On the other hand, 
In \eqref{eq:2}, we can test with a function $\eta \in W(H^1, L^2)$ with $\eta(T) = 0$, integrate by parts and then pass to the limit to obtain
%\begin{align*}
%-\int_0^T \int_{\Omega(t)}\dot \eta(t)\mathcal{E}_\epsilon(u_\epsilon(t)) + \int_0^T \int_{\Omega(t)}\sgrad u_\epsilon(t) \sgrad \eta(t) = \int_0^T \int_{\Omega(t)}f(t)\eta(t) + \int_{\Omega_0}e_{0}\eta(0).
%\end{align*}
%We can pass to the limit in this equation:
\begin{align*}
-\int_0^T \int_{\Omega(t)}\dot \eta(t)\chi(t) + \int_0^T \int_{\Omega(t)}\sgrad u(t) \sgrad \eta(t) = \int_0^T \int_{\Omega(t)}f(t)\eta(t)  + \int_{\Omega_0}e_0\eta(0)
\end{align*}
%{and comparison with the above shows that $\chi(0) = e_0$,}
and it remains to be seen that $\chi \in \mathcal{E}(u)$ or equivalently $u = \mathcal{U}(\chi).$ By monotonicity of $\mathcal{E}_\epsilon$, we have for any $w \in L^2_{L^2}$
\begin{equation*}%\label{eq:monotonicityFormula}
\int_0^T\int_{\Omega(t)} (\mathcal{E}_\epsilon(u_\epsilon) - w)(u_\epsilon - \mathcal{U}_\epsilon(w)) \geq 0.
\end{equation*}
Because $\mathcal{U}_\epsilon \to \mathcal{U}$ uniformly, for a.a. $t$, $\mathcal{U}_\epsilon(w(t)) \to \mathcal{U}(w(t))$ a.e. in $\Omega(t)$, and $|\mathcal{U}_\epsilon (w)| \leq |w|$, and the dominated convergence theorem shows that $\mathcal{U}_\epsilon(w) \to \mathcal{U}(w)$ in $L^2_{L^2}$. Using this and \eqref{eq:listOfConvergences}, we can easily pass to the limit in this inequality and obtain
%To pass to the limit in this inequality, let us expand the brackets and consider each term in turn. We see that
%\begin{align*}
%\int_0^T\int_{\Omega(t)} w\mathcal{U}_\epsilon(w) &\to %\int_0^T\int_{\Omega(t)} w\mathcal{U}(w)\\
%\intertext{ We also trivially have}
%\int_0^T\int_{\Omega(t)} wu_\epsilon &\to \int_0^T\int_{\Omega(t)} %wu.
%\end{align*}
%As above, we have $\mathcal U_\epsilon(w) \to \mathcal U(w)$ in $L^2_{L^2}$, and because the inner product of a weakly convergent sequence with a strongly convergent sequence converges strongly, we have
%\[\int_0^T\int_{\Omega(t)}\mathcal{E}_\epsilon(u_\epsilon)\mathcal{U}_\epsilon( w) \to \int_0^T\int_{\Omega(t)}\chi \mathcal{U}(w).\]
%The same argument shows that
%\[\int_0^T\int_{\Omega(t)} \mathcal{E}_\epsilon(u_\epsilon)u_\epsilon \to \int_0^T\int_{\Omega(t)} \chi u.\]
%In other words, passing to the limit in \eqref{eq:monotonicityFormula}, we obtain the inequality
\[\int_0^T\int_{\Omega(t)}(\chi - w)(u- \mathcal{U}w) \geq 0 \quad \text{for all $w \in L^2_{L^2}$}.\]
%With Lemma \ref{lem:hemicontinuity} and 
By Minty's trick we find $u=\mathcal{U}(\chi)$. To see why $\chi \in L^\infty_{L^\infty}$, we have from the estimate in Lemma \ref{lem:energyEstimateLinfty} that 
for a.a. $t \in [0,T]$, $\norm{\mathcal{E}_\epsilon(\tilde u_\epsilon(t))}{L^\infty(\Omega_0)} \leq C,$ giving $\mathcal{E}_\epsilon(\tilde u_\epsilon(t)) \stackrel{*}\weaklyto \tilde \zeta(t)$ in $L^\infty(\Omega(t))$ and (by weak-* lower semicontinuity) $\lVert {\tilde \zeta(t)}\rVert_{L^\infty(\Omega(t))} \leq C$ for a.a. $t$, and we just need to identify $\tilde \zeta \in \mathcal{E}(\tilde u)$. It follows from \eqref{eq:listOfConvergences} that $\mathcal E_\epsilon(u_\epsilon) \to \chi$ in $L^2_{H^{-1}}$ by Lions--Aubin, and so for a.e. $t$ and for a subsequence (not relabelled), $\mathcal{E}_\epsilon(u_\epsilon(t)) \to \chi(t)$ in $H^{-1}(\Omega(t))$. This allows us to conclude that $\chi = \zeta$ (the weak-* convergence of $\mathcal{E}_\epsilon(\tilde u_\epsilon(t))$ to $\tilde \zeta(t)$ also gives weak convergence in any $L^p(\Omega(t))$ to the same limit).
\end{proof}
%\begin{remark}
%The result of Lemma \ref{lem:hemicontinuity} is a definition of hemicontinuity of $\mathcal U$; see \cite[p.~472]{zeidleriib}.
%\end{remark}
\subsection{Continuous dependence and uniqueness of bounded weak solutions}
{The next lemma allows us to drop the requirement for our test functions to vanish at time $T$.}
\begin{lem}\label{lem:dropT}%If $(u,e) \in L^1_{L^1}\times L^1_{L^1}$ satisfies
%\begin{equation}\label{eq:weakFormCtsDep}
%\begin{aligned}
%&\text{for all $\eta \in W^1(H^1, L^2)$ with $\eta(T) = 0$:}\\
%&-\int_0^T \int_{\Omega(t)}\dot \eta(t)e(t)+ \int_0^T \int_{\Omega(t)}\sgrad u \sgrad \eta(t) = \int_0^T \int_{\Omega(t)}f(t)\eta(t) + (e_0,\eta(0))_{L^2(\Omega_0)},
%\end{aligned}
%\end{equation}
If $(u,e)$ is a bounded weak solution (satisfying \eqref{eq:pdeWeakForm}), then $(u,e)$ also satisfies
\begin{equation*}
\begin{aligned}
%\text{for all $\eta \in W^1(H^1, L^2)$:}\qquad 
\int_{\Omega(T)}e(T)\eta(T) -\int_0^T \int_{\Omega(t)}\dot \eta(t)e(t)+ \int_0^T \int_{\Omega(t)}\sgrad u(t) \sgrad \eta(t)  = \int_0^T \int_{\Omega(t)}f(t)\eta(t) + \int_{\Omega_0}e_0\eta(0)
\end{aligned}
\end{equation*}
for all $\eta \in W(H^1, L^2)$.
\end{lem}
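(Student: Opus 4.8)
The plan is to upgrade the test function class by a density/approximation argument. The weak formulation \eqref{eq:pdeWeakForm} is known to hold for all $\eta \in W(H^1, L^2)$ with $\eta(T) = 0$; the claim asserts that for general $\eta \in W(H^1, L^2)$ (no vanishing condition at $T$), the same identity holds with an extra boundary term $\int_{\Omega(T)}e(T)\eta(T)$. Note that this presupposes $e$ has a well-defined trace at $T$ in $L^2(\Omega(T))$: since $(u,e)$ is a bounded weak solution, $e \in L^\infty_{L^\infty}$, and one reads off from the weak formulation that $e$ has a weak material derivative $\dot e \in L^2_{H^{-1}}$ (because $\slap u \in L^2_{H^{-1}}$ as $u \in L^2_{H^1}$, and the other terms are in $L^2_{H^{-1}}$ too), so $e \in W(L^2, H^{-1})$ and hence $e \in C^0_{L^2}$ by the transport/embedding theory recalled in \S\ref{sec:prelims}; in particular $e(T) \in L^2(\Omega(T))$ makes sense.

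The key steps, in order, are as follows. First, fix an arbitrary $\eta \in W(H^1, L^2)$ and write $\eta = \eta_0 + \eta_T$ where $\eta_T$ is a ``correction'' that carries the value at $T$ and $\eta_0 := \eta - \eta_T$ vanishes at $T$. A convenient concrete choice: pull back to the reference domain, set $\tilde\eta_T(t) := \tfrac{t}{T}\,\tilde\eta(T)$ — noting $\tilde\eta(T) \in H^1(\Omega_0)$ since $\mathcal W(H^1(\Omega_0),L^2(\Omega_0)) \hookrightarrow C^0([0,T];\,[H^1,L^2]_{1/2})$, so at minimum $\tilde\eta(T) \in L^2(\Omega_0)$; if only $L^2$-regularity of the trace is available one instead approximates and takes limits — and push forward to obtain $\eta_T \in W(H^1,L^2)$ with $\eta_T(0) = 0$ and $\eta_T(T) = \eta(T)$. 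Then $\eta_0 \in W(H^1,L^2)$ with $\eta_0(T) = 0$, so \eqref{eq:pdeWeakForm} applies to $\eta_0$ verbatim. Second, handle $\eta_T$ directly: apply the integration-by-parts (transport) formula for the pairing $\tfrac{d}{dt}\int_{\Omega(t)} e(t)\eta_T(t)$ recalled in \S\ref{sec:prelims}, valid because $e \in W(L^2, H^{-1})$ and $\eta_T \in W(H^1, L^2) \subset W(H^1, H^{-1})$. Integrating from $0$ to $T$ and using $\eta_T(0) = 0$ gives
\begin{align*}
\int_{\Omega(T)}e(T)\eta_T(T) &= \int_0^T \langle \dot e(t), \eta_T(t)\rangle + \int_0^T \int_{\Omega(t)} e(t)\dot\eta_T(t) + \int_0^T\int_{\Omega(t)} e(t)\eta_T(t)\sgrad\cdot\mathbf w,
\end{align*}
and then substitute the PDE $\langle \dot e(t),\eta_T(t)\rangle = \int_{\Omega(t)} f(t)\eta_T(t) - \int_{\Omega(t)}\sgrad u(t)\sgrad\eta_T(t) - \int_{\Omega(t)} e(t)\eta_T(t)\sgrad\cdot\mathbf w$, which is precisely the statement that $(u,e)$ solves \eqref{eq:pde} in $L^2_{H^{-1}}$ — this follows from \eqref{eq:pdeWeakForm} tested against $\mathcal D_{H^1}$. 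The two $\sgrad\cdot\mathbf w$ terms cancel, leaving exactly the claimed identity specialised to $\eta_T$. Third, add the identity for $\eta_0$ and the identity for $\eta_T$; by linearity every term recombines to $\eta = \eta_0 + \eta_T$, and since $\eta_0(T) = 0$ the boundary term is $\int_{\Omega(T)} e(T)\eta_T(T) = \int_{\Omega(T)} e(T)\eta(T)$, which is the desired conclusion.

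The main obstacle is the first, bookkeeping-heavy step: one must be sure that $e$ genuinely has a weak material derivative in $L^2_{H^{-1}}$ so that $e \in W(L^2,H^{-1}) \hookrightarrow C^0_{L^2}$ and the transport integration-by-parts formula is legitimately available — this is what gives meaning to $e(T)$ and makes the argument non-circular. Establishing $\dot e \in L^2_{H^{-1}}$ requires going back to \eqref{eq:pdeWeakForm}, restricting test functions to $\mathcal D_{H^1}$, and reading off that $\dot e = f + \slap u - e\sgrad\cdot\mathbf w$ in the distributional sense, with each right-hand term in $L^2_{H^{-1}}$ (here $\slap u \in L^2_{H^{-1}}$ because $u \in L^2_{H^1}$ and $\slap$ maps $H^1 \to H^{-1}$ boundedly, and $e\sgrad\cdot\mathbf w \in L^2_{L^2} \subset L^2_{H^{-1}}$ using $e \in L^\infty_{L^\infty}$ and the bound on $\sgrad\cdot\mathbf w$). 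Once this regularity is in hand, the remaining steps are routine applications of the transport formula and linearity, with no further subtlety beyond keeping track of signs in the $\sgrad\cdot\mathbf w$ terms.
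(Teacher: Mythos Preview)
Your approach is workable in spirit but takes a genuinely different route from the paper, and as written it leans on two facts the paper does not supply. The paper's proof is a one-line time-cutoff argument: for $\eta \in W(H^1,L^2)$ it tests \eqref{eq:pdeWeakForm} with $\chi_{\epsilon,T}\eta$, where $\chi_{\epsilon,T}(t) = \min(1,\epsilon^{-1}(T-t)^+)$ vanishes at $T$, expands $\dot{(\chi_{\epsilon,T}\eta)} = \chi_{\epsilon,T}'\eta + \chi_{\epsilon,T}\dot\eta$, and sends $\epsilon \to 0$; the term $-\int_0^T\!\int_{\Omega(t)}\chi_{\epsilon,T}'\eta\,e = \epsilon^{-1}\int_{T-\epsilon}^T\!\int_{\Omega(t)}e\eta$ produces the boundary contribution $\int_{\Omega(T)}e(T)\eta(T)$ via the Lebesgue differentiation theorem, while the remaining terms converge by dominated convergence. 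No splitting of $\eta$ and no direct appeal to regularity of $e$ is needed.

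Your argument instead first extracts $\dot e \in L^2_{H^{-1}}$ from the equation and then integrates by parts. Two points deserve care. First, the embedding you invoke, ``$e \in W(L^2,H^{-1})$ hence $e \in C^0_{L^2}$'', is not correct as stated: $L^2$ in space with $H^{-1}$-valued derivative only gives $C^0_{H^{-1/2}}$ by interpolation. What rescues you is the extra information $e \in L^\infty_{L^\infty} \subset L^\infty_{L^2}$, which together with continuity into $H^{-1/2}$ yields weak continuity into $L^2$; that is enough to make sense of $\int_{\Omega(T)}e(T)\eta(T)$, but you should say so. Second, the transport formula recalled in \S\ref{sec:prelims} is stated only for pairs in $W(H^1,H^{-1})$, whereas you apply it to $e$ which is not in $L^2_{H^1}$; the asymmetric version you need (one factor in $L^2_{L^2}$ with $H^{-1}$ derivative, the other in $L^2_{H^1}$ with $L^2$ derivative) is true but is an additional lemma not proved in the paper. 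The cutoff route buys you exactly the avoidance of these two extensions; your route buys a more structural explanation of where the boundary term comes from, at the cost of that extra groundwork.
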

\begin{proof}
To see this, for $s \in (0,T]$, consider the function $\chi_{\epsilon,s}(t) = \min\left(1, \epsilon^{-1}(s-t)^+\right)$ which has a weak derivative $\chi_{\epsilon,s}'(t) = -\epsilon^{-1}\chi_{(s-\epsilon, s)}(t)$.
%\begin{equation*}
%\chi_{\epsilon,s}'(t) = \begin{cases}
%0 &: t \in (0,s-\epsilon) \\
%-\frac{1}{\epsilon} &: t \in (s-\epsilon, s)\\
%0 &: t \in (s,T) .
%\end{cases}.
%\end{equation*}
Take the test function in \eqref{eq:pdeWeakForm} to be $\chi_{\epsilon,T}\eta$ where $\eta \in W(H^1, L^2)$,
%\[-\int_0^T \int_{\Omega(t)}(\dot \chi_{\epsilon, T}(t)\eta(t) + \chi_{\epsilon, T}(t)\dot \eta(t)) e(t)+ \int_0^T \int_{\Omega(t)}\chi_{\epsilon, T}(t)\sgrad u \sgrad\eta(t) = \int_0^T\int_{\Omega(t)}\chi_{\epsilon, T}(t) f(t)\eta(t) + \int_{\Omega_0}\chi_{\epsilon, T}(0) e_0\eta(0)\]
%and this becomes
%\[\frac{1}{\epsilon}\int_{T-\epsilon}^T\int_{\Omega(t)} \eta (t)e(t)-\int_0^T\int_{\Omega(t)} \chi_{\epsilon, T}(t)\dot \eta (t) e(t)= \int_0^T\int_{\Omega(t)}\chi_{\epsilon, T}(t) \left( f(t)\eta (t)-\sgrad u(t) \sgrad \eta (t) \right) + \int_{\Omega_0}\chi_{\epsilon, T}(0) e_0\eta(0).\]
send $\epsilon \to 0$ and use the Lebesgue differentiation theorem. %on the left hand side 
%to find the desired equality.
%\[\int_{\Omega(T)}\varphi(T)e(T)-\int_0^T \int_{\Omega(t)}\dot \varphi(t)e(t)= \int_0^T\int_{\Omega(t)} \left(-\sgrad u(t)\sgrad \varphi(t) + \int_{\Omega(t)}f(t)\varphi(t)\right) + \int_{\Omega_0}e_0\eta(0).\]
\end{proof}
{We can finally prove Theorem \ref{thm:uniquenessAndCtsDependenceLinfty}. 
\begin{proof}[Proof of Theorem \ref{thm:uniquenessAndCtsDependenceLinfty}]We can prove the continuous dependence like in \cite[Chapter V, \S 9]{lady}.
%Recall that we want to show the continuous dependence result. %First, we prove that
%\begin{align*}
%\norm{e_1(t)-e_2(t)}{L^1(\Omega(t))} &\leq \int_0^t \norm{f_1(\tau)-f_2(\tau)}{L^1(\Omega(\tau))} + \norm{e_0^1-e_0^2}{L^1(\Omega_0)}
%\end{align*}
%holds.
 As explained in Lemma \ref{lem:dropT}, we drop the requirement $\eta(T) = 0$ in our test functions and we now suppose that $\slap \eta \in L^2_{L^2}$. Suppose for $i=1, 2$ that $(u_i, e_i)$ is the solution to the Stefan problem with data $(f_i, u_0^i)$, so
%\[\int_{\Omega(t)}e_i(t)\eta(t)-\int_0^t \langle \dot \eta(\tau), e_i(\tau)\rangle  + \int_0^t \int_{\Omega(\tau)}\sgrad u_i(\tau) \sgrad \eta(\tau) = \int_0^t \langle f_i(\tau), \eta(\tau) \rangle + (e_0^i, \eta(0))_{L^2(\Omega_0)}\]
%Taking the difference and if we assume $\eta(t) \in L^2(\Omega(t))$, we can integrate by parts:
\begin{align}
\nonumber \int_{\Omega(t)}(e_i(t)-e_2(t))\eta(t)-&\int_0^t \int_{\Omega(\tau)}\dot \eta(\tau)(e_1(\tau)-e_2(\tau))  - \int_0^t \int_{\Omega(\tau)}(u_1(\tau)-u_2(\tau)) \slap \eta(\tau)\\
&= \int_0^t \int_{\Omega(\tau)}(f_1(\tau)-f_2(\tau))\eta(\tau) + \int_{\Omega_0}(e_0^1-e_0^2)\eta(0).\label{eq:9}
\end{align}
Define $a={(u_1-u_2)}/({e_1-e_2})$ when $e_1 \neq e_2$ and $a=0$ otherwise, 
%\[a = \begin{cases}
%\frac{u_1-u_2}{e_1-e_2} &: e_1 \neq e_2\\
%0 &: e_1 = e_2
%\end{cases}
%\]
and note that $0 \leq a(x,t) \leq 1$. %The above becomes
%\begin{equation}
%\int_{\Omega(t)}(e_i(t)-e_2(t))\eta(t)-\int_0^t \int_{\Omega(\tau)}(e_1(\tau)-e_2(\tau))(\dot %\eta(\tau)  + a(x,\tau) \slap \eta(\tau)) = \int_0^t (f_1(\tau)-f_2(\tau))\eta(\tau) + \int_{\Omega_0}(e_0^1-e_0^2)\eta(0).
%\end{equation}
Let $\eta_\epsilon$ solve in $\cup_{\tau \in (0,t)}\{\tau\}\times \Omega(\tau)$ the equation
\begin{equation}\label{eq:12}
\begin{aligned}
\md_{\tau} \eta_\epsilon(\tau) + (a_\epsilon(x,\tau)+\epsilon)\slap \eta_\epsilon(\tau) &= 0&&\\
\eta_\epsilon(t) &= \xi &&\text{on $\Omega_0$}
\end{aligned}
\end{equation}
with $\xi \in C^1(\Omega_0)$ and where $a_\epsilon$ satisfies $\phi_{-(\cdot)}a_\epsilon \in C^2([0,T]\times\Omega_0)$ and $0 \leq a_\epsilon \leq 1$ a.e. and $\norm{a_\epsilon - a}{L^2(Q)} \leq \epsilon$. This is well-posed by Lemma \ref{lem:auxiliaryPDE}. Equation \eqref{eq:9} can be written in terms of $a_\epsilon$, and if we choose $\eta = \eta_\epsilon$ and use \eqref{eq:12}, we find
%\begin{align*}
%\int_{\Omega(t)}&(e_1(t)- e_2(t))\eta(t)-\int_0^t \int_{\Omega(\tau)}(e_1(\tau)-e_2(\tau))(\dot \eta(\tau)  + (a_\epsilon(x,\tau) +\epsilon)\slap \eta(\tau)-(a_\epsilon(x,\tau) - a(x,\tau) +\epsilon)\slap \eta(\tau))\\ 
%&= \int_0^t \int_{\Omega(\tau)} (f_1(\tau)-f_2(\tau)) \eta(\tau) \rangle + \int_{\Omega_0}(e_0^1-e_0^2)\eta(0)
%\end{align*}
%and if we choose $\eta = \eta_\epsilon$, using \eqref{eq:12},
%\begin{align*}
%\int_{\Omega(t)}(e_1(t)- e_2(t))\xi -\int_0^t \int_{\Omega(\tau)}(e_1(\tau)-e_2(\tau))( a(x,\tau) -a_\epsilon(x,\tau) -\epsilon)\slap \eta_\epsilon(\tau) &= \int_0^t \int_{\Omega(\tau)}(f_1(\tau)-f_2(\tau))\eta_\epsilon(\tau)\\
%&\quad + \int_{\Omega_0}(e_0^1-e_0^2)\eta_\epsilon(0).
%\end{align*}
%From this we obtain
\begin{align}
\nonumber \int_{\Omega(t)}(e_1(t)- e_2(t))\xi %&\leq \int_0^t \int_{\Omega(\tau)}|e_1(\tau)-e_2(\tau)|(|a(x,\tau) -a_\epsilon(x,\tau)| + \epsilon)|\slap \eta_\epsilon(\tau)| + \int_0^t \norm{f_1(\tau)-f_2(\tau)}{L^1(\Omega(\tau))}\norm{\eta_\epsilon(\tau)}{L^\infty(\Omega(\tau))}\\
&\leq \norm{e_1-e_2}{L^\infty_{L^\infty}}\int_0^t \int_{\Omega(\tau)}(|a(x,\tau) -a_\epsilon(x,\tau)| + \epsilon)|\slap \eta_\epsilon(\tau)| \\
&\quad+ \norm{\xi}{L^\infty(\Omega_0)}\int_0^t \norm{f_1(\tau)-f_2(\tau)}{L^1(\Omega(\tau))}+ \norm{\xi}{L^\infty(\Omega_0)}\int_{\Omega_0}|e_0^1-e_0^2|\label{eq:16}
\end{align}
using the $L^\infty$ bound from Lemma \ref{lem:auxiliaryPDE}. %We now use the results of Lemma \ref{lem:auxiliaryPDE}. 
We can estimate the first integral on the right hand side:
\begin{align*}
\int_0^t \int_{\Omega(\tau)}|a(x,\tau) -a_\epsilon(x,\tau)| |\slap \eta_\epsilon(\tau)| %&\leq \left(\int_0^t \int_{\Omega(\tau)}\frac{|a(x,\tau) -a_\epsilon(x,\tau)|^2}{\epsilon}\right)^{\frac 12}\left(\int_0^t \int_{\Omega(\tau)}\epsilon|\slap \eta_\epsilon(\tau)|^2\right)^{\frac 12}\\
&\leq \sqrt{\epsilon}\norm{a-a_\epsilon}{L^2_{L^2}}\sqrt{(2+\epsilon)(1+{e^{2C_{\mathbf w}(2+\epsilon)t}})}\norm{\sgrad \xi}{L^2(\Omega_0)}
\end{align*}
and
\begin{align*}
\int_0^t \int_{\Omega(\tau)}|\epsilon\slap\eta_\epsilon| %&\leq \left(\int_0^t\int_{\Omega(\tau)}\epsilon\right)^{\frac 12}\left(\int_0^t\int_{\Omega(\tau)}\epsilon(\slap \eta_\epsilon)^2\right)^{\frac 12}\\
&\leq \sqrt{t|\Omega|\epsilon (2+\epsilon)(1+{e^{2C_{\mathbf w}(2+\epsilon)t}})}\left(\int_{\Omega_0}|\sgrad \xi|^2\right)^{\frac 12} %\to 0
\end{align*}
by the results in Lemma \ref{lem:auxiliaryPDE}. Sending $\epsilon \to 0$ in \eqref{eq:16} gives us (recalling $\xi \leq 1$),
\begin{align*}
\int_{\Omega(t)}(e_1(t)- e_2(t))\xi &\leq \int_0^t \norm{f_1(\tau)-f_2(\tau)}{L^1(\Omega(\tau))} + \lVert{e_0^1-e_0^2}\rVert_{L^1(\Omega_0)}.
\end{align*}
Now pick $\xi = \xi_n$ where $\xi_n(x) \to \operatorname{sign}(e_1(t,x) - e_2(t,x)) \in L^2(\Omega(t))$ a.e. in $\Omega(t)$.% (recall that $t$ here is fixed).% to find the claimed result.
%\[\norm{e_1(t)- e_2(t)}{L^1(\Omega(t))} \leq \int_0^t \norm{f_1(\tau)-f_2(\tau)}{L^1(\Omega(\tau))} + \norm{e_0^1-e_0^2}{L^1(\Omega_0)}.\]
\end{proof}
%\begin{remark}
%In the proof above, we could for example pick the regularisation
%\[\xi_n(x) = \frac{e_1(t,x) - e_2(t,x)}{\sqrt{(e_1(t,x)-e_2(t,x))^2 + \frac{1}{n}}}.\]
%Then the integrand is bounded above by $\frac{(e_1(t,x) - e_2(t,x))^2}{\sqrt{(e_1(t,x)-e_2(t,x))^2}} = |e_1(t,x)-e_2(t,x)|$, which is integrable, hence by dominated convergence we have $\int_{\Omega(t)}(e_1(t)-e_2(t))\xi_n \to \int_{\Omega(t)}|e_1(t) - e_2(t)|$ as $n \to \infty$, and we obtain the desired result.
%\end{remark}
\subsection{Well-posedness of weak solutions}
%So far, we have proved well-posedness only for problems for which the initial data and the right hand side data are bounded.
\begin{proof}[Proof of Theorem \ref{thm:wellPosednessL1}]
Suppose $(e_0, f) \in L^1(\Omega_0) \times L^1_{L^1}$ are data and consider functions $e_{0n} \in L^\infty(\Omega_0)$ and $f_n \in L^\infty_{L^\infty}$ satisfying
\begin{equation*}
\begin{aligned}
(f_n, e_{0n}) &\to (f, e_0) &&\text{in $L^1_{L^1}\times L^1(\Omega_0)$}.
%f_n &\to f&&\text{in $L^1_{L^1}$}\\
\end{aligned}
\end{equation*}
The existence of $f_n$ holds because by density, there exist $\tilde f_n \in C^0([0,T]\times \Omega_0)$ such that $\tilde f_n \to \tilde f$ in $L^1((0,T)\times\Omega_0) \equiv L^1(0,T;L^1(\Omega_0))$. Denote by $(u_n, e_n)$ the respective (bounded weak) solutions to the Stefan problem with the data $(e_{0n}, f_{n})$. By virtue of these solutions satisfying
%\[\int_0^T \langle \dot e_n(t),\eta(t) \rangle_{\Vmt, \Vt} + \int_0^T \int_{\Omega(t)}\sgrad u_n(t) \sgrad \eta(t) + \int_0^T \int_{\Omega(t)}e_n(t)\eta(t)\sgrad \cdot \mathbf w = \int_0^T \int_{\Omega(t)}f_n(t)\eta(t)\]
the continuous dependence result,
%\begin{align*}
%\norm{e_n(t)- e_m(t)}{L^1(\Omega(t))} &\leq \int_0^t \norm{f_n(\tau)-f_m(\tau)}{L^1(\Omega(\tau))} + \norm{e_{0n}-e_{0m}}{L^1(\Omega_0)}\\
%&\leq \norm{f_n-f_m}{L^1_{L^1}} + \norm{e_{0n}-e_{0m}}%{L^1(\Omega_0)}
%\end{align*}
%which we can integrate over $[0,T]$:
%\begin{align*}
%\norm{e_n- e_m}{L^1_{L^1}} &\leq T\left(\norm{f_n-f_m}{L^1_{L^1}} + \norm{e_{0n}-e_{0m}}{L^1(\Omega_0)}\right).
%\end{align*}
it follows that $\{e_n\}_{n}$ is a Cauchy sequence in $L^1_{L^1}$ and thus $e_n \to \chi$ in $L^1_{L^1}$ for some $\chi$. Recall that $|u_n| = |\mathcal{U}(e_n)| \leq |e_n|$ %so we can consider the Nemytskii map $N_{\mathcal{U}}\colon L^1_{L^1} \to L^1_{L^1}$ defined by $N_{\mathcal{U}}(g) = \mathcal{U}(g)$. Clearly ${\mathcal{U}}$ is continuous and hence Caretheodory and so $N_{\mathcal{U}}$ is continuous, which means that 
so by consideration of an appropriate Nemytskii map, we find $u_n = \mathcal{U}(e_n) \to \mathcal{U}(\chi)$. Now we can pass to the limit in 
\begin{align*}
-\int_0^T \int_{\Omega(t)}\dot \eta(t)e_n(t) - \int_0^T \int_{\Omega(t)}u_n(t) \slap \eta(t) &= \int_0^T \int_{\Omega(t)}f_n(t)\eta(t) + \int_{\Omega_0}e_{n0} \eta(0)
\end{align*}
and doing so gives
\begin{align*}
-\int_0^T \int_{\Omega(t)}\dot \eta(t)\chi(t) - \int_0^T \int_{\Omega(t)}\mathcal{U}(\chi(t)) \slap \eta(t) &= \int_0^T \int_{\Omega(t)}f(t)\eta(t) + \int_{\Omega_0}e_0\eta(0)
\end{align*}
and overall this shows that there exists a pair $(\chi, \mathcal{E}^{-1}(\chi)) \in L^1_{L^1} \times L^1_{L^1}$ which is a weak solution of the Stefan problem. For these integrals to make sense, we need $\eta \in W^1(L^\infty\cap H^2, L^\infty)$ with $\slap \eta \in L^\infty_{L^\infty}$.
%Recall that $u_n = \mathcal{E}^{-1}(e_n)$ and observe that $|u_n| \leq |e_n|$ and so $u_n$ is bounded in $L^1_{L^1}$. Thus $u_n \weaklyto u$ in $L^1_{L^1}$ for a subsequence (which we have relabelled) for some $u$. We pass to the limit in
%\begin{align*}
%-\int_0^T \int_{\Omega(t)}\dot \eta(t)e_n(t) - \int_0^T \int_{\Omega(t)}u_n(t) \slap \eta(t) &= \int_0^T \int_{\Omega(t)}f_n(t)\eta(t) + (e_{n0}, \eta(0))_{L^2(\Omega_0)}
%\end{align*}
%and doing so gives
%\begin{align*}
%-\int_0^T \int_{\Omega(t)}\dot \eta(t)\chi(t) - \int_0^T \int_{\Omega(t)}u(t) \slap \eta(t) &= \int_0^T \int_{\Omega(t)}f(t)\eta(t) + \int_{\Omega_0}e_0\eta(0).
%\end{align*}
%For this integral to make sense, we need $\eta \in W^1(L^\infty\cap H^2, L^\infty)$ with $\slap \eta \in L^\infty_{L^\infty}$.

%It remains to check that $\chi = e$, the solution of the Stefan problem with data $(u_0, f)$, or that $u = \mathcal{E}^{-1}(\chi)$. This follows because since $e_n \to \chi$ in $L^1_{L^1}$, $e_n(t) \to \chi(t)$ pointwise a.e. in $\Omega(t)$. By continuity, $u_n(t) \to  \mathcal{E}^{-1}(\chi(t))$ pointwise a.e. {This shows that $u=\mathcal{E}^{-1}(\chi)$ because the pointwise limit and the weak limit must coincide. Is there an easier way to do this?} 
Now suppose that $(u^1, e^1)$ and $(u^2, e^2)$ are two weak solutions of class $L^1$ to the Stefan problem with data $(f^1, e^1_0)$ and $(f^2, e^2_0)$ in $L^1_{L^1}\times L^1(\Omega_0)$ respectively. We know that there exist approximations $(f^1_n, e^1_{0n})$, $(f^2_n, e^2_{0n}) \in L^\infty_{L^\infty}\times L^\infty(\Omega_0)$ of the data satisfying
\begin{equation*}
\begin{aligned}
(f^1_n, e^1_{0n}) &\to (f^1, e^1_0) \quad \text{and} \quad (f^2_n, e^2_{0n}) \to (f^2, e^2_0)&&\text{in $L^1_{L^1} \times L^1(\Omega_0)$}.
\end{aligned}
\end{equation*}
These approximate data give rise to the approximate solutions $e^1_n$ and $e^2_n$ both of which are elements of $L^{\infty}_{L^\infty}$. It follows from above that $e^1_n \to e^1$ and $e^2_n \to e^2$ in $L^1_{L^1}$. Now consider the continuous dependence result that $e^1_n$ and $e^2_n$ satisfy:
\begin{equation}\label{eq:16a}
\lVert{e^1_n- e^2_n}\rVert_{L^1_{L^1}} \leq T\left(\lVert{f^1_n-f^2_n}\rVert_{L^1_{L^1}} + \lVert{e^1_{0n}-e^2_{0n}}\rVert_{L^1(\Omega_0)}\right).
\end{equation}
Regarding the right hand side, by writing $e^1_{0n}- e^2_{0n} = e^1_{0n} - e^1_0 + e^1_0 - e^2_0+ e^2_0 - e^2_{0n},$
%\begin{align*}
%\norm{e^1_{0n}- e^2_{0n}}{L^1_{L^1}} &\leq \norm{e^1_{0n} - e^1_0}{L^1_{L^1}}+ \norm{e^1_0 - e^2_0}{L^1_{L^1}} + \norm{e^2_0 - e^2_{0n}}{L^1_{L^1}},
%\end{align*}
(and similarly for the $f^i_n$) and using triangle inequality, along with the fact that $e^1_n-e^2_n \to e^1-e^2$ in $L^1_{L^1}$, we can take the limit in \eqref{eq:16a} as $n \to \infty$ and we are left with what we desired.
%\begin{equation*}
%\norm{e^1- e^2}{L^1_{L^1}} \leq T\left(\norm{f^1-f^2}{L^1_{L^1}} + \norm{e^1_{0}-e^2_{0}}{L^1(\Omega_0)}\right).
%\end{equation*}
\end{proof}

\appendix
\section{Proofs}
\begin{proof}[Proof that $L^p_X$ is a Banach space when $X_0$ is separable]\label{item:A}
It is easy to verify that the expressions in \eqref{eq:ip} define norms if the integrals on the right hand sides are well-defined, which we now check. So let $u \in L^p_X$. Then $\tilde u := \phi_{-(\cdot)}u(\cdot) \in L^p(0,T;X_0)$. Define $F\colon [0,T] \times X_0 \to \mathbb{R}$ by $F(t,x) = \norm{\phi_t x}{X(t)}$. By assumption, $t \mapsto F(t,x)$ is measurable for all $x \in X_0$, and if $x_n \to x$ in $X_0$, then by the reverse triangle inequality,
\begin{align*}
|F(t,x_n) - F(t,x)| \leq \norm{\phi_t(x_n-x)}{X(t)} \leq C_X\norm{x_n -x}{X_0} \to 0,
\end{align*} 
so $x \mapsto F(t,x)$ is continuous. Thus $F$ is a Carath\'eodory function. Due to the condition $|F(t,x)| \leq C_X\norm{x}{X_0}$, by Remark 3.4.5 of \cite{gasinski}, the Nemytskii operator $N_F$ defined by $(N_F x)(t) := F(t,x(t))$ maps  $L^p(0,T;X_0) \to L^p(0,T)$, so that
\[\norm{N_F \tilde u}{L^p(0,T)}^p = \int_0^T \norm{u(t)}{X(t)}^p < \infty.\]
\end{proof}
\begin{proof}[Proof of Lemma \ref{lem:pushforwardIso}]\label{item:B}
First we show that if $u \in L^p(0,T;X_0)$, then $\phi_{(\cdot)} u(\cdot) \in L^p_X$.

Let $u \in L^p(0,T;X_0)$ be arbitrary. By density, there exists a sequence of simple functions $u_n \in L^p(0,T;X_0)$ with 
$$\norm{u_n - u}{L^p(0,T;X_0)} \to 0$$
and thus for almost every $t$,
$$\norm{u_n(t) - u(t)}{X_0} \to 0$$
for a subsequence, which we relabelled. We have that $\phi_t u_n(t) \to \phi_t u(t)$ in $X(t)$ by continuity; this implies
%$$\bigg|\norm{\phi_t u_n(t)}{X(t)} - \norm{\phi_t u(t)}{X(t)}\bigg| \leq\norm{\phi_t u_n(t) - \phi_t u(t)}{X(t)} \to 0$$
%so that 
\begin{equation}\label{eq:pwl}
\norm{\phi_t u_n(t)}{X(t)} \to \norm{\phi_t u(t)}{X(t)}\qquad\text{pointwise a.e.} 
\end{equation}
Write $u_n(t) = \sum_{i=1}^{M_n}u_{n,i}\mathbf{1}_{B_i}(t)$ where the $u_{n,i} \in X_0$ and the $B_i$ are measurable, disjoint and partition $[0,T].$  Then
$$\phi_t u_n(t) = \sum_{i=1}^{M_n}\phi_t(u_{n,i})\mathbf{1}_{B_i}(t) \in X(t).$$
Taking norms and exponentiating, we get
$$\norm{\phi_t u_n(t)}{X(t)}^p = \sum_{i=1}^{M_n}\norm{\phi_tu_{n,i}}{X(t)}^p\mathbf{1}_{B_i}^p(t),$$
which is measurable (with respect to $t$) since, by assumption, the $\norm{\phi_tu_{n,i}}{X(t)}$ are continuous and a finite sum of measurable functions is measurable. Thus, by \eqref{eq:pwl}, $\norm{\phi_t u(t)}{X(t)}$, is measurable. Finally, 
$$\int_0^T \norm{\phi_t u(t)}{X(t)}^p \leq \int_0^T C_X^p \norm{ u(t)}{X_0}^p = C_X^p\norm{u}{L^p(0,T;X_0)}^p,$$
so $\phi_{(\cdot)} u(\cdot) \in L^p_X$.

So there is a map from $L^p(0,T;X_0)$ to $L^p_X$ and vice-versa from the definition of $L^p_X$. The isomorphism between the spaces is $T\colon L^p(0,T;X_0) \to L^p_X$ where 
\begin{align*}
Tu = \phi_{(\cdot)}u(\cdot),\quad\text{and}\quad
T^{-1}v = \phi_{-(\cdot)}v(\cdot).
\end{align*}
It is easy to check that $T$ is linear and bijective. The equivalence of norms follows by the bounds on $\phi_{-t}\colon X(t) \to X_0$ 
\[\frac{1}{C_X}\norm{u(t)}{X(t)} \leq \norm{\phi_{-t}u(t)}{X_0} \leq C_X\norm{u(t)}{X(t)}.\]
\end{proof}
\section*{Acknowledgments}
A.A. was supported by the Engineering and Physical Sciences Research Council (EPSRC) Grant EP/H023364/1 within the MASDOC Centre for Doctoral Training. This work was initiated at the Isaac Newton Institute in Cambridge during the \emph{Free Boundary Problems and Related Topics} programme (January -- July 2014). {The authors are grateful to the referees for their useful feedback and encouragement.}

%%%%%%%%%% Insert bibliography here %%%%%%%%%%%%%%

\end{document}